\title[Thurston compactifications of stability manifolds of curves]
{Thurston compactifications of spaces of stability conditions on curves}
\author{Kohei Kikuta, Naoki Koseki, and Genki Ouchi}
\date{}
\address{Department of Mathematics, Graduate School of Science, Osaka University, Toyonaka Osaka, 560-0043, Japan.}
\email{kikuta@math.sci.osaka-u.ac.jp}
\address{The University of Liverpool, Mathematical Sciences Building, Liverpool, L69 7ZL, UK.}
\email{koseki@liverpool.ac.uk}
\address{Graduate School of Mathematics, Nagoya University, Furocho, Chikusaku, Nagoya, 464-8602, Japan.}
\email{genki.ouchi@math.nagoya-u.ac.jp}
\address{}
\email{}
\theoremstyle{plain}
\newtheorem{thm}{Theorem}[section]
\newtheorem{prop}[thm]{Proposition}
\newtheorem{def-prop}[thm]{Definition-Proposition}
\newtheorem{lem}[thm]{Lemma}
\newtheorem{cor}[thm]{Corollary}
\newtheorem*{thm*}{Theorem}
\theoremstyle{definition}
\newtheorem{defin}[thm]{Definition}
\newtheorem*{NaC}{Notation and Convention}
\newtheorem*{ACK}{Acknowledgement}
\theoremstyle{remark}
\newtheorem{rmk}[thm]{Remark}
\newtheorem{ques}[thm]{Question}
\newtheorem{ex}[thm]{Example}
\newtheorem{assump}[thm]{Assumption}
\DeclareMathOperator{\ch}{ch}
\DeclareMathOperator{\rk}{rk}
\newcommand{\dR}{\mathbf{R}}
\newcommand{\bP}{\mathbb{P}}
\newcommand{\bC}{\mathbb{C}}
\newcommand{\bR}{\mathbb{R}}
\newcommand{\bZ}{\mathbb{Z}}
\newcommand{\bH}{\mathbb{H}}
\newcommand{\mcA}{\mathcal{A}}
\newcommand{\mcD}{\mathcal{D}}
\newcommand{\mcH}{\mathcal{H}}
\newcommand{\mcI}{\mathcal{I}}
\newcommand{\mcO}{\mathcal{O}}
\newcommand{\mcP}{\mathcal{P}}
\newcommand{\mcS}{\mathcal{S}}
\newcommand{\mcT}{\mathcal{T}}
\newcommand{\mcU}{\mathcal{U}}
\DeclareMathOperator{\Hom}{Hom}
\DeclareMathOperator{\Coh}{Coh}
\DeclareMathOperator{\Pic}{Pic}
\DeclareMathOperator{\ext}{ext}
\DeclareMathOperator{\Ext}{Ext}
\DeclareMathOperator{\Stab}{Stab}
\DeclareMathOperator{\Geo}{Geo}
\DeclareMathOperator{\cl}{cl}
\DeclareMathOperator{\SL}{SL}
\DeclareMathOperator{\ST}{ST}
\DeclareMathOperator{\Ob}{Ob}
\DeclareMathOperator{\Sph}{Sph}
\DeclareMathOperator{\Aut}{Aut}
\DeclareMathOperator{\RHom}{RHom}
\DeclareMathOperator{\MCG}{MCG}
\DeclareMathOperator{\Fuk}{Fuk}
\newcommand{\iso}{\xrightarrow{\sim}}
\begin{document}
\maketitle

\begin{abstract}
In this paper, we construct a compactification of
the space of Bridgeland stability conditions on a smooth projective curve, as an analogue of Thurston compactifications in Teichm{\"u}ller theory. 

In the case of elliptic curves, 
we compare our results with the classical one of the torus via homological mirror symmetry and give the Nielsen--Thurston classification of autoequivalences using the compactification. 

Furthermore, we observe an interesting phenomenon 
in the case of the projective line. 
\end{abstract}

\setcounter{tocdepth}{1}
\tableofcontents


%
    
%
%


\section{Introduction}
\subsection{Backgrounds}
\subsubsection{Teichm\"{u}ller theory}
Teichm\"{u}ller space $\mcT(\Sigma_g)$ of a closed oriented surface $\Sigma_g$ (of genus $g$) is a fundamental object in the study of moduli spaces of surfaces and mapping class groups. 
Construction of its compactification 
has been a central topic in the field, 
and plays a key role 
in the study of asymptotic properties of hyperbolic and complex geometry, mapping class groups, 
quasi-Fuchsian groups, and so on. 
There are several types of compactifications 
such as the Thurston compactification, the Gardier--Masur compactification, and the Bers compactification. 

\subsubsection{Bridgeland stability conditions}
Bridgeland introduced the notion of stability conditions on triangulated categories as 
a mathematical understanding of 
Douglas' $\Pi$-stability for D-branes in string theory (\cite{bri}). 
In his expository article (\cite{Bri-Teich}) in 2006, 
he pointed out that the space of ``framed" $N=2$ superconformal field theories (called Teichm\"uller space) is related to the space of stability conditions, 
and that the moduli space of $N=2$ superconformal field theories is related to the quotient of the space of stability conditions by the autoequivalence group. 
In another context, motivated by the work of 
Gaiotto--Moore--Neitzke (\cite{GMN}) announced in 2009, Bridgeland--Smith (\cite{BS}) gave a mathematically rigorous proof of the identification between the moduli spaces of meromorphic quadratic differentials on Riemann surfaces 
and the spaces of stability conditions on some CY3 triangulated categories associated with surfaces in 2015. 
These works suggest that there is 
an analogy between Teichm\"uller spaces $\mcT(\Sigma_g)$ (resp. mapping class groups) and the spaces $\Stab(\mcD)$ of stability conditions (resp. autoequivalence groups). 

In this paper, we investigate a compactification 
of the space of Bridgeland stability conditions 
as an analogue of the Thurston compactification 
in Teichm\"uller theory, following Bapat--Deopurkar--Licata (\cite{bdl20}). 

Let us first recall the construction of the classical Thurston compactification. 
It is given by taking the closure of the following embedding associated to the length $l_t(\gamma)$ of a closed geodesic free homotopic to a simple closed curve $\gamma\subset\Sigma_g$: 
\[
\bP l: \mcT(\Sigma_g) \to \bP^\mcS_{\geq 0};~t \mapsto \left[l_t(\gamma)\right]_{\gamma\in\mcS}, 
\]
where $\bP^\mcS_{\geq 0}$ is an infinite dimensional projective space whose homogenous coordinate is given by  $\gamma\in\mcS$. 





Under the above analogy between the Teichm\"uller space and the space of stability conditions, 
the length of a closed geodesic corresponds to the notion of \emph{mass} $m_\sigma(E)\in\bR_{>0}$ of an object $E\in\mcD$ with respect to a stability condition $\sigma\in\Stab(\mcD)$. 
Therefore it is natural to consider 
the following map: 
\[
\bP m: \Stab(\mcD)/\bC \to \bP^\mcS_{\geq 0};~\sigma \mapsto \left[m_\sigma(E)\right]_{E\in\mcS}. 
\]
When the map $\bP m$ is homeomorphic onto the image and the closure is compact, we call it the \emph{Thurston compactification} of $\Stab(\mcD)/\bC$ and denote it by $\overline{\Stab(\mcD)/\bC}$. 
In \cite{bdl20}, Bapat--Deopurkar--Licata constructed Thurston compactifications in this way 
in the case of CY2 categories associated with finite connected quivers, see also \cite{BBL}. 



\subsection{Main results}

The goal of this paper is to study the Thurston compactification of the space of stability conditions in an algebro-geometric setting, 
namely, for the derived category of coherent sheaves on a smooth projective curve. 


Let $C$ be a smooth projective complex curve, $D^b(C)$ the bounded derived category of coherent sheaves on $C$, and $\Stab(C)$ the space of stability conditions on $D^b(C)$. 
A stability condition on $D^b(C)$ 
is called \emph{geometric} 
if all the structure sheaves of points are stable of the same phase. 
We denote the set of geometric stability conditions 
by $\Geo(C)$. 
The first result is the construction of a compactification of $\Geo(C)/\bC$: 
\begin{thm}[Proposition \ref{prop:Geo-inj}, Lemma \ref{lem:defbarm} and Theorem \ref{thm:closure}]
\label{thm:intromain}
The continuous map
\[
\bP m \colon \Geo(C)/\bC \to \bP^\mcS_{\geq 0} 
\]
is homeomorphic onto the image 
and its closure 
$
\overline{\Geo(C)/\bC}
$ 
is homeomorphic to the closed disk. 
In particular, it is compact.  
\end{thm}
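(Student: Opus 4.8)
The plan is to make $\bP m$ completely explicit on $\Geo(C)/\bC$, extend it continuously to a natural compactification of the source that is visibly a closed disk, and then invoke compactness. First I would use the standard description of geometric stability conditions on a curve: modulo the $\bC$-action, every $\sigma\in\Geo(C)$ is represented by a stability condition $\sigma_{b,w}$ whose central charge is $Z_{b,w}(E)=-\deg E+(b+iw)\,\rk E$ with $w>0$ and heart a tilt of $\Coh(C)$. This identifies $\Geo(C)/\bC$ with the upper half plane $\bH=\{(b,w):w>0\}$, an open disk. Since the $\bC$-action rescales all masses by a common positive factor and leaves the set of semistable objects unchanged, $\bP m$ descends to the quotient, and on the slice $\{\sigma_{b,w}\}$ the mass of a $\sigma_{b,w}$-semistable object of class $(\rk,\deg)=(r,d)$ is $|Z_{b,w}|=\sqrt{(br-d)^2+w^2r^2}$.

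For injectivity and the homeomorphism onto the image I would single out the skyscraper sheaves $\mcO_x$, of class $(0,1)$ and mass identically $1$, and the line bundles of degree $n$, of class $(1,n)$ and mass $\sqrt{(b-n)^2+w^2}$; all of these are stable for every $\sigma_{b,w}$, so no Harder--Narasimhan correction intervenes. Normalising the projective coordinates by the everywhere-positive skyscraper coordinate turns the line-bundle coordinates into the honest numbers $\sqrt{(b-n)^2+w^2}$. From $m_\sigma(\mcO(n))^2-m_\sigma(\mcO(n'))^2=(n'-n)(2b-n-n')$ one recovers $b$ linearly, and then $w$ from any single coordinate; since all masses depend continuously on $\sigma$ and these are continuous inverse formulas, $\bP m$ is a homeomorphism onto its image. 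This is the content I would attribute to the injectivity statement and the well-definedness lemma cited above.

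For the closure I would compactify $\bH$ to the closed disk $\overline{\bH}=\bH\cup(\bR\cup\{\infty\})$, with boundary circle $\widehat{\bR}=S^1$, and extend $\bP m$ to it. There are exactly two regimes of boundary limits. As $(b,w)\to(b_0,0)$ with $b_0\in\bR$ finite, the skyscraper coordinate stays equal to $1$ while $\sqrt{(b-n)^2+w^2}\to|b_0-n|$, giving the limit $[\dots,|b_0-n|,\dots\,;1]$; these are pairwise distinct (the sequence $n\mapsto|b_0-n|$ determines $b_0$) and, having $w\to0$, are not attained on $\bH$. As $(b,w)\to\infty$ in $\bH$, i.e. $b^2+w^2\to\infty$, I would instead normalise by the coordinate of $\mcO$, namely $R=\sqrt{b^2+w^2}\to\infty$: then $\sqrt{(b-n)^2+w^2}/R\to1$ for every fixed $n$ while the skyscraper coordinate $1/R\to0$, so every such sequence converges to the single point $[\dots,1,1,\dots\,;0]$, matching the collapse of $\widehat{\bR}$ at $\infty$. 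Thus the extended map $\overline{\bP m}\colon\overline{\bH}\to\bP^{\mcS}_{\ge0}$ is a continuous injection, the $\infty$-point being separated from the finite boundary by its vanishing skyscraper coordinate.

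I would then conclude as follows: since $\overline{\bH}$ is compact and $\bP^{\mcS}_{\ge0}$ is Hausdorff, the continuous injection $\overline{\bP m}$ is a homeomorphism onto its image, and that image, being compact and hence closed and containing $\bP m(\bH)$, is exactly $\overline{\Geo(C)/\bC}$; therefore the compactification is homeomorphic to the closed disk. The main obstacle is the boundary analysis inside the genuinely infinite-dimensional $\bP^{\mcS}_{\ge0}$: one must check that the normalised mass coordinates converge uniformly enough to make $\overline{\bP m}$ continuous in the projective topology, and, if $\mcS$ contains objects that are not semistable throughout $\Geo(C)$ (for instance higher-rank bundles crossing walls), that their masses --- computed as sums $\sum_i|Z_{b,w}(A_i)|$ over Harder--Narasimhan factors --- still admit the same two limiting regimes. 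Both points reduce to continuity of the mass function together with the observation that skyscrapers and line bundles already separate every boundary point, so that the circle $\widehat{\bR}$ is the entire Thurston boundary.
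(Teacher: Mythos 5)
Your proposal is correct and follows essentially the same route as the paper: identify $\Geo(C)/\bC$ with $\bH$ via $\sigma_{\beta,\alpha}$, recover $(\beta,\alpha)$ from the masses of a skyscraper sheaf and two line bundles of distinct degrees (the paper uses $\mcO_p$, $\mcO_C$, $\mcO_C(-q)$, guaranteed by its Assumption on $\mcS$), extend $\bP m$ continuously to $\overline{\bH}$ with the point at infinity sent to $[\rk(E)]_{E\in\mcS}$, and conclude by the compact-to-Hausdorff argument. The two caveats you flag at the end are already handled by the paper's standing hypothesis that every object of $\mcS$ is slope stable (so $m_{\sigma_{\beta,\alpha}}(E)=|Z_{\beta,\alpha}(E)|$ with no Harder--Narasimhan correction), and continuity into $\bP^{\mcS}_{\geq 0}$ only requires coordinatewise convergence of a nonvanishing lift, since the topology on $\bR^{\mcS}_{\geq 0}$ is the product topology.
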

Note that the set $\Geo(C)$ of geometric stability conditions is equal to $\Stab(C)$ in the case of positive genus. 
Hence the above theorem gives a Thurston compactification of the whole space $\Stab(C)/\bC$ 
in that case.


\subsubsection{The case of elliptic curves}
In the case of elliptic curves, we compare 
our compactification 
with the classical one for the torus $\Sigma_1$
via the homological mirror symmetry 
\[
\Tilde{\Phi}_{\mathrm{PZ}}:D^b(X) \iso D^\pi\Fuk(\Tilde{X})
\]
due to Polishchuk--Zaslow (\cite{pz98}). 
Here, $X$ is an elliptic curve and $D^\pi\Fuk(\Tilde{X})$ is the derived Fukaya category of the mirror torus $\Tilde{X}$. 
Let $\Sph(X)$ and $\Sph(\Tilde{X})$ be the sets of isomorphism classes of spherical objects in $D^b(X)$ and $D^\pi\Fuk(\Tilde{X})$ respectively.
\begin{thm}[Theorem \ref{thm:main elliptic}]
There exist continuous maps $\eta$ and $\iota$ 
such that the following diagram commutes: 
\begin{center}
\begin{tikzcd}
  \Stab(X)/\bC \ar[r, hookrightarrow, "\bP m"] \arrow[d, "\sim"sloped, "\Tilde{\Phi}_{\mathrm{PZ}}"'] & \bP^{\Sph(X)}_{\geq 0} \ar[d, "\sim"sloped, "\bP\Tilde{\Phi}_{\mathrm{PZ}}"'] \\
  \Stab(\Tilde{X})/\bC \ar[r, hookrightarrow, "\bP m"] & \bP^{\Sph(\Tilde{X})}_{\geq 0}\\
      \mcT(\Sigma_1) \ar[r, hookrightarrow, "\bP l "] \arrow[u, "\eta", "\sim"'sloped] & \bP^{\mcS}_{\geq 0} \arrow[u, "\iota", hookrightarrow].
\end{tikzcd}
\end{center}
Moreover, it induces 
the homeomorphisms between Thurston compactifications: 
\[
\overline{\Stab(X)/\bC} 
\simeq
\overline{\Stab(\Tilde{X})/\bC}
\simeq
\overline{\mcT(\Sigma_1)}. 
\]
\end{thm}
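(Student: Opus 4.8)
The plan is to assemble the entire diagram from two geometric identifications, both realized concretely on the upper half-plane $\bH$, and then to transport the compactification statement of Theorem \ref{thm:intromain} across these identifications. The Polishchuk--Zaslow equivalence $\Tilde{\Phi}_{\mathrm{PZ}}$ (\cite{pz98}), being an exact equivalence of triangulated categories, induces by pushforward of stability conditions a $\bC$-equivariant homeomorphism $\Stab(X) \iso \Stab(\Tilde{X})$, hence the left-hand vertical isomorphism of the top square; it also carries spherical objects to spherical objects, giving a bijection $\Sph(X) \iso \Sph(\Tilde{X})$ and thereby the relabeling-of-coordinates homeomorphism $\bP\Tilde{\Phi}_{\mathrm{PZ}}$. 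Commutativity of the upper square is then formal, since mass is an invariant of an object under an equivalence: $m_{\Tilde{\Phi}_{\mathrm{PZ},*}\sigma}(\Tilde{\Phi}_{\mathrm{PZ}}E) = m_\sigma(E)$, so the two routes around the square agree coordinate by coordinate.

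Next I would construct $\eta$ and $\iota$. By the known description of stability conditions on an elliptic curve, $\Stab(X)/\bC$ is homeomorphic to $\bH$, a class $\tau\in\bH$ corresponding to the geometric central charge $Z(r,d) = -d + r\tau$ on the numerical class $(r,d)=(\rk,\deg)$; on the other hand $\mcT(\Sigma_1)$ is classically $\bH$, with $\tau$ giving the flat torus $\bC/(\bZ+\tau\bZ)$. Composing these identifications with the left vertical isomorphism yields the homeomorphism $\eta\colon \mcT(\Sigma_1)\iso\Stab(\Tilde{X})/\bC$. For $\iota$, I use that every isotopy class $\gamma\in\mcS$ of simple closed curve underlies a graded Lagrangian, hence a spherical object of $D^\pi\Fuk(\Tilde{X})$, giving a surjection $\Sph(\Tilde{X})\twoheadrightarrow\mcS$ remembering only the underlying curve; since mass is invariant under shift and independent of the grading, pullback along this surjection defines a continuous injection $\iota\colon \bP^{\mcS}_{\geq 0}\hookrightarrow\bP^{\Sph(\Tilde{X})}_{\geq 0}$.

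The crux is commutativity of the lower square, that is the identity $m_{\eta(t)}(E) = l_t(\gamma_E)$ relating the mass of a spherical object $E$ with underlying curve $\gamma_E$ to the length of the geodesic in the class $\gamma_E$. This is where the explicit mirror dictionary enters: a primitive class $(p,q)$ indexing a simple closed curve corresponds under $\eta$ and $\Tilde{\Phi}_{\mathrm{PZ}}$ to a spherical object of numerical class $(r,d)$, and the computation reduces to matching $|r\tau-d|=|Z(r,d)|$ with the flat length $|p+q\tau|$ of the geodesic. I expect this to be the main obstacle, since it requires pinning down the precise normalizations in the dictionary (the $\GL_2(\bZ)$-identification of $(r,d)$ with $(p,q)$ and of the Kähler and complex moduli) together with the fact that the mass of a stable spherical object equals $|Z|$; once the linear identification of lattices is fixed, equality of the two absolute values is a direct calculation.

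Finally, for the homeomorphisms of Thurston compactifications I combine the commuting diagram with Theorem \ref{thm:intromain}. As $X$ has genus one, $\Geo(X)=\Stab(X)$, so that theorem already presents $\overline{\Stab(X)/\bC}$ as a closed disk; the left vertical isomorphism together with the ambient homeomorphism $\bP\Tilde{\Phi}_{\mathrm{PZ}}$ carries this closure, and its boundary, to $\overline{\Stab(\Tilde{X})/\bC}$. The lower square, with $\eta$ a homeomorphism and $\iota$ an embedding compatible with the two Thurston embeddings, then identifies the closure of $\bP l(\mcT(\Sigma_1))$ with that of $\bP m(\Stab(\Tilde{X})/\bC)$; the one remaining point is to check that the boundary classes (projectivized masses on one side, projectivized lengths, i.e. measured foliations, on the other) correspond under $\iota$, which follows from the mass--length identity extended to the limiting directions. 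This gives $\overline{\Stab(X)/\bC}\simeq\overline{\Stab(\Tilde{X})/\bC}\simeq\overline{\mcT(\Sigma_1)}$.
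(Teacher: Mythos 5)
Your proposal follows essentially the same route as the paper: the upper square is formal from equivalence-invariance of mass, $\eta$ and $\iota$ are built exactly as you describe (the paper's $\iota$ pulls back along $(L,\lambda,M)\mapsto[L]$, and $\eta=\Tilde{\Phi}_{\mathrm{PZ}}\circ\varepsilon\circ\xi^{-1}$), the lower square is the mass--length identity, and the compactification statement is transported using Theorem \ref{thm:intromain}. Two points deserve emphasis. First, the ``normalization'' you flag as the main obstacle is resolved in the paper not by an ad hoc $\GL_2(\bZ)$-adjustment but by replacing $\Phi_{\mathrm{PZ}}$ with $\Tilde{\Phi}_{\mathrm{PZ}}=\Phi_{\mathrm{PZ}}\circ\Phi_{\mcP}^{-1}$, so that $(r,d)\mapsto -d\gamma_2+r\gamma_1$ and $|Z_{\beta,\alpha}(r,d)|=|-d+\tau r|$ matches $l_t(c_1\gamma_1+c_2\gamma_2)=|c_2+c_1\tau|$ on the nose (Remark \ref{rmk:period integral}); this is why the vertical equivalence in the theorem is the twisted one. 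Second, and more substantively, you invoke only that ``the mass of a \emph{stable} spherical object equals $|Z|$,'' but the lower square must commute in \emph{every} coordinate of $\bP^{\Sph(\Tilde{X})}_{\geq 0}$, so you need that \emph{every} spherical object is $\sigma$-stable for \emph{every} stability condition; this is Proposition \ref{prop:spherical}, proved via Bridgeland's classification of semistable objects on elliptic curves and is a genuine (if standard) input you should not omit. Finally, for the homeomorphisms of compactifications the paper avoids any matching of boundary classes: since $\overline{\mcT(T_2)}$ is compact and $\bP^{\Sph(\Tilde{X})}_{\geq 0}$ is Hausdorff, $\iota$ restricts to a closed embedding on $\overline{\mcT(T_2)}$ whose image contains the dense subset $\bP m(\Stab(\Tilde{X})/\bC)$, hence equals its closure; your proposed identification of limiting directions is unnecessary for this part (it is really the content of the separate boundary statement).
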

We also obtain a description of the boundary $\partial \Stab(X)/\bC$ of the Thurston compactification $\overline{\Stab(X)/\bC}$ in terms of the intersection numbers defined as the absolute value of the Euler pairing: 
\[i_X:\Sph(X)\times\Sph(X) \to \bZ_{\geq 0}, \quad 
(E,F) \mapsto |\chi(E,F)|,\]
\[i_{\Tilde{X}}:\Sph(\Tilde{X})\times\Sph(\Tilde{X}) \to \bZ_{\geq 0}, \quad 
(E,F) \mapsto |\chi(E,F)|.\]


\begin{thm}[Theorem \ref{thm:main elliptic}]
We have
\[
\partial \Stab(X)/\bC=\overline{i_{X*}(\Sph(X))},
\]
\[
\partial \Stab(\Tilde{X})/\bC=\overline{i_{\Tilde{X}*}(\Sph(\Tilde{X}))},
\]
where $i_{X*}\colon \Sph(X) \to \bP^{\Sph(X)}_{\geq 0}$ 
and $i_{\Tilde{X}*}\colon \Sph(\Tilde{X}) \to \bP^{\Sph(\Tilde{X})}_{\geq 0}$ are maps induced by the intersection numbers {\rm (Definition \ref{def-intersection})}. 
\end{thm}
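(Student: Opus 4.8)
The plan is to make both boundary descriptions explicit on the disk boundary of Theorem~\ref{thm:intromain}, and then transport the first to the second through the mirror equivalence. Since $X$ has genus one, $\Stab(X) = \Geo(X)$, so by Theorem~\ref{thm:intromain} the space $\overline{\Stab(X)/\bC}$ is a closed disk whose interior $\Stab(X)/\bC$ is identified with the upper half plane $\bH$ by sending a stability condition to the slope $\tau$ of its central charge $Z \colon K_{\mathrm{num}}(X) \cong \bZ^{\oplus 2} \to \bC$, normalized as $(r, d) \mapsto d - r\tau$ up to the $\bC$-action. Its boundary $\partial \Stab(X)/\bC$ is then a circle, which I would identify with $\partial \bH = \bP^1(\bR)$.

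The first point I would establish is that every spherical object is $\sigma$-stable for every $\sigma \in \Stab(X)$: on a curve all geometric stability conditions share the same (shifted) stable objects, namely the shifts of slope-stable sheaves, and a spherical object is exactly a shift of a stable sheaf of primitive class. Consequently $m_\sigma(F) = |Z_\sigma(F)|$ for all $F \in \Sph(X)$, so $\bP m(\sigma) = [\,|d_F - r_F \tau|\,]_{F}$. Fixing a spherical $E$ of primitive class $(r_E, d_E)$, I would then let $\tau \to d_E/r_E \in \bP^1(\bQ)$ and use the Euler-form identity $\chi(E, F) = r_E d_F - d_E r_F$ to compute
\[
\bP m(\sigma) = [\,|d_F - r_F \tau|\,]_{F} \longrightarrow [\,|r_E d_F - d_E r_F|\,]_{F} = [\,|\chi(E, F)|\,]_{F} = i_{X*}(E).
\]
Thus $i_{X*}(E)$ is the boundary point at the slope $d_E/r_E$, which gives $i_{X*}(\Sph(X)) \subset \partial\Stab(X)/\bC$ and hence $\overline{i_{X*}(\Sph(X))} \subset \partial\Stab(X)/\bC$.

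For the reverse inclusion I would argue by density. The slopes $\{d_E/r_E : E \in \Sph(X)\}$ are precisely the primitive classes of $\bZ^{\oplus 2}$, that is, all of $\bP^1(\bQ)$, which is dense in $\bP^1(\bR) = \partial\Stab(X)/\bC$; since the boundary parametrization $\text{slope} \mapsto \lim \bP m$ is continuous (the content of Lemma~\ref{lem:defbarm} together with Theorem~\ref{thm:intromain}), every boundary point is a limit of points $i_{X*}(E)$. This yields $\partial\Stab(X)/\bC = \overline{i_{X*}(\Sph(X))}$. The case of $\tilde{X}$ then follows by transport of structure: the equivalence $\tilde{\Phi}_{\mathrm{PZ}}$ induces a bijection $\Sph(X) \cong \Sph(\tilde{X})$ preserving the Euler pairing, so $i_X$ and $i_{\tilde{X}}$ correspond under it, and the commutative diagram identifies $\overline{i_{X*}(\Sph(X))}$ with $\overline{i_{\tilde{X}*}(\Sph(\tilde{X}))}$ and $\partial\Stab(X)/\bC$ with $\partial\Stab(\tilde{X})/\bC$.

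The step I expect to require the most care is the exactness of the mass formula, namely verifying that no spherical object ever destabilizes over $\Stab(X)$ so that $m_\sigma = |Z_\sigma|$ holds identically; once this is secured the boundary limit is a one-line computation and the density and transport arguments are routine. A secondary point needing attention is the compatibility of $\tilde{\Phi}_{\mathrm{PZ}}$ with the Euler pairing, ensuring that the two intersection forms $i_X$ and $i_{\tilde{X}}$ match under the induced bijection of spherical objects.
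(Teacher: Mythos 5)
Your proposal is correct, but it reaches the boundary description by a genuinely different route from the paper. The paper's argument is a transport of the \emph{classical} Thurston boundary theorem for the torus: it first shows (part (1) of Theorem \ref{thm:main elliptic}) that $\iota\colon\overline{\mcT(T_2)}\to\overline{\Stab(\tilde{X})/\bC}$ and $\bP\tilde{\Phi}_{\mathrm{PZ}}$ are homeomorphisms, then verifies the compatibility $\bP\tilde{\Phi}_{\mathrm{PZ}}(i_{X*}(E))=i_{\tilde{X}*}(L,\lambda,M)=\iota(i_*(\cl(L,\lambda,M)))$ using the Riemann--Roch formula (\ref{eq:Riemann-Roch}), the isometry of $\tilde{\varphi}_{\mathrm{PZ}}$, and the torus intersection formula $i(a\gamma_1+b\gamma_2,c\gamma_1+d\gamma_2)=|bc-ad|$, and finally quotes $\partial\mcT(T_2)=\overline{i_*(\mcS)}$ from Theorem \ref{thm:Thurston comactification}(3) to conclude. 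You instead work entirely inside $\Stab(X)$: using that spherical objects are $\sigma$-stable for all $\sigma$ (Proposition \ref{prop:spherical}), you compute the limit of $\bP m$ along $\tau\to d_E/r_E\in\bP^1(\bQ)$, identify it with $i_{X*}(E)$ via $\chi(E,F)=r_Ed_F-r_Fd_E$, and finish by density of $\bP^1(\bQ)$ in $\partial\overline{\bH}$ together with the continuity and injectivity of $\overline{\bP m}$ from Lemma \ref{lem:defbarm}; the $\tilde{X}$ case then follows by transport through $\tilde{\Phi}_{\mathrm{PZ}}$, which preserves the Euler pairing. Your approach is more self-contained (it does not invoke the Fathi--Laudenbach--Po\'enaru theorem and in effect reproves its $g=1$ case), at the cost of needing to treat the rank-zero spherical classes via the point $\infty\in\overline{\bH}$ and to verify carefully that no spherical object destabilizes --- both of which you correctly flag; the paper's approach buys the explicit identification with the classical torus picture, which is the main point of the section. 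The two proofs agree on the essential inputs (stability of spherical objects, the disk model of Theorem \ref{thm:closure}, and the Euler-pairing computation), so either is acceptable here.
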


Moreover, we give the characterizations of 
the so-called 
{\it periodic, reducible}, and {\it pseudo-Anosov} 
autoequivalces of $D^b(X)$ 
using the Thurston compactification 
(Proposition \ref{prop-ell}, \ref{prop-para} and \ref{prop-hyp}). 
It yields the Nielsen--Thurston classification of autoequivalences: 
\begin{thm}[Nielsen--Thurston classification, Theorem \ref{thm-NT}]
Each autoequivalence of $D^b(X)$, 
which acts on $\Stab(X)/\bC$ non-trivially, 
is of exactly one of the following types: periodic, reducible, or pseudo-Anosov. 
\end{thm}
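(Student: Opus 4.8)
The plan is to reduce the statement to the classical Nielsen--Thurston trichotomy for the mapping class group of the torus, using the identifications of Theorem \ref{thm:main elliptic}, and then to assemble the three characterizations established in Propositions \ref{prop-ell}, \ref{prop-para} and \ref{prop-hyp}. The point is that once those propositions are granted, the content of the theorem is the verification that the resulting three classes form a genuine partition of the non-trivially acting autoequivalences.

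First I would pin down the action on the quotient. Every $\Phi \in \Auteq(D^b(X))$ acts on $\Stab(X)$ commuting with the $\bC$-action, hence descends to $\Stab(X)/\bC$, and this action is compatible with the induced linear action on the numerical Grothendieck group $\mcN(X) \cong \bZ^2$ preserving the (antisymmetric) Euler pairing. This gives a homomorphism $\Auteq(D^b(X)) \to \SL(2,\bZ)$. I would then check that its kernel acts trivially on $\Stab(X)/\bC$: the shift $[1]$ realizes $-\id$ on $\mcN(X)$ but is absorbed into the $\bC$-action, while the connected autoequivalences $\Aut^0(X) \ltimes \Pic^0(X)$ act trivially on the quotient. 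Consequently $\Phi$ acts non-trivially on $\Stab(X)/\bC$ if and only if its image $\bar\Phi$ in $\mathrm{PSL}(2,\bZ)$ is non-trivial, and under the homeomorphism $\Stab(X)/\bC \simeq \mcT(\Sigma_1) \simeq \bH$ of Theorem \ref{thm:main elliptic} this action matches the standard action of $\mathrm{PSL}(2,\bZ) = \MCG(\Sigma_1)$ on $\bH$.

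Next I would recall the classical trichotomy for $\SL(2,\bZ)$: each non-identity element is elliptic ($|\tr| < 2$), parabolic ($|\tr| = 2$), or hyperbolic ($|\tr| > 2$), these cases being mutually exclusive and exhaustive by the value of the trace. On the Thurston boundary $\partial \Stab(X)/\bC \cong \partial\bH = \bP^1(\bR)$ these correspond respectively to having no boundary fixed point (finite order), exactly one boundary fixed point, or exactly two boundary fixed points with north--south dynamics. Propositions \ref{prop-ell}, \ref{prop-para} and \ref{prop-hyp} identify the periodic, reducible, and pseudo-Anosov autoequivalences as precisely those inducing these three dynamical behaviors on $\partial\Stab(X)/\bC$, so I would simply invoke them to transfer the trace trichotomy into the language of the three named types.

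Finally I would combine these: for $\Phi$ acting non-trivially, $\bar\Phi \in \mathrm{PSL}(2,\bZ)$ is a non-identity element, so exactly one of $|\tr| < 2$, $|\tr| = 2$, $|\tr| > 2$ holds, and by the cited propositions this is equivalent to $\Phi$ being periodic, reducible, or pseudo-Anosov. Mutual exclusivity is guaranteed because the three types produce distinct fixed-point sets on $\partial\Stab(X)/\bC$ (cardinalities $0$, $1$, $2$), so no $\Phi$ can satisfy two of them. I expect the main obstacle to be the bookkeeping of the first step, namely verifying that the kernel of $\Auteq(D^b(X)) \to \SL(2,\bZ)$ is exactly what acts trivially on $\Stab(X)/\bC$, so that ``acts non-trivially'' is faithfully equivalent to ``non-identity in $\mathrm{PSL}(2,\bZ)$''; the dynamical exclusivity on the boundary is what ultimately makes the word ``exactly'' rigorous.
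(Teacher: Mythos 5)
Your proposal is correct and follows essentially the same route as the paper: the paper's proof is precisely the combination of Corollary \ref{non-trivial} (non-trivial action on $\Stab(X)/\bC$ is equivalent to being a non-identity element of $\Gamma(X)\cong\mathrm{PSL}(2,\bZ)$, via the equivariant isometry of Proposition \ref{psl-equiv}) with the mutually exclusive and exhaustive trace trichotomy $|\mathrm{tr}|<2$, $=2$, $>2$ supplied by Propositions \ref{prop-ell}, \ref{prop-para} and \ref{prop-hyp}. Your extra bookkeeping on the kernel of $\Aut(D^b(X))\to\SL(2,\bZ)$ and the boundary fixed-point counts is exactly the content those cited results already encapsulate.
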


\subsubsection{The case of the projective line}
In the case of the projective line $\bP^1$, 
the map $\bP m$ from the whole space 
$\Stab(\bP^1)/\bC(\supsetneq{\Geo(\bP^1)/\bC})$
is unfortunately not injective:  
\begin{prop}[Proposition \ref{non-injectivity-p^1}]
The map 
\[
\bP m \colon \Stab(\bP^1)/\bC \to \bP^{\mcS} 
\]
is NOT injective 
for any choice of a set 
$\mcS \subset \Ob(D^b(\bP^1))$. 
\end{prop}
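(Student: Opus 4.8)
The plan is to produce a single pair of stability conditions $\sigma_1\neq\sigma_2$ in $\Stab(\bP^1)/\bC$ whose mass functions agree on \emph{every} object of $D^b(\bP^1)$. Such a pair maps to the same point under $\bP m$ no matter which $\mcS$ one chooses, so it rules out injectivity for all $\mcS$ simultaneously. The search is guided by two structural facts. First, $D^b(\bP^1)$ is hereditary, so each object is the direct sum of shifts of its cohomology sheaves, and each coherent sheaf is a direct sum of line bundles $\mcO(n)$ and torsion sheaves; since $m_\sigma$ is additive over direct sums and invariant under shifts, it is enough to understand the masses of these pieces. Second, by Okada's computation of $\Stab(\bP^1)$, outside the geometric locus $\Geo(\bP^1)$ one finds algebraic chambers whose heart is the category of representations of the Kronecker quiver (with two arrows), the two simple objects $S_1,S_2$ arising from the exceptional pair $(\mcO,\mcO(1))$. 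I would look for $\sigma_1,\sigma_2$ inside such a chamber, which is precisely the part of $\Stab(\bP^1)$ absent from $\Geo(\bP^1)$.

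The key step is a stability computation inside one algebraic chamber. Writing $z_i=Z(S_i)\in\bH$, exactly one of the simples, say $S_1$, is a sub-object of every representation whose dimension vector has both entries positive. In the sub-region where $\phi(S_1)>\phi(S_2)$ the phase of $S_1$ is maximal, so this sub-object strictly destabilises every such representation; consequently the only stable objects in this sub-region are $S_1$ and $S_2$ and their shifts. In particular the skyscraper sheaves $\mcO_x$, being regular modules of dimension vector $(1,1)$, are unstable, which confirms that the sub-region is non-geometric. The Harder--Narasimhan filtration of an arbitrary object then has only $S_1$- and $S_2$-factors, so its mass equals $c_1|z_1|+c_2|z_2|$, where $(c_1,c_2)$ is the (absolute) dimension vector determined by the class of the object alone.

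It follows that, throughout this sub-region, the mass of every object depends only on the two positive reals $|z_1|,|z_2|$, hence $\bP m$ factors through the single ratio $|z_1|/|z_2|$. The sub-region is two real parameters wide modulo the free $\bC$-action, while this ratio is one parameter, so $\bP m$ cannot be injective there. Concretely, I would take $\sigma_2$ obtained from $\sigma_1$ by rotating $z_2$ through a small phase, keeping $z_2\in\bH$ and $\phi(S_1)>\phi(S_2)$: this leaves $|z_1|,|z_2|$ unchanged, so $m_{\sigma_1}(E)=m_{\sigma_2}(E)$ for all $E$, yet it changes the ratio $z_1/z_2$, so $\sigma_1,\sigma_2$ lie in distinct $\bC$-orbits. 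Thus $\bP m(\sigma_1)=\bP m(\sigma_2)$ for every $\mcS$, proving non-injectivity.

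The main obstacle is the stability computation of the second paragraph: verifying that once $\phi(S_1)>\phi(S_2)$ no further stable object survives --- no preprojective, preinjective, or regular Kronecker module --- so that the mass genuinely linearises in $|z_1|,|z_2|$. This rests on the explicit representation theory of the Kronecker quiver together with Okada's identification of the algebraic chambers, and one must keep careful track of which simple is the universal sub-object in order to pin down the correct inequality $\phi(S_1)>\phi(S_2)$.
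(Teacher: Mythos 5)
Your proposal is correct and follows the same strategy as the paper: exhibit two stability conditions in a non-geometric chamber sharing the same heart and the same (very small) set of stable objects, whose central charges differ only in the argument of one of the two values $Z(S_1),Z(S_2)$, so that every mass $m_\sigma(E)=c_1(E)|z_1|+c_2(E)|z_2|$ is unchanged while the $\bC$-orbit is not. The one substantive difference is the choice of chamber. You work in a $p=1$ chamber, where the heart is the full Kronecker category, and so you must carry out the stability analysis you flag as the main obstacle (the universal simple subobject destabilising every representation with both dimension-vector entries positive once $\phi(S_1)>\phi(S_2)$); this is correct but is genuine work. The paper instead takes the heart $\mcA_{p,i,0}=\langle\mcO_{\bP^1}(i-1)[p],\mcO_{\bP^1}(i)\rangle_{ex}$ with $p\geq 2$, where all extensions between the two generators vanish, so every object of the heart is literally a direct sum $\mcO_{\bP^1}(i-1)[p]^{\oplus k}\oplus\mcO_{\bP^1}(i)^{\oplus l}$ (this is recorded in Proposition \ref{prop:P1heart}) and the identity of mass functions is immediate with no representation theory. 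One small imprecision in your write-up: the multiplicities $(c_1,c_2)$ are not determined by the class of $E$ in $K$-theory (e.g.\ $S_1\oplus S_1[1]$ has trivial class but positive mass); they are determined by the cohomology objects of $E$ with respect to the chosen $t$-structure, which is all the argument actually needs since these are independent of the central charge.
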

A non-injectivity of the map $\bP m$ 
is related to the existence of 
a semiorthogonal decomposition 
(Remark \ref{rem:non-inj}). 
We however capture an interesting phenomenon: 
the image $\bP m(\Stab(\bP^1)/\bC)$ gives a partial compactification of $\Geo(\bP^1)/\bC$ 
(Theorem \ref{thm-explicit-im-p^1} and Figure 2). 

\vspace{2mm}
The case of K3 surfaces is also considered in our ongoing work (\cite{KKOk3}). 

\subsection{Outline of the paper}
The paper is organized as follows: 
In Section \ref{sec:prel}, we review basic definitions in the theory of Bridgeland stability conditions. 
In Section \ref{section-thurston-cpt}, we quickly review Teichm{\"u}ller theory and propose some problems regarding the constructions of Thurston compactifications for the spaces of stability conditions. 
In Section \ref{sec:Geo}, we prove Theorem \ref{thm:intromain}. 
In Sections \ref{section-ell} and \ref{sec:P1}, 
we investigate the cases of elliptic curves and the projective line in further details. 

\begin{ACK}
The authors would like to thank Professor Arend Bayer 
for valuable discussions. 
K.K. is supported by JSPS KAKENHI Grant Number 20K22310 and 21K13780. 
N.K. was supported by ERC Consolidator grant WallCrossAG, no.~819864. 
G.O. is supported by JSPS KAKENHI Grant Number 19K14520. 
\end{ACK}

\begin{NaC}
Throughout the paper, we work over the complex number field $\bC$. 
For a smooth projective variety $X$, $D^b(X)$ denotes the bounded derived category of coherent sheaves on $X$.
\end{NaC}

\section{Preliminaries on Bridgeland stability conditions} \label{sec:prel}

In this section, we recall the definition of stability conditions and the basic properties of the space of stability conditions. 

Throughout this paper, $\mcD$ is a triangulated category of finite type over $\mathbb{C}$, and $\Aut(\mcD)$ is the group of autoequivalences of $\mcD$. We denote the Grothendieck group of $\mcD$ by $K(\mcD)$. 

Fix a finitely generated free abelian group $\Lambda$, a surjective group homomorphism $\cl: K(\mcD)\twoheadrightarrow\Lambda$ and a norm $||\cdot||$ on $\Lambda\otimes_\bZ \bR$. 
We moreover assume the existence of a group homomorpshim $\alpha: \Aut(\mcD)\to\Aut_{\bZ}(\Lambda)$ such that the following diagram is commutative for all $\Phi \in\Aut(\mcD)$: 
\[
\xymatrix{
    K(\mcD) \ar[r]^{K(\Phi)} \ar[d]_{\cl} & K(\mcD) \ar[d]_{\cl} \\
    \Lambda \ar[r]^{\alpha(\Phi)} & \Lambda. 
}
\]
If such $\alpha$ exists, it is uniquely determined. 

\begin{defin}[{\cite[Definition 5.1]{bri}}]
A {\it stability condition} $\sigma=(Z,\mcP)$ on $\mcD$ (with respect to $(\Lambda, \cl)$) consists of a group homomorphism
$Z: \Lambda\to\bC$ called a {\it central charge} and a family $\mcP=\{\mcP(\phi)\}_{\phi\in\bR}$ of full additive subcategory of $\mcD$ called a {\it slicing}, such that
\begin{enumerate}
\item
For $\phi \in \mathbb{R}$ and $0\neq E\in\mcP(\phi)$, we have $Z(\cl(E))=m(E)\exp(i\pi\phi)$ for some $m(E)\in\bR_{>0}$. 
\item
For all $\phi\in\bR$, we have $\mcP(\phi+1)=\mcP(\phi)[1]$.
\item
For $\phi_1>\phi_2$ and $E_i\in\mcP(\phi_i)$, we have $\Hom(E_1,E_2)= 0$.
\item
For each $0\neq E\in\mcD$, there is a collection of exact triangles called {\it Harder--Narasimhan filtration} of $E$: 
\begin{equation}\label{HN}
\begin{xy}
(0,5) *{0=E_0}="0", (20,5)*{E_{1}}="1", (30,5)*{\dots}, (40,5)*{E_{p-1}}="k-1", (60,5)*{E_p=E}="k",
(10,-5)*{A_{1}}="n1", (30,-5)*{\dots}, (50,-5)*{A_{p}}="nk",
\ar "0"; "1"
\ar "1"; "n1"
\ar@{.>} "n1";"0"
\ar "k-1"; "k" 
\ar "k"; "nk"
\ar@{.>} "nk";"k-1"
\end{xy}
\end{equation}
with $A_i\in\mcP(\phi_i)$ and $\phi_1>\phi_2>\cdots>\phi_p$. 
\item
(support property)
There exists a constant $C>0$ such that for all $\phi \in \mathbb{R}$ and $0\neq E\in\mcP(\phi)$, 
we have 
\begin{equation}
||\cl(E)||<C|Z(\cl(E))|.
\end{equation}
\end{enumerate}
\end{defin}
For any interval $I\subset\bR$, define $\mcP(I)$ to be the extension-closed subcategory of $\mcD$ generated by the subcategories $\mcP(\phi)$ for $\phi\in I$. 
Then $\mcP((0,1])$ is the heart of a bounded t-structure on $\mcD$, hence an abelian category.  
The full subcategory $\mcP(\phi)\subset\mcD$ is also shown to be abelian. 
A non-zero object $E\in\mcP(\phi)$ is called a {\it $\sigma$-semistable object} of {\it phase} $\phi_\sigma(E):=\phi$, and a simple object in $\mcP(\phi)$ is called a {\it $\sigma$-stable object}. 
Taking the Harder--Narasimhan filtration (\ref{HN}) of $E \in \mcD$, we define $\phi^+_\sigma(E):=\phi_\sigma(A_1)$ and $\phi^-_\sigma(E):=\phi_\sigma(A_p)$. 
The object $A_i$ is called {\it $\sigma$-semistable factor} of $E$. 
Define ${\rm Stab}_\Lambda(\mcD)$ to be the set of stability conditions on $\mcD$ with respect to $(\Lambda,\cl)$.

In this paper, we assume that the space ${\rm Stab}_\Lambda(\mcD)$ is not empty. 
By abuse notation, we write $Z(E)$ instead of $Z(\cl(E))$. 

We prepare some terminologies on stability functions on the heart of a $t$-structure on $\mcD$. 
\begin{defin}
Let $\mcA$ be the heart of a bounded $t$-structure on $\mcD$. 
A {\it stability function} on $\mcA$ is a group homomorphism $Z: \Lambda\to\bC$ such that for all $0\neq E\in\mcA\subset\mcD$, the complex numbers $Z(E)$ lie in the semiclosed upper half plane $\bH_{-}:=\{re^{i\pi\phi}\in\bC~|~r\in\bR_{>0},\phi\in(0,1]\}\subset\bC$. 
\end{defin}
Given a stability function $Z: \Lambda\to\bC$ on $\mcA$, the {\it phase} of an object $0\neq E\in\mcA$ is defined to be $\phi(E):=\frac{1}{\pi}{\rm arg}Z(E)\in(0,1]$. 
An object $0\neq E\in\mcA$ is {\it $Z$-semistable} (resp. {\it $Z$-stable}) if for all subobjects $0\neq A\subset E$, we have $\phi(A)\le\phi(E)$ (resp. $\phi(A)<\phi(E)$). 
We say that a stability function $Z$ satisfies {\it the Harder--Narasimhan property} if 
each object $0\neq E\in\mcA$ admits a filtration (called the Harder--Narasimhan filtration of $E$) 
$0=E_0\subset E_1\subset E_2\subset\cdots\subset E_m=E$ such that $E_i/E_{i-1}$ is $Z$-semistable for $i=1,\cdots,m$ with $\phi(E_1/E_0)>\phi(E_2/E_1)>\cdots>\phi(E_m/E_{m-1})$. 
A stability function $Z$ satisfies 
{\it the support property} if there exists a constant $C>0$ such that for all $Z$-semistable objects $E\in\mcA$, 
we have $||\cl(E)||<C|Z(\cl(E))|$. 

The following proposition shows the relationship between stability conditions and stability functions on the heart of a bounded $t$-structure. 
\begin{prop}[{\cite[Proposition 5.3]{bri}}]
Giving a stability condition on $\mcD$ is equivalent to giving the heart $\mcA$ of a bounded t-structure on $\mcD$, and a stability function $Z$ on $\mcA$ with the Harder--Narasimhan property and the support property. 
\end{prop}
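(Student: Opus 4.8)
The plan is to prove the equivalence by giving explicit constructions in both directions and verifying that they are mutually inverse.

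The direction from a stability condition to a heart with a stability function is the easy one. Given $\sigma=(Z,\mcP)$, I would set $\mcA:=\mcP((0,1])$, which is the heart of a bounded $t$-structure on $\mcD$ by the remark recorded just before the proposition. For $0\neq E\in\mcA$ the Harder--Narasimhan factors all have phase in $(0,1]$, so $Z(E)$ is a sum of elements of $\bH_{-}$ and hence lies in $\bH_{-}$; thus $Z$ restricts to a stability function on $\mcA$. Since the $Z$-semistable objects of $\mcA$ of phase $\phi$ are exactly the objects of $\mcP(\phi)$ for $\phi\in(0,1]$, the Harder--Narasimhan property and the support property for $Z$ on $\mcA$ are direct transcriptions of axioms (4) and (5).

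For the converse, suppose given a heart $\mcA$ and a stability function $Z$ on $\mcA$ satisfying the Harder--Narasimhan and support properties. I would build a slicing by declaring, for $\phi\in(0,1]$, that $\mcP(\phi)$ consists of $0$ together with the $Z$-semistable objects of $\mcA$ of phase $\phi$, and then extending to all real $\phi$ through the rule $\mcP(\phi+1):=\mcP(\phi)[1]$. Axioms (1) and (2) hold by construction, the compatibility of the central charge with shifts is automatic, and the support property (5) is again a verbatim transcription. What remains, and what I expect to be the main obstacle, are the Hom-vanishing axiom (3) and the existence of Harder--Narasimhan filtrations in $\mcD$ required by axiom (4).

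For axiom (3), within a single copy of $\mcA$ the vanishing $\Hom(E_1,E_2)=0$ for $\phi(E_1)>\phi(E_2)$ follows from the definition of semistability, and the general case reduces to this using that $\mcA$ is the heart of a bounded $t$-structure, so $\Hom(A,B[k])=0$ for $A,B\in\mcA$ and $k<0$, together with the shift relation. For axiom (4), given $0\neq E\in\mcD$ I would first use the bounded $t$-structure to decompose $E$ via its finitely many nonzero cohomology objects $H^i(E)\in\mcA$, observing that $H^i(E)[-i]\in\mcP((-i,-i+1])$, so that ordering the cohomologies by increasing $i$ produces pieces lying in strictly decreasing, non-overlapping phase ranges. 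I would then refine each cohomology piece by the Harder--Narasimhan filtration of $H^i(E)$ in $\mcA$ provided by the hypothesis and splice the filtrations together; the delicate point is to check that the phases remain strictly decreasing across each splice, which holds because the lowest factor of the $i$-th piece has phase $>-i$ while the highest factor of the $(i+1)$-th piece has phase $\leq -i$. Finally I would confirm that the two constructions are mutually inverse, which amounts to noting that $\mcP((0,1])$ recovers the original heart $\mcA$ and that the semistable objects extracted from the reconstructed slicing are precisely the $Z$-semistable objects of $\mcA$.
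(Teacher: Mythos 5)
Your proposal is correct and follows essentially the same route as the paper, which only records the two constructions ($\mcP(\phi)$ as the $Z$-semistable objects of phase $\phi$ in one direction, $\mcA:=\mcP((0,1])$ in the other) and defers the verification to Bridgeland's original argument. Your filling-in of the details --- in particular the splicing of the cohomology decomposition with the Harder--Narasimhan filtrations inside $\mcA$ and the check that the phase ranges $(-i,-i+1]$ do not overlap --- is exactly the standard verification from \cite[Proposition 5.3]{bri}.
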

For the proof, we construct the slicing $\mcP$ from the pair $(Z,\mcA)$ by 
\[
\mcP(\phi):=\{E\in\mcA~|~E\text{ is }Z\text{-semistable with }\phi(E)=\phi\} \cup \{ 0\}\text{ for }\phi\in(0,1],
\]
and extend it for all $\phi\in\bR$ by $\mcP(\phi+1):=\mcP(\phi)[1]$. 
Conversely, for a stability condition $\sigma=(Z,\mcP)$, the heart $\mcA$ is given by $\mcA:=\mcP_\sigma((0,1])$. 
We also denote a stability condition as $(Z,\mcA)$. 

The following notion is important to analyze the space of stability conditions: 
\begin{defin}
Let $E\in\mcD$ be a non-zero object of $\mcD$ and $\sigma\in {\rm Stab}_\Lambda(\mcD)$ be a stability condition on $\mcD$. 
The {\it mass} $m_{\sigma}(E)\in\bR_{>0}$ of $E$ is defined by
\begin{equation}
m_{\sigma}(E):=\displaystyle\sum_{i=1}^p|Z_\sigma(A_i)|, 
\end{equation}
where $A_1,\cdots,A_p$ are $\sigma$-semistable factors of $E$. 
\end{defin}

The following generalized metric (i.e. with values in $[0,\infty]$) is defined by Bridgeland. 
\begin{defin}[{\cite[Proposition 8.1]{bri}}]
The generalized metric $d_B$ on ${\rm Stab}_\Lambda(\mcD)$ is defined by 
\begin{equation}
d_B(\sigma, \tau):=\sup_{E\neq0}\left\{|\phi^+_\sigma(E)-\phi^+_\tau(E)|, |\phi^-_\sigma(E)-\phi^-_\tau(E)|, \middle|\log\frac{m_\sigma(E)}{m_\tau(E)}\middle|\right\}\in[0,\infty]. 
\end{equation}
\end{defin}
This generalized metric induces the topology on ${\rm Stab}_\Lambda(\mcD)$, 
and it takes finite values on each connected component ${\rm Stab}_\Lambda^\dagger(\mcD)$ of ${\rm Stab}_\Lambda(\mcD)$. 
Thus $({\rm Stab}_\Lambda^\dagger(\mcD),d_B)$ is a metric space in the strict sense. 
\begin{thm}[{\cite[Theorem 7.1]{bri}}]
The map
\begin{equation}
{\rm Stab}_\Lambda(\mcD)\to\Hom_\bZ(\Lambda, \bC);~\sigma=(Z,\mcP)\mapsto Z
\end{equation}
is a local homeomorphism, where $\Hom_\bZ(\Lambda, \bC)$ is equipped with the natural linear topology. 
\end{thm}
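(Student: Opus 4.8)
The plan is to exhibit the forgetful map $\mcZ\colon \sigma = (Z,\mcP)\mapsto Z$ as a local homeomorphism by producing, near each $\sigma$, a continuous local inverse that deforms the slicing. Continuity of $\mcZ$ itself is straightforward from the definition of $d_B$: the mass and phase bounds that control $d_B(\sigma,\tau)$ directly bound $|Z_\sigma(E)-Z_\tau(E)|$ for each object $E$, and since $\Lambda$ is finitely generated it suffices to test on objects representing a generating set. The substance of the theorem is therefore the construction of the inverse.

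The core is a deformation lemma. Fix $\sigma=(Z,\mcP)$ with support constant $C$ and choose $0<\epsilon<1/4$. Let $\|\cdot\|_{\mathrm{op}}$ denote the operator norm on $\Hom_\bZ(\Lambda,\bC)$ induced by $\|\cdot\|$. I claim that whenever $W\in\Hom_\bZ(\Lambda,\bC)$ satisfies $\|W-Z\|_{\mathrm{op}}<C^{-1}\sin(\pi\epsilon)$, there is a unique stability condition $\tau=(W,\mcQ)$ with $\mcQ(\psi)\subset\mcP((\psi-\epsilon,\psi+\epsilon))$ for all $\psi$. The role of the support property is precisely to convert the open condition $\|W-Z\|_{\mathrm{op}}<C^{-1}\sin(\pi\epsilon)$ in the linear topology into the pointwise estimate
\[
|W(E)-Z(E)|\le\|W-Z\|_{\mathrm{op}}\,\|\cl(E)\|<C^{-1}\sin(\pi\epsilon)\cdot C|Z(E)|=\sin(\pi\epsilon)|Z(E)|
\]
valid for every $\sigma$-semistable $E$. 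This estimate says that $W(E)$ lies in the open cone of half-angle $\pi\epsilon$ about $Z(E)$; in particular the $W$-phase of a $\sigma$-semistable object of phase $\phi$ lies in $(\phi-\epsilon,\phi+\epsilon)$.

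To construct $\mcQ$ I would re-run stability with respect to $W$ inside each thin slice. For $\psi\in\bR$ the subcategory $\mcP((\psi-\epsilon,\psi+\epsilon))$ is quasi-abelian, the interval having length $2\epsilon<1$, and by the cone estimate $W$ sends its nonzero objects into an open half-plane, hence restricts to a stability function there; I then set $\mcQ(\psi)$ to be the $W$-semistable objects of phase $\psi$ in this slice. One checks the slicing axioms: axiom (2) is built in, the Hom-vanishing (3) and the Harder--Narasimhan property (4) follow by comparing $Z$- and $W$-filtrations within the slices, and the support property for $\tau$ is inherited from that of $\sigma$ since $W$ and $Z$ are uniformly comparable on semistable objects. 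The containment $\mcQ(\psi)\subset\mcP((\psi-\epsilon,\psi+\epsilon))$ simultaneously yields the metric bound $d_B(\sigma,\tau)\le\epsilon$ and, since it pins down each $\mcQ(\psi)$ as a canonical subquotient category determined by $\mcP$, the uniqueness of $\tau$.

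Granting the lemma, the theorem follows formally. The assignment $W\mapsto\tau$ is a local inverse to $\mcZ$ on an open neighborhood of $Z$, and the bound $d_B(\sigma,\tau)\le\epsilon\to0$ as $\|W-Z\|_{\mathrm{op}}\to0$ shows it is continuous; combined with continuity of $\mcZ$ and local injectivity (two stability conditions near $\sigma$ with the same central charge both satisfy the thin-slice containment, hence coincide), this exhibits $\mcZ$ as a local homeomorphism. I expect the main obstacle to be the Harder--Narasimhan property for $(W,\mcQ)$, namely that every object of $\mcD$ admits a finite $\mcQ$-filtration with strictly decreasing phases, where one must bound the number of $W$-semistable factors produced inside the slices uniformly; this is exactly where the support property, rather than mere local finiteness, does the work.
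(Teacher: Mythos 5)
This theorem is quoted verbatim from Bridgeland \cite[Theorem 7.1]{bri}; the paper supplies no proof of its own, so the only meaningful comparison is with the cited source, and your sketch follows exactly that argument: the support property upgrades the operator-norm bound $\|W-Z\|_{\mathrm{op}}<C^{-1}\sin(\pi\epsilon)$ to the sector estimate $|W(E)-Z(E)|<\sin(\pi\epsilon)|Z(E)|$ on $\sigma$-semistable objects, and the new slicing is rebuilt by running Harder--Narasimhan theory for $W$ inside the thin quasi-abelian slices $\mcP((\psi-\epsilon,\psi+\epsilon))$, with uniqueness and the metric bound $d_B(\sigma,\tau)\le\epsilon$ coming from the containment $\mcQ(\psi)\subset\mcP((\psi-\epsilon,\psi+\epsilon))$. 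You also correctly flag the one genuinely hard step (finiteness of the $W$-filtrations inside the slices, which is where the support property replaces Bridgeland's original local-finiteness hypothesis), so the outline is sound and matches the standard proof.
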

Therefore the space ${\rm Stab}_\Lambda(\mcD)$ (and each connected component ${\rm Stab}_\Lambda^\dagger(\mcD)$) naturally admits a structure of a finite dimensional complex manifold. 
\begin{thm}[{\cite[Thorem 3.6]{Woo}}]
The metric space $({\rm Stab}_\Lambda^{\dagger}(\mcD), d_B)$ is complete. 
Moreover, the limit point $\sigma_\infty=(Z_\infty, \mcP_\infty)$ of a Cauchy sequence $\{\sigma_n=(Z_n,\mcP_n)\}_n\subset {\rm Stab}_\Lambda^\dagger(\mcD)$ is described by
\begin{eqnarray*}
&&Z_\infty=\lim_{n\to\infty}Z_n,\\
&&\mcP_\infty(\phi)=\langle0\neq E\in\mcT~|~\lim_{n\to\infty}\phi^+_{\sigma_n}(E)=\phi,~\lim_{n\to\infty}\phi^-_{\sigma_n}(E)=\phi\rangle\cup\{0\}. 
\end{eqnarray*}
\end{thm}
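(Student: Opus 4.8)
The plan is to build the limit $\sigma_\infty$ of a $d_B$-Cauchy sequence $\{\sigma_n=(Z_n,\mcP_n)\}$ by hand out of the sequence, check that it is a stability condition lying in the same connected component, and then verify that $\sigma_n\to\sigma_\infty$; the explicit description in the statement will fall out of the construction. First I would show that the central charges converge. The basic estimate is that closeness in $d_B$ forces closeness of central charges relative to mass: if $E$ is $\sigma$-semistable of phase $\phi$ and $d_B(\sigma,\tau)=\epsilon<\tfrac12$, then every $\tau$-semistable factor of $E$ has phase in $(\phi-\epsilon,\phi+\epsilon)$ and $m_\tau(E)\in[e^{-\epsilon},e^{\epsilon}]\,m_\sigma(E)$, so a short computation with $Z_\tau(E)=\sum_j m_\tau(B_j)e^{i\pi\psi_j}$ gives $|Z_\tau(E)-Z_\sigma(E)|\le g(\epsilon)\,m_\sigma(E)$ with $g(\epsilon)\to 0$ as $\epsilon\to 0$. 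Summing over the $\sigma$-semistable factors extends this bound to every nonzero $E$. Since $|\log(m_{\sigma_n}(E)/m_{\sigma_m}(E))|\le d_B(\sigma_n,\sigma_m)$, the masses $m_{\sigma_n}(E)$ converge to a positive real and are bounded, so the estimate shows that $\{Z_{\sigma_n}(\cl(E))\}_n$ is Cauchy in $\bC$ for every $E$. As $\cl$ is surjective and each $Z_n$ is a homomorphism, the pointwise limit defines a group homomorphism $Z_\infty:=\lim_n Z_n\colon\Lambda\to\bC$.

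Next I would produce the slicing. For each nonzero $E$ the terms $\phi^{\pm}_{\sigma_n}(E)$ are Cauchy in $\bR$ by the definition of $d_B$, so the limits $\phi^{\pm}_\infty(E):=\lim_n\phi^{\pm}_{\sigma_n}(E)$ exist, and I set $\mcP_\infty(\phi)$ to be the extension-closed subcategory generated by the nonzero objects $E$ with $\phi^{+}_\infty(E)=\phi^{-}_\infty(E)=\phi$, exactly as in the statement. The formal slicing axioms are then quick: compatibility with the shift comes from $\phi^{\pm}_{\sigma_n}(E[1])=\phi^{\pm}_{\sigma_n}(E)+1$; since the $\sigma_n$-semistable factors of a generator $E$ of $\mcP_\infty(\phi)$ have phases converging to $\phi$, the limit $Z_\infty(E)$ is nonzero and lies on the ray of argument $\pi\phi$, giving compatibility with $Z_\infty$; and the vanishing $\Hom(E_1,E_2)=0$ for $E_i\in\mcP_\infty(\phi_i)$ with $\phi_1>\phi_2$ follows by taking $n$ large enough that the $\sigma_n$-phases of $E_1$ and $E_2$ are strictly separated and invoking the vanishing already known for $\sigma_n$.

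The main obstacle is to prove that $(Z_\infty,\mcP_\infty)$ is genuinely a stability condition, i.e.\ that Harder--Narasimhan filtrations exist with respect to $\mcP_\infty$ and that the support property holds. For the HN property one would assemble the filtration of a given $E$ as a limit of its $\sigma_n$-HN filtrations: the factor phases converge by the first step, and the point is to show that the set of distinct limiting phases stabilizes and that the filtrations can be matched compatibly for large $n$, so that in the limit one obtains a filtration whose factors are $\mcP_\infty$-semistable with strictly decreasing phases. This is precisely where the support property is indispensable, since it keeps the masses of semistable factors bounded away from $0$ and prevents the slicing from degenerating; I would prove that $\sigma_\infty$ satisfies the support property with a constant taken uniformly from the $\sigma_n$, using the comparability of masses and central charges along the sequence established in the first step (equivalently, one may phrase this via Bridgeland's local homeomorphism $\sigma\mapsto Z_\sigma$, whose deformation neighbourhood of $\sigma_n$ must be shown not to shrink to zero, so that the stability condition with central charge $Z_\infty$ exists for large $n$).

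Finally, once $\sigma_\infty=(Z_\infty,\mcP_\infty)$ is known to be a stability condition with $\phi^{\pm}_{\sigma_\infty}(E)=\phi^{\pm}_\infty(E)$ and $m_{\sigma_\infty}(E)=\lim_n m_{\sigma_n}(E)$, convergence follows by letting $m\to\infty$ inside the Cauchy estimate: for $n$ large the suprema defining $d_B(\sigma_n,\sigma_m)$ are uniformly small in $m$, and passing to the limit bounds each $E$-term of $d_B(\sigma_n,\sigma_\infty)$ by the same quantity, so $d_B(\sigma_n,\sigma_\infty)\to 0$. Because $d_B(\sigma_n,\sigma_\infty)<\infty$, the limit lies in the same connected component $\Stab_\Lambda^{\dagger}(\mcD)$, which gives completeness; the displayed formulas for $Z_\infty$ and $\mcP_\infty$ are then exactly the output of the construction.
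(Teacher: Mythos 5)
The paper does not prove this statement at all: it is quoted verbatim from Woolf's paper \cite[Theorem 3.6]{Woo}, so there is no in-paper argument to compare against. Judged on its own, your sketch is essentially a reconstruction of Woolf's proof and the overall strategy is sound: the estimate $|Z_\tau(E)-Z_\sigma(E)|\le g(\epsilon)\,m_\sigma(E)$ controlling central charges by masses, the pointwise limit $Z_\infty$, the definition of $\mcP_\infty(\phi)$ via the limiting phases $\phi^\pm_\infty$, and the final passage to the limit in the Cauchy estimate are all exactly the right steps. Your observation that the support property constant can be taken uniformly along the tail of the sequence (because $||\cl(E)||<C_N\,m_{\sigma_N}(E)\le C_N e^{\epsilon}|Z_{\sigma_m}(E)|$ for $\sigma_m$-semistable $E$) is also correct and is the key point.

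The one place where your primary route is thinner than it looks is the assembly of Harder--Narasimhan filtrations ``as a limit of the $\sigma_n$-HN filtrations'': the number of HN factors of a fixed object can vary with $n$, phases can merge in the limit, and there is no a priori matching between the filtrations for different $n$, so showing directly that the limiting decomposition is a genuine HN filtration with $\mcP_\infty$-semistable factors requires real work that the sketch does not supply. The parenthetical alternative you offer is the one that actually closes the gap and is what Woolf does: the uniform support constant gives a deformation neighbourhood of uniform radius around each $\sigma_n$ (via Bridgeland's deformation theorem, whose radius depends only on the support constant), so for $n$ large the homomorphism $Z_\infty$ lifts to a stability condition $\sigma_\infty$ at bounded distance from $\sigma_n$; one then verifies a posteriori that its slicing agrees with the displayed description and that $d_B(\sigma_n,\sigma_\infty)\to 0$. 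If you promote that parenthetical remark to the main line of argument and demote the HN-matching discussion to a consequence rather than a step, the proof is complete.
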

There are two group-actions on ${\rm Stab}_\Lambda(\mcD)$. 
The first one is a left $\Aut(\mcD)$-action defined by 
\begin{equation}
\Phi\sigma:=(Z_\sigma(\alpha(\Phi^{-1})\cl(-)), \{\Phi(\mcP_\sigma(\phi))\})~\text{ for }\sigma\in {\rm Stab}_\Lambda(\mcD),~\Phi\in\Aut(\mcD). 
\end{equation}
Let $\widetilde{{\rm GL}}_+(2,\bR)\to{\rm GL}_+(2,\bR)$ be the universal cover of ${\rm GL}_+(2,\bR)$. 
Then $\widetilde{{\rm GL}}_+(2,\bR)$ is isomorphic to the group of pairs $(M,f)$, where $f:\bR\to\bR$ is an increasing map with $f(\phi+1)=f(\phi)+1$, and $M\in{\rm GL}_+(2,\bR)$ such that the induced maps on $(\bR^2\backslash\{0\})/\bR_{>0}=S^1=\bR/2\bZ$ coincide. 
We have a right $\widetilde{{\rm GL}}_+(2,\bR)$-action
on $\Stab_\Lambda(\mcD)$ defined by
\begin{equation}\label{univ-cov-gl}
\sigma.g:=(M^{-1}\circ Z_\sigma, \{\mcP_\sigma(f(\phi))\})~\text{ for }\sigma\in {\rm Stab}_\Lambda(\mcD),~g=(M,f)\in\widetilde{{\rm GL}}_+(2,\bR). 
\end{equation}
These two actions commute, and the $\widetilde{{\rm GL}}_+(2,\bR)$-action is free and continuous, 
hence preserves any connected component of ${\rm Stab}_\Lambda(\mcD)$. 
The restriction of the $\widetilde{{\rm GL}}_+(2,\bR)$-action to the subgroup $\bC\subset\widetilde{{\rm GL}}_+(2,\bR)$ is given as follows: 
\begin{equation}
\sigma.\lambda=(\exp(-\sqrt{-1}\pi\lambda)\cdot Z_\sigma, \{\mcP_\sigma(\phi+{\rm Re}\hspace{0.5mm}\lambda)\})\text{ for }\lambda\in\bC. 
\end{equation}
\begin{lem}
The $\Aut(\mcD)$-action and the $\bC$-action are isometric with respect to $d_B$. 
\end{lem}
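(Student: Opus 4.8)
The plan is to reduce the isometry property of both actions to explicit transformation formulas for the three quantities $\phi^+_\sigma(E)$, $\phi^-_\sigma(E)$, and $m_\sigma(E)$ that enter the definition of $d_B$. Once these are established, the claim follows by substituting them into the formula for $d_B$ and observing that the action-dependent shifts and scalings either cancel in the differences or can be absorbed by reindexing the supremum. Concretely, I will show that the $\Aut(\mcD)$-action permutes objects while preserving all three quantities, and that the $\bC$-action introduces only a common additive shift of phases and a common multiplicative scaling of masses.

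For the $\Aut(\mcD)$-action, fix $\Phi \in \Aut(\mcD)$ and write $\Phi\sigma = (Z', \mcP')$. Since $\mcP'(\phi) = \Phi(\mcP(\phi))$, an object $E$ is $\Phi\sigma$-semistable of phase $\phi$ if and only if $\Phi^{-1}(E)$ is $\sigma$-semistable of phase $\phi$; applying $\Phi$ to the Harder--Narasimhan filtration of $\Phi^{-1}(E)$ with respect to $\sigma$ yields, by uniqueness, the Harder--Narasimhan filtration of $E$ with respect to $\Phi\sigma$. In particular $\phi^\pm_{\Phi\sigma}(E) = \phi^\pm_\sigma(\Phi^{-1}(E))$. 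For the mass, if $A_1,\dots,A_p$ are the $\sigma$-semistable factors of $\Phi^{-1}(E)$, then $\Phi(A_1),\dots,\Phi(A_p)$ are the $\Phi\sigma$-semistable factors of $E$, and using the compatibility $\cl \circ K(\Phi) = \alpha(\Phi)\circ \cl$ together with $Z' = Z \circ \alpha(\Phi^{-1})$ one computes $Z'(\Phi(A_i)) = Z(\cl(A_i))$, whence $m_{\Phi\sigma}(E) = m_\sigma(\Phi^{-1}(E))$. Substituting these identities into $d_B(\Phi\sigma, \Phi\tau)$ and using that $E \mapsto \Phi^{-1}(E)$ is a bijection on the nonzero objects of $\mcD$, the supremum over $E$ is unchanged and equals $d_B(\sigma,\tau)$.

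For the $\bC$-action, write $\lambda = a + b\sqrt{-1}$ with $a = \mathrm{Re}\,\lambda$, so that $\sigma.\lambda = (\exp(-\sqrt{-1}\pi\lambda)Z, \{\mcP(\phi + a)\})$. The slicing of $\sigma.\lambda$ is that of $\sigma$ reindexed by the constant shift $a$, so the semistable factors of any $E$ are unchanged and each phase is shifted by $-a$; hence $\phi^\pm_{\sigma.\lambda}(E) = \phi^\pm_\sigma(E) - a$. The central charge is multiplied by $\exp(-\sqrt{-1}\pi\lambda)$, whose modulus is $\exp(\pi b)$, so $m_{\sigma.\lambda}(E) = \exp(\pi b)\,m_\sigma(E)$ for every $E$. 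Since $a$ and $b$ depend only on $\lambda$ and not on the stability condition, the additive shift $-a$ cancels in each difference $\phi^\pm_{\sigma.\lambda}(E) - \phi^\pm_{\tau.\lambda}(E)$, and the common factor $\exp(\pi b)$ cancels in each ratio $m_{\sigma.\lambda}(E)/m_{\tau.\lambda}(E)$. Therefore every term in the supremum defining $d_B(\sigma.\lambda, \tau.\lambda)$ agrees with the corresponding term for $d_B(\sigma,\tau)$, giving the equality.

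The only point requiring genuine care is the behavior of the Harder--Narasimhan filtration under $\Phi$, namely that $\Phi$ carries the filtration of $\Phi^{-1}(E)$ to that of $E$; this rests on $\Phi$ being a triangulated equivalence together with the uniqueness of Harder--Narasimhan filtrations. The remaining manipulations are formal, and I expect no essential obstacle.
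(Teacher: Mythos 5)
Your proof is correct. The paper states this lemma without proof (it is the standard argument), and your computation of how $\phi^\pm$ and $m$ transform under each action --- permutation of objects via $\Phi^{-1}$ for the $\Aut(\mcD)$-action, and a uniform phase shift by $-\mathrm{Re}\,\lambda$ together with a uniform mass scaling by $\exp(\pi\,\mathrm{Im}\,\lambda)$ for the $\bC$-action --- is exactly the intended justification, with the key point (uniqueness of Harder--Narasimhan filtrations under the exact equivalence $\Phi$) correctly identified and handled.
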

Let $\Stab_\Lambda^\dagger(\mcD)$ be a connected component of $\Stab_\Lambda(\mcD)$.
The quotient metric of $d_B$ by the $\bC$-action is well-defined and compatible with the quotient topology on $\Stab_\Lambda^\dagger(\mcD)/\bC$ (see \cite[\S3.1]{Kik-curvature}). 
Hence $\Stab_\Lambda^\dagger(\mcD)/\bC$ is metrizable. 
As in \cite[Proposition 4.1]{Oka}, $\Stab_\Lambda^\dagger(\mcD)/\bC$ also admits a structure of a complex manifold. 
Therefore, $\Stab_\Lambda^\dagger(\mcD)/\bC$ is second-countable, paracompact and $\sigma$-compact. 



\section{Thurston compactifications}\label{section-thurston-cpt}

In this section, we discuss Thurston compactifications of Teichm\"uller spaces and spaces of stability conditions.

\subsection{Teichm\"uller spaces} \label{sec:Thurston}
Fix a positive integer $g$. Let $\Sigma_g$ be a closed oriented surface of genus $g$.
A {\it marked Riemann surface of genus} $g$ is an orientation preserving diffeomorphism $f: \Sigma_g \to X$, where $X$ is a compact Riemann surface. Two marked Riemann surfaces $f_1: \Sigma_g \to X_1$ and $f_2: \Sigma_g \to X_2$ are said to be {\it equivalent} if there is a bi-holomorphic map $h: X_1 \iso X_2$ such that $f_2 \circ f^{-1}_1$ and $h$ are isotopic. The {\it Teichm\"uller space} $\mcT( \Sigma_g)$ consists of equivalence classes of marked Riemann surfaces of genus $g$.
In this paper, we will mainly use the case of $g=1$.

\begin{ex}\label{example:Teichmuler elliptic}
Assume that $g=1$. 
Fix the torus $\Sigma_1=T_2:=\bR^2/\bZ^2$.
For $\tau \in \bH$, we consider the orientation preserving diffeomorphism 
\[f_\tau: T_2 \to \bC/\bZ \oplus \tau \bZ, ~ (x,y) \mapsto x+\tau y. \]
Then we have the isomorphism 
\[ \xi:\bH \iso \mcT(T_2),~ \tau \mapsto [f_\tau].\]
\end{ex}

To construct the Thurston compactification of $\mcT(\Sigma_g)$, we need to consider infinite dimensional projective spaces.

\begin{defin}\label{def:proj}
For a set $\mcS$, we define the {\it projective space} $\bP^{\mcS}_{\geq 0}$ as the topological space
\[\bP^\mcS_{\geq 0}:=(\bR_{\geq 0}^\mcS\setminus\{0\})/\bR_{>0},\]
where the topology of $\bP^\mcS_{\geq 0}$ is defined by 
the quotient topology of the product topology on $\bR^{\mcS}_{\geq 0} \setminus \{0\}$. 
For a point $(x_\gamma)_{\gamma \in {\mcS}} \in \bR^S_{\geq 0}\setminus \{0\}$, denote the corresponding point in $\bP^\mcS_{\geq 0}$ by $[x_\gamma]_{\gamma \in \mcS}$.
Note that the topological space $\bP_{\geq 0}^\mcS$ is 
Hausdorff 
and  path-connected. 
If $\mcS$ is a countable set, then $\bP_{\geq 0}^\mcS$ is second-countable. 


\end{defin}
In the situation of Teichm{\"u}ller theory, we choose a set $\mcS$ as the set of free homotopy classes of simple closed curves in $\Sigma_g$. 
We explain the construction of a map 
$l \colon \mcT(\Sigma_g) \to \bP^\mcS_{\geq 0}$. 
Take $\gamma \in \mcS$ and $t=[f:\Sigma_g \to X] \in \mcT(\Sigma_g)$. 
When $g=1$, there is a geodesic in $X$ free homotopic to $f_*\gamma$, which is unique up to translations. When $g\geq 2$, there is a unique geodesic in $X$ free homotopic to $f_*\gamma$.
The length $l_t(\gamma)$ of a geodesic in $X$ free homotopic to $f_*\gamma$ depends only on the classes $t$ and $\gamma$. 
For an element $t \in \mcT(\Sigma_g)$, we put $l(t):=(l_t(\gamma))_{\gamma \in \mcS}$.
Then we obtain the continuous maps \[l: \mcT(\Sigma_g) \to \bR^S_{\geq 0}\setminus \{0\},\]  \[\bP l: \mcT(\Sigma_g) \to \bP^\mcS_{\geq 0}.\]

\begin{ex}[{\cite[Subsection 1.2]{FLPV}}]\label{ex:line segment}
Let $\gamma_1, \gamma_2 \in H_1(T_2,\bZ)$ be homology classes defined by 
    \[\gamma_1:=\left[\pi\left(\{(0,y) \in \bR^2 \mid y \in \bR  \}\right)\right], \]
    \[\gamma_2:=\left[\pi\left(\{(x,0) \in \bR^2 \mid x \in \bR \}\right)\right].\]
    Then $\gamma_1, \gamma_2$ form a symplectic basis of $H_1(T_2,\bZ)$, which satisfies  
    \[\gamma^2_1=\gamma^2_2=0, ~ \gamma_2 \cdot \gamma_1=1.\]
    There is the natural one-to-one correspondence between $\mcS$ and the set 
    \[\{c_1 \gamma_1 +  c_2 \gamma_2 \in H_1(T_2,\bZ) \mid \gcd(c_1, c_2)=1 \}\]
    and we identify these sets. 
    For $c_1 \gamma_1+c_2 \gamma_2 \in \mcS$, $\tau \in \bH$ and $t:=[f_\tau]$,
    we have 
    \begin{equation}\label{eq:length T_2}
    l_{t}(c_1\gamma_1+c_2\gamma_2)=|c_2+c_1\tau|. 
    \end{equation}
    Take a geodesic $L$ in $\bC/\bZ \oplus \tau \bZ$ whose free homotopy class is $c_1\gamma_1+c_2\gamma_2$. Take the lift $\Tilde{L}$ of $L$ in the universal cover $\bR^2$ of $T_2$. 
    Then the right hand side in (\ref{eq:length T_2}) is just the length of the line segment $\Tilde{L}$.
    \end{ex}

Let $\MCG(\Sigma_g)$ be the mapping class group of $\Sigma_g$.
For a mapping class $[\varphi] \in \MCG(\Sigma_g)$ and an element $t=[f:\Sigma_g \to X] \in \mcT(\Sigma_g)$, we put
\[[\varphi]\cdot t:=[f \circ \varphi^{-1}:\Sigma_g \to X] \in \mcT(\Sigma_g). \]
It defines an action of the mapping class group $\MCG(\Sigma_g)$ on the Teichm\"uller space $\mcT(\Sigma_g)$.
Note that the mapping class group $\MCG(\Sigma_g)$ naturally acts on the set $\mcS$.
For a mapping class $[\varphi] \in \MCG(\Sigma_g)$ and $x=(x_\gamma)_{\gamma \in S}$, we put \[[\varphi]\cdot x:= x \circ [\varphi]^{-1},  \]
where we regard $x$ and $[\varphi]^{-1}_*$ as the maps $x:S \to \bR^\mcS_{\geq 0}\setminus \{0\}$ and $[\varphi]^{-1}_*:\mcS \to \mcS$ respectively. Then the maps $l$ and $\bP l$ are $\MCG(\Sigma_g)$-equivariant.

\begin{thm}[{\cite[Theorem 1.2]{FLPV}}]\label{thm:Thurston comactification}
The following statements hold.
\begin{itemize}
    \item[(1)] The map $\bP l: \mcT(\Sigma_g) \to \bP^{S}_{\geq 0}$ is a homeomorphism onto its image.
    \item[(2)] Let $\overline{\mcT(\Sigma_g)}$ be the closure of $\bP l(\mcT(\Sigma_g))$ in $\bP^{S}_{\geq 0}$. Then $\overline{\mcT(\Sigma_g)}$ is compact. When $g=1$, $\overline{\mcT(\Sigma_g)}$ is homeomorphic to $\bH \cup \bP^1_{\bR}$. When $g \geq 2$, $\overline{\mcT(\Sigma_g)}$ is homeomorphic to the closed ball of dimension $6g-6$.
    \item[(3)] For $\gamma_1, \gamma_2 \in \mcS$, we define the geometric intersection number $i(\gamma_1, \gamma_2)$ of $\gamma_1$ and $\gamma_2$ as the infimum of the number of intersections of simple closed curves $L_1$ and $L_2$ whose free homotopy classes are $\gamma_1$ and $\gamma_2$ respectively. For $\gamma \in \mcS$, we define the function $i_*(\gamma):\mcS \to \bZ_{\geq 0}$ by $i_*(\gamma)(\delta):=i(\gamma, \delta)$ for $\delta \in \mcS$. Then we have 
    \[ \partial\mcT(\Sigma_g)=\overline{i_*(\mcS)}.  \]
    \item[(4)] The action of the mapping class group $\MCG(\Sigma_g)$ on $\mcT(\Sigma_g)$ is naturally extended to the action on $\overline{\mcT(\Sigma_g)}$ continuously.

\end{itemize}
\end{thm}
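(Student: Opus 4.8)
\section*{Proof proposal}

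The plan is to treat the two regimes of the statement separately: the interior embedding, part (1), which is a marked--length--spectrum rigidity statement, and the boundary behaviour, parts (2) and (3), which rest on Thurston's theory of measured foliations; part (4) is then a formal consequence of equivariance. Throughout I would keep the torus case $g=1$ in mind, where Example \ref{ex:line segment} makes everything explicit, and handle $g \geq 2$ by the general machinery.

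For part (1), continuity of $\bP l$ is immediate from continuity of each length function $l_t(\gamma)$ in $t$. For injectivity it suffices to show that the projective marked length spectrum $[l_t(\gamma)]_{\gamma \in \mcS}$ determines $t$. When $g=1$ and $t=[f_\tau]$, I would evaluate the formula $l_t(c_1\gamma_1+c_2\gamma_2)=|c_2+c_1\tau|$ on the four classes $(c_1,c_2)\in\{(0,1),(1,0),(1,1),(1,-1)\}$; the three ratios against $(0,1)$ give $|\tau|$, $|1+\tau|$, $|1-\tau|$, and from $|1+\tau|^2-|\tau|^2 = 1 + 2\,\mathrm{Re}\,\tau$ one recovers $\mathrm{Re}\,\tau$, then $\mathrm{Im}\,\tau>0$, so $\tau$ is determined. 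For $g\geq 2$ I would invoke the rigidity theorem that a closed hyperbolic surface is determined up to isometry by its marked length spectrum, and rule out a nontrivial global rescaling of the spectrum to obtain projective injectivity. To upgrade the continuous bijection to a homeomorphism onto its image, I would show that $\bP l$ is proper onto its image, i.e.\ that a sequence leaving every compact set of $\mcT(\Sigma_g)$ cannot converge in the image, using that some length function blows up or collapses as $t$ degenerates.

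For parts (2) and (3), the heart of the matter is to identify the boundary of the closure with the space $\mathcal{PMF}$ of projective measured foliations. I would first set up the intersection-number embedding $\bP i\colon \mathcal{PMF}\to\bP^\mcS_{\geq 0}$, $\mathcal{F}\mapsto[i(\gamma,\mathcal{F})]_{\gamma}$, and the inclusion $i_*\colon \mcS\hookrightarrow\mathcal{PMF}$ realising simple closed curves as foliations carrying the counting measure. The key analytic input is that if $t_n\in\mcT(\Sigma_g)$ leaves every compact set then, after passing to a subsequence, the rescaled lengths $l_{t_n}(\gamma)/\lambda_n$ converge, for a suitable normalising sequence $\lambda_n\to\infty$, to $i(\gamma,\mathcal{F})$ for some $\mathcal{F}\in\mathcal{PMF}$, and conversely every such $\mathcal{F}$ arises this way. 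This yields $\overline{\mcT(\Sigma_g)} = \bP l(\mcT(\Sigma_g)) \sqcup \bP i(\mathcal{PMF})$ and identifies the boundary as $\bP i(\mathcal{PMF}) = \overline{i_*(\mcS)}$, the last equality because simple closed curves are projectively dense in $\mathcal{PMF}$. For $g=1$ this is entirely explicit: letting $\tau\to r\in\bR\cup\{\infty\}$ in $l_t(c_1\gamma_1+c_2\gamma_2)=|c_2+c_1\tau|$ produces boundary points $[\,|c_2+c_1r|\,]_{(c_1,c_2)}$, which sweep out a copy of $\bP^1_{\bR}$, so that $\mcT(T_2)\cup\bP^1_{\bR}\cong\bH\cup\bP^1_{\bR}$ is the closed disk. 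For $g\geq 2$ one identifies $\mathcal{PMF}\cong S^{6g-7}$ and glues it to the open ball $\mcT(\Sigma_g)\cong\bR^{6g-6}$ to obtain the closed ball.

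The main obstacle is precisely this convergence-and-gluing step: proving that rescaled hyperbolic length functions converge projectively to intersection functions of measured foliations, and that the resulting union is a manifold with boundary homeomorphic to a closed ball. This requires the full force of Thurston's measured-foliation theory---the train-track parametrisation of $\mathcal{PMF}$ as a sphere and the continuity of the length-versus-intersection comparison---and is the only genuinely deep ingredient; for $g=1$ it is bypassed by the explicit formula. Finally, part (4) is formal: $\MCG(\Sigma_g)$ acts on $\mcS$, hence on $\bP^\mcS_{\geq 0}$ by permuting homogeneous coordinates, and since $\bP l$ is $\MCG(\Sigma_g)$-equivariant the action preserves $\bP l(\mcT(\Sigma_g))$ and therefore extends continuously to its closure.
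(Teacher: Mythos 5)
The paper does not prove this statement at all: it is quoted verbatim from \cite[Theorem 1.2]{FLPV} (Fathi--Laudenbach--Po\'enaru, \emph{Travaux de Thurston}), and your outline --- marked-length-spectrum injectivity plus properness for (1), convergence of rescaled length functions to intersection numbers with measured foliations and the identification $\mathcal{PMF}\cong S^{6g-7}$ for (2)--(3), and formal equivariance for (4) --- is exactly the strategy of that reference, with the $g=1$ case made explicit via the formula of Example \ref{ex:line segment} just as the paper does. Your sketch is correct as a roadmap, with the deep inputs (density of weighted simple closed curves in $\mathcal{MF}$, the train-track sphere parametrisation) correctly identified and appropriately deferred.
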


In the case of $g=1$, the function $i(-,-): \mcS \times \mcS \to \bZ_{\geq 0}$ can be described as follows:

\begin{ex}[{\cite[Subsection 1.2]{FLPV}}]
Assume that $g=1$ and $\Sigma_1=T_2$.
For $a\gamma_1+b\gamma_2, c\gamma_1+d\gamma_2 \in \mcS$, we have 
\[i(a\gamma_1+b\gamma_2, c\gamma_1+d\gamma_2)=|bc-ad|.\]
\end{ex}

\subsection{Spaces of stability conditions}
In this subsection, we introduce problems about Thurston compactifications of spaces of stability conditions following \cite{bdl20}.
We use the same setting as Subsection 2.1.
Let $\mcS$ be a subset of the set of isomorphism classes of objects in $\mcD$. By Definition \ref{def:proj}, we obtain the projective space $\bP^{\mcS}_{\geq0}$. 

\begin{defin}[{\cite[Subsection 1.1]{bdl20}}]\label{
def-pm}
We define the continuous map 
\[m:\Stab_{\Lambda}(\mcD) \to \bR^{\mcS}_{\geq 0}\setminus \{0\}, \ \sigma \mapsto (m_\sigma(E))_{E \in \mcS}. \]
This map induces the continuous map 
\[
\bP m:\Stab_{\Lambda}(\mcD)/\bC \to \bP^{\mcS}_{\geq 0}.
\]
\end{defin}

As an analogue of Theorem \ref{thm:Thurston comactification}, we consider the following questions.

\begin{ques}[{\cite[Section 1]{bdl20}}]\label{ques:Thurston compactification}
\begin{itemize}
    \item[(0)] Is the map 
    \[\bP m: \Stab_{\Lambda}(\mcD)/\bC \to \bP^{\mcS}_{\geq 0}\]
    injective?
    \item[(1)]Is the map 
    \[\bP m: \Stab_{\Lambda}(\mcD)/\bC \to \bP^{\mcS}_{\geq 0}\]
    a homeomorphism onto its image?
    \item[(2)] Assume that the question (1) has an affirmative answer. 
    Denote  the closure of $\bP m(\Stab_{\Lambda}(\mcD)/\bC)$ in $\bP^{\mcS}_{\geq 0}$ by $\overline{\Stab_{\Lambda}(\mcD)/\bC}$. 
    Is the closure $\overline{\Stab_{\Lambda}(\mcD)/\bC}$ compact?
    \item[(3)] Assume that the questions (1) and (2) have affirmative answers. 
    Set $\partial\Stab_{\Lambda}(\mcD)/\bC:=\left(\overline{\Stab_{\Lambda}(\mcD)/\bC}\right)\backslash\bP m\left(\Stab_{\Lambda}(\mcD)/\bC\right)$. 
    Is there a map $i:\mcS \times \mcS \to \bR_{\geq 0}$ such that 
    \[\partial\Stab_{\Lambda}(\mcD)/\bC=\overline{i_*(\mcS)}?\]    Here, for a map $i:\mcS \times \mcS \to \bR_{\geq 0}$ and an element $A \in \mcS$, we define the map 
    \[i_*(A):\mcS \to \bR_{\geq 0}\]
    by $i_*(A)(E):=i(A,E)$ for $E \in \mcS$. 
\end{itemize}
\end{ques}

Assume that $\Aut(\mcD)$ preserves the set $\mcS$.
For an autoequivalence $\Phi \in \Aut(\mcD)$ and $x=(x_E)_{E \in S}$, we put \[\Phi\cdot x:= x \circ \Phi^{-1},  \]
where we regard $x$ and $\Phi^{-1}_*$ as the maps $x:\mcS \to \bR^\mcS_{\geq 0}\setminus \{0\}$ and $\Phi^{-1}_*:\mcS \to \mcS$ respectively. Then the maps $m$ and $\bP m$ are $\Aut(\mcD)$-equivariant.

\begin{rmk}
Assume that Question \ref{ques:Thurston compactification} (1), (2) and (3) have affirmative answers. Then the action of $\Aut(\mcD)$ on $\Stab_{\Lambda}(\mcD)/\bC$ is naturally extended to the action of $\Aut(\mcD)$ on $\overline{\Stab_{\Lambda}(\mcD)/\bC}$ continuously.
\end{rmk}

\section{Thurston compactification of 
the space of geometric stability conditions}
\label{sec:Geo}
Let $C$ be a smooth projective curve. 
In this section, we construct Thurston compactification of the space of geometric stability conditions on $C$.
We also describe the image explicitly. 

\subsection{Basics}
Let $C$ be a smooth projective curve over $\bC$.
Consider the surjective homomorphism $\ch: K(C) \to H^{2*}(C,\bZ)$. Denote the space of stability conditions on $D^b(C)$ with respect to $(H^{2*}(C,\bZ), \ch)$ by $\Stab(C)$.  
A stability condition $\sigma \in \Stab(C)$ is {\it geometric} if all structure sheaves of points are $\sigma$-stable of the same phase. 
The set of geometric stability conditions is denoted by $\Geo(C)$. 

For $\beta+\sqrt{-1}\alpha\in\bH$, 
one defines 
a group homomorphism $Z_{\beta, \alpha} (\text{or } Z_{\beta+\sqrt{-1}\alpha}):H^{2*}(C,\bZ) \to\bC$ by
\[
Z_{\beta, \alpha}(r,d):=
-d+(\beta+\sqrt{-1}\alpha)r. 
\]
Then we have $\sigma_{\beta,\alpha}:=(Z_{\beta,\alpha},\Coh(C)) \in \Geo(C)$.
By \cite[Theorerm 2.7]{Mac} and arguments in \cite[Section 3]{BMW},
there exists an isomorphism
\begin{equation}\label{isomorphism:H vs Stab}
    \bH\times\bC\xrightarrow{\sim}\Geo(C);~~(\beta+\sqrt{-1}\alpha,\lambda)\mapsto\sigma_{\beta,\alpha}.\lambda. 
\end{equation}

The following is obtained by the isomorphism (\ref{isomorphism:H vs Stab}):  
\begin{lem}\label{curv-stab-determined-by-central}
Let $C$ be a smooth projective curve. 
For $\sigma_1,\sigma_2\in\Geo(C)$, 
$\overline{\sigma}_1=\overline{\sigma}_2$ in $\Geo(C)/\bC$ if and only if 
$Z_{\sigma_1}=\exp(-\sqrt{-1}\pi\lambda) Z_{\sigma_2}$ for some $\lambda\in\bC$. 
\end{lem}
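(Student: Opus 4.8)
The plan is to reduce the whole statement to the explicit parametrization of $\Geo(C)$ given by the isomorphism (\ref{isomorphism:H vs Stab}) and then to read off both implications from the values of the central charges on the two generators $(1,0)$ and $(0,1)$ of $\Lambda=H^{2*}(C,\bZ)$. First I would record how the $\bC$-action looks under (\ref{isomorphism:H vs Stab}). Since the right $\bC$-action is additive, $(\sigma_{\beta,\alpha}.\lambda).\mu=\sigma_{\beta,\alpha}.(\lambda+\mu)$, the isomorphism (\ref{isomorphism:H vs Stab}) is $\bC$-equivariant for the translation action $\mu\cdot(\tau,\lambda)=(\tau,\lambda+\mu)$ on the second factor. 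Hence the projection $\bH\times\bC\to\bH$ identifies $\Geo(C)/\bC$ with $\bH$, and $\overline{\sigma}_1=\overline{\sigma}_2$ in $\Geo(C)/\bC$ becomes equivalent to the equality $\tau_1=\tau_2$ of the $\bH$-coordinates, where I write $\sigma_i=\sigma_{\beta_i,\alpha_i}.\lambda_i$ and $\tau_i=\beta_i+\sqrt{-1}\alpha_i$.

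For the forward implication nothing is needed beyond the definition of the $\bC$-action: if $\overline{\sigma}_1=\overline{\sigma}_2$ then $\sigma_1=\sigma_2.\lambda$ for some $\lambda\in\bC$, and the formula for $\sigma.\lambda$ immediately gives $Z_{\sigma_1}=\exp(-\sqrt{-1}\pi\lambda)Z_{\sigma_2}$. The substance lies in the converse.

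For the converse, suppose $Z_{\sigma_1}=\exp(-\sqrt{-1}\pi\mu)Z_{\sigma_2}$. Using the action formula, $Z_{\sigma_i}=\exp(-\sqrt{-1}\pi\lambda_i)Z_{\tau_i}$ with $Z_{\tau_i}(r,d)=-d+\tau_i r$, so the hypothesis reads
\[
\exp(-\sqrt{-1}\pi\lambda_1)\,Z_{\tau_1}=\exp\!\bigl(-\sqrt{-1}\pi(\mu+\lambda_2)\bigr)\,Z_{\tau_2}.
\]
Now I evaluate both sides on the two generators of $\Lambda$. On $(0,1)$ one has $Z_{\tau_i}(0,1)=-1$ independently of $\tau_i$, so the displayed identity forces $\exp(-\sqrt{-1}\pi\lambda_1)=\exp(-\sqrt{-1}\pi(\mu+\lambda_2))$; evaluating on $(1,0)$, where $Z_{\tau_i}(1,0)=\tau_i$, and cancelling this common nonzero scalar yields $\tau_1=\tau_2$. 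By the first paragraph this is exactly $\overline{\sigma}_1=\overline{\sigma}_2$, which completes the argument.

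The only point that requires care — and the reason the statement holds in the geometric setting — is the normalization $Z_{\beta,\alpha}(0,1)=-1$ coming from the class of a point (equivalently, from demanding that all skyscraper sheaves be $\sigma$-stable of a fixed phase). It is precisely this $\tau$-independent value on $(0,1)$ that lets me strip off the ambiguous $\widetilde{\GL}_+(2,\bR)$-scalar and isolate $\tau$; without such a normalization the central charge alone would not pin down the point of $\Geo(C)/\bC$, since the naive central-charge map on $\Geo(C)$ fails to be injective (e.g.\ $\sigma$ and $\sigma.2$ share the same central charge while differing by the shift functor). I therefore expect the main, albeit mild, obstacle to be organizing the computation so that this normalization is used cleanly rather than accidentally obscured by the $\lambda_i$.
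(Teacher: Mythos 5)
Your argument is correct and is exactly the route the paper intends: the lemma is stated there as an immediate consequence of the parametrization $\bH\times\bC\iso\Geo(C)$, and your evaluation of the central charges on the classes of $\mcO_p$ and $\mcO_C$ (i.e.\ on $(0,1)$ and $(1,0)$) just makes explicit why that parametrization yields the converse implication. No gaps; the observation that the normalization $Z_{\beta,\alpha}(0,1)=-1$ is what lets you cancel the scalar and recover $\tau$ is the right point to emphasize.
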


In the case of positive genus, each stability condition is geometric. 
\begin{thm}[{\cite[Theorerm 2.7]{Mac}}]
Let $C$ be a smooth projective curve of positive genus. 
Then we have $\Stab(C)=\Geo(C)$. 
\end{thm}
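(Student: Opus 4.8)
The plan is to show that every $\sigma=(Z,\mcP)\in\Stab(C)$ lies in the $\widetilde{\mathrm{GL}}_+(2,\bR)$-orbit of one of the standard stability conditions $\sigma_{\beta,\alpha}$; since the latter are geometric and geometricity (all $\mcO_x$ being stable of a common phase) is clearly preserved by the $\widetilde{\mathrm{GL}}_+(2,\bR)$-action, this would give $\Stab(C)=\Geo(C)$. First I would exploit that the lattice here is $H^{2*}(C,\bZ)\cong\bZ^{\oplus 2}$, so the central charge has the form $Z(r,d)=z_0r+z_1d$ for $z_0=Z(1,0)$, $z_1=Z(0,1)\in\bC$, and the support property forces $z_0,z_1$ to be $\bR$-linearly independent, as otherwise $Z$ would map the rank-two lattice into a real line, incompatible with the existence of a slicing. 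Post-composing $Z$ with a suitable $\bR$-linear automorphism of $\bC\cong\bR^2$ from $\mathrm{GL}_+(2,\bR)$ — that is, acting by $\widetilde{\mathrm{GL}}_+(2,\bR)$ — I may therefore normalize the central charge to the standard form $Z=Z_{\beta,\alpha}$ with $\beta+\sqrt{-1}\alpha\in\bH$, in particular with $Z(\mcO_x)=-1$ for every point $x$. It then remains to prove that for this $\sigma$ the heart $\mcP((0,1])$ coincides with $\Coh(C)$, which forces $\sigma=\sigma_{\beta,\alpha}$ and in particular makes it geometric.

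The key step, and the main obstacle, is to show that each skyscraper sheaf $\mcO_x$ is $\sigma$-stable. Because all $\mcO_x$ share the class $(0,1)$ and hence the same central charge $-1$, once they are known to be semistable they automatically all have the same phase, so the entire content is stability. Here I would combine three ingredients. First, boundedness coming from the support property: only finitely many classes in $H^{2*}(C,\bZ)$ can occur among the Harder--Narasimhan and Jordan--H\"older factors of objects of class $(0,1)$. Second, the homological rigidity of point sheaves, namely $\End(\mcO_x)=\bC$ and $\Hom(\mcO_x,\mcO_y)=\Ext^1(\mcO_x,\mcO_y)=0$ for $x\ne y$, together with the fact that for $g\ge 1$ there are no exceptional or spherical objects in $D^b(C)$, since $\chi(E,E)=\rk(E)^2(1-g)\le 0$ for every object $E$; this is precisely where the positive genus hypothesis enters, and it is exactly what fails for $\bP^1$, where $\chi(E,E)=\rk(E)^2$ admits exceptional line bundles. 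Third, the connectedness of the family $\{\mcO_x\}_{x\in C}$ together with openness of (semi)stability: the locus of $x$ with $\mcO_x$ stable is open, and an analysis of a hypothetical destabilizing $\sigma$-stable factor — whose class is constrained by the first ingredient and whose $\Hom$-groups with nearby $\mcO_y$ are constrained by the second — shows this locus is also closed and nonempty, hence all of $C$. The delicate point is controlling the stable factors uniformly in $x$, and I expect essentially all of the difficulty of the theorem to be concentrated here.

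Once all $\mcO_x$ are $\sigma$-stable of a common phase, I would apply the $\bC$-action to normalize that phase to $1$, so that $\mcO_x\in\mcP(1)$ for all $x$. To identify the heart $\mcA:=\mcP((0,1])$ with $\Coh(C)$, I would argue that $\mcA$ and $\Coh(C)$ are two bounded $t$-structures sharing enough objects to coincide: the point sheaves generate the torsion subcategory, and for $n\gg 0$ the line bundles $\mcO_C(n)$ are $\sigma$-stable of phase in $(0,1)$ by a Serre-vanishing and slope-asymptotics argument, so that torsion sheaves together with these line bundles generate $\Coh(C)$ and lie in $\mcA$; comparing the two hearts then forces $\mcA=\Coh(C)$. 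Finally $Z|_{\Coh(C)}=Z_{\beta,\alpha}$ is exactly the standard stability function, so $\sigma=\sigma_{\beta,\alpha}$ up to the $\bC$-action and is in particular geometric, yielding $\Stab(C)=\Geo(C)$.
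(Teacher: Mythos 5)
This statement is imported by the paper from \cite{Mac} (Theorem 2.7 there) without proof, so the only meaningful comparison is with Macr\`i's original argument. Your skeleton is the right one and matches it: normalize the central charge by the $\widetilde{\mathrm{GL}}_+(2,\bR)$-action, show that every skyscraper sheaf (and every line bundle) is $\sigma$-stable for an arbitrary $\sigma$, and then identify the heart $\mcP((0,1])$ with $\Coh(C)$. You have also correctly located where positive genus enters: $\chi(E,E)=\rk(E)^2(1-g)\le 0$, so $D^b(C)$ has no exceptional or spherical objects, in contrast with $\bP^1$.

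The problem is that the crucial step --- stability of the $\mcO_x$ --- is a program rather than a proof, and the program you sketch has concrete holes. First, your open--closed argument over $x\in C$ has no starting point: you give no reason why even a single $\mcO_x$ should be $\sigma$-stable for an abstract $\sigma$, and without a nonempty base the connectedness of $C$ yields nothing; for $g\ge 2$ there is no symmetry moving one point to another. Second, openness and closedness of the locus $\{x\in C \mid \mcO_x \text{ is } \sigma\text{-stable}\}$ is itself a nontrivial semicontinuity statement about Harder--Narasimhan filtrations in a family of objects for an arbitrary stability condition, which you assert but do not establish; the phrase ``an analysis of a hypothetical destabilizing $\sigma$-stable factor \dots shows this locus is also closed and nonempty'' is essentially a restatement of the theorem. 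Third, your claim that the support property alone forces $Z(1,0)$ and $Z(0,1)$ to be $\bR$-linearly independent is also glossed: to violate the support property one must exhibit \emph{semistable} objects of the offending classes, which again presupposes knowing which objects are semistable. The actual proof in \cite{Mac} avoids varying $x$ entirely: one fixes a single destabilizing triangle $A\to\mcO_x\to B$ coming from the Harder--Narasimhan (or Jordan--H\"older) filtration, so that $\Hom^{\le 0}(A,B)=0$, and applies the inequality of Gorodentsev--Kuleshov--Rudakov together with $\hom(\mcO_x,\mcO_x)=1$, Serre duality (using $\deg\omega_C\ge 0$), and $\chi(E,E)\le 0$ to reach a contradiction; the same argument gives stability of all line bundles $L$, and the resulting phase inequalities $\phi_\sigma(L)<\phi_\sigma(\mcO_x)<\phi_\sigma(L)+1$ are what produce the nondegeneracy of $Z$ and the identification of the heart. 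Your proposal would need to be completed along these lines; as written, the step carrying all the difficulty is missing.
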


\vspace{1mm}
For the projective line $\bP^1$, the subset $\Geo(\bP^1)$ is strictly contained in $\Stab(\bP^1)$, see \S \ref{subsection-exp-im-p1}. 

\subsection{Compactifications}
We fix a set $\mcS \subset \Ob\left(D^b(C) \right)$ 
satisfying the following conditions: 
\begin{assump}\label{assume:S}
The set $\mcS \subset \Ob\left(D^b(C) \right)$ satisfies the following conditions:   
\begin{enumerate}
    \item The structure sheaf $\mcO_C$ is in $\mcS$.
    \item There are points $p, q \in C$ such that 
    $\mcO_p, \mcO_C(-q) \in \mcS$. 
     \item Every object in $\mcS$ is slope stable. 
\end{enumerate}
\end{assump} \label{eq:Ssph}

First, we prove the injectivity of the map 
$\bP m \colon \Geo(C)/\bC \to \bP^\mcS_{\geq 0}$ in Definition \ref{
def-pm}: 
\begin{prop} \label{prop:Geo-inj}
The map 
\begin{equation} \label{eq:mGeo}
\bP m \colon \Geo(C)/\bC \to \bP^\mcS_{\geq 0} 
\end{equation}
is injective. 
\end{prop}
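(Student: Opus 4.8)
The plan is to exploit the explicit parametrization \eqref{isomorphism:H vs Stab} and reduce injectivity to an elementary computation with the three distinguished objects of Assumption \ref{assume:S}.

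First I would use the isomorphism \eqref{isomorphism:H vs Stab} together with Lemma \ref{curv-stab-determined-by-central} to represent each class in $\Geo(C)/\bC$ by a stability condition of the standard form $\sigma_{\beta,\alpha}=(Z_{\beta,\alpha},\Coh(C))$ with $\beta+\sqrt{-1}\alpha\in\bH$; since the $\bC$-action only affects the $\lambda$-coordinate, this representative is unique. Thus it suffices to show that whenever $\sigma_1=\sigma_{\beta_1,\alpha_1}$ and $\sigma_2=\sigma_{\beta_2,\alpha_2}$ satisfy $\bP m(\overline{\sigma}_1)=\bP m(\overline{\sigma}_2)$, one has $(\beta_1,\alpha_1)=(\beta_2,\alpha_2)$.

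Next I would record that the mass simplifies on $\mcS$. By Assumption \ref{assume:S}(3) every $E\in\mcS$ is slope stable, hence $\sigma_{\beta,\alpha}$-semistable for every geometric stability condition, because the semistable objects of a geometric stability condition on a curve are precisely the slope-semistable sheaves up to shift. Its Harder--Narasimhan filtration is therefore trivial and $m_{\sigma_{\beta,\alpha}}(E)=|Z_{\beta,\alpha}(E)|$. Writing $\ch(E)=(r,d)$, this reads $m_{\sigma_{\beta,\alpha}}(E)=\bigl|(\beta r-d)+\sqrt{-1}\,\alpha r\bigr|$.

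The computation then proceeds as follows. The hypothesis $\bP m(\overline{\sigma}_1)=\bP m(\overline{\sigma}_2)$ means there is a constant $c>0$ with $m_{\sigma_1}(E)=c\,m_{\sigma_2}(E)$ for all $E\in\mcS$. Evaluating on $\mcO_p$, whose Chern character is $(0,1)$, gives $m_{\sigma_i}(\mcO_p)=1$ and hence $c=1$. Evaluating on $\mcO_C$, with Chern character $(1,0)$, gives $m_{\sigma_i}(\mcO_C)=\sqrt{\beta_i^2+\alpha_i^2}$, so that $\beta_1^2+\alpha_1^2=\beta_2^2+\alpha_2^2$. Finally, evaluating on $\mcO_C(-q)$, with Chern character $(1,-1)$, gives $m_{\sigma_i}(\mcO_C(-q))=\sqrt{(1+\beta_i)^2+\alpha_i^2}$; subtracting the relation coming from $\mcO_C$ yields $\beta_1=\beta_2$, and then $\alpha_1=\alpha_2$ since both are positive. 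This gives $(\beta_1,\alpha_1)=(\beta_2,\alpha_2)$ and proves injectivity.

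The only non-formal ingredient is the identity $m_\sigma(E)=|Z_\sigma(E)|$ for $E\in\mcS$, that is, the fact that a slope-stable sheaf remains semistable in every geometric stability condition. I expect this to be the main point to pin down, as it is where the geometry of the curve enters, the remainder being the linear algebra of the three central-charge values above.
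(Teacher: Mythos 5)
Your argument is correct and is essentially the paper's own proof: both reduce to the standard representatives $\sigma_{\beta,\alpha}$ via the isomorphism \eqref{isomorphism:H vs Stab}, normalize the projective ambiguity using $m_{\sigma_{\beta,\alpha}}(\mcO_p)=1$, and then solve the two equations $\alpha^2+\beta^2=\alpha'^2+\beta'^2$ and $\alpha^2+(\beta+1)^2=\alpha'^2+(\beta'+1)^2$ coming from $\mcO_C$ and $\mcO_C(-q)$. The identity $m_{\sigma_{\beta,\alpha}}(E)=|Z_{\beta,\alpha}(E)|$ for slope-stable $E$, which you correctly flag as the one non-formal input, is exactly the use the paper makes of Assumption \ref{assume:S}(3).
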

\begin{proof}
By the isomorphism (\ref{isomorphism:H vs Stab}), we have the isomorphism
\begin{equation} \label{eq:H-Geo}
\bH \xrightarrow{\sim} \Geo(C)/\bC, 
\quad \beta+\sqrt{-1}\alpha \mapsto 
\overline{\sigma}_{\beta, \alpha}.
\end{equation}
Take points 
$\beta+\sqrt{-1}\alpha, \beta'+\sqrt{-1}\alpha' \in \bH$ and assume that 
$\bP m(\overline{\sigma}_{\beta, \alpha})
=\bP m(\overline{\sigma}_{\beta', \alpha'})$. 
To prove the injectivity of $\bP m$, it is enough to prove that $\beta+\sqrt{-1}\alpha=\beta'+\sqrt{-1}\alpha'$. 

By Assumption \ref{assume:S} (1), (2), 
we have $\mcO_C, \mcO_C(-q), \mcO_p \in \mcS$ for some 
points $p, q \in C$. 
Since the equality
\[m_{\sigma_{\beta,\alpha}}(\mcO_{p})=1=m_{\sigma_{\beta', \alpha'}}(\mcO_p) \]
holds,
we have $m_{\sigma_{\beta,\alpha}}(E)=m_{\sigma_{\beta',\alpha'}(E)}$ for all $E \in \mcS$.
In particular, we have
\[
m_{\sigma_{\beta, \alpha}}\left(\mcO_C(-q) \right)
=m_{\sigma_{\beta', \alpha'}}\left(\mcO_C(-q) \right), \quad 
m_{\sigma_{\beta, \alpha}}\left(\mcO_C \right)
=m_{\sigma_{\beta', \alpha'}}\left(\mcO_C \right).
\]
Therefore, we obtain 
\[
\alpha^2+(\beta+1)^2=\alpha'^2+(\beta'+1)^2, \quad 
\alpha^2+\beta^2=\alpha'^2+\beta'^2. 
\]
By solving these equations, we get 
$\beta+\sqrt{-1}\alpha=\beta'+\sqrt{-1}\alpha'$ 
as required. 



\end{proof}


In the following, 
we describe the closure of 
$\bP m(\Geo(C)/\bC) \subset \bP^{\mcS}_{\geq 0}$. 
Let $D$ (resp. $\overline{D}$) 
be an open (resp. a closed) unit disc, 
and consider the homeomorphism 
\begin{equation} \label{eq:homeo-H}
\bH \xrightarrow{\sim} D, 
\quad z \mapsto \frac{z-\sqrt{-1}}{z+\sqrt{-1}}. 
\end{equation}
The above map (\ref{eq:homeo-H}) extends to the homeomorphism 
\begin{equation} \label{eq:homeo-bH}
    \overline{\bH} \xrightarrow{\sim} \overline{D}, 
\end{equation}
where we put 
$\overline{\bH} \coloneqq \bH \cup \bR \cup \{\infty\}$. 

\begin{lem} \label{lem:defbarm}
The map (\ref{eq:mGeo}) extends to a continuous map 
\begin{equation} \label{eq:barm}
\overline{\bP m} \colon \overline{\bH} \to \bP^{\mcS}_{\geq 0}. 
\end{equation} 
Moreover, it is injective and homeomorphic onto the image. 
\end{lem}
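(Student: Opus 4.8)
The plan is to make the map (\ref{eq:mGeo}) totally explicit and then extend it to the boundary by the same formula. By Assumption \ref{assume:S}(3) every $E\in\mcS$ is slope-stable, hence $\sigma_{\beta,\alpha}$-stable for all $(\beta,\alpha)\in\bH$ under the identification (\ref{isomorphism:H vs Stab}) of $\Geo(C)$; its Harder--Narasimhan filtration is therefore trivial and
\[
m_{\sigma_{\beta,\alpha}}(E)=|Z_{\beta,\alpha}(E)|=|-d_E+(\beta+\sqrt{-1}\alpha)r_E|,
\]
where $(r_E,d_E)=\ch(E)$. Via (\ref{eq:H-Geo}) the map (\ref{eq:mGeo}) becomes
\[
\bH\to\bP^\mcS_{\geq 0},\quad z=\beta+\sqrt{-1}\alpha\mapsto[\,|-d_E+zr_E|\,]_{E\in\mcS}.
\]
The right-hand side makes literal sense for every finite $z\in\bH\cup\bR$ (the $\mcO_p$-coordinate is $|-1|=1$, so the tuple is never zero), so I would use it to define $\overline{\bP m}$ on $\bH\cup\bR$, and set $\overline{\bP m}(\infty):=[\,|r_E|\,]_{E\in\mcS}$, which is a legitimate point of $\bP^\mcS_{\geq 0}$ because $r_{\mcO_C}=1$.

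For continuity I would exhibit two continuous, nowhere-vanishing lifts into $\bR^\mcS_{\geq 0}\setminus\{0\}$ and use that continuity into a product is checked coordinatewise. On the open set $\overline{\bH}\setminus\{\infty\}=\bH\cup\bR$ the lift $z\mapsto(|-d_E+zr_E|)_E$ is continuous and has constant $\mcO_p$-coordinate $1$; on the open set $\overline{\bH}\setminus\{0\}$ the lift $z\mapsto(|{-d_E}/z+r_E|)_E$, with value $(|r_E|)_E$ at $z=\infty$, is continuous and has constant $\mcO_C$-coordinate $1$. On the overlap the two lifts differ by the positive scalar $1/|z|$, hence descend to the same map into $\bP^\mcS_{\geq 0}$; by the pasting lemma $\overline{\bP m}$ is continuous on all of $\overline{\bH}$.

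Injectivity on the interior is Proposition \ref{prop:Geo-inj}; for the boundary and across it I would again test against the objects $\mcO_p,\mcO_C,\mcO_C(-q)$ with $(r,d)=(0,1),(1,0),(1,-1)$. If two finite points $z,z'\in\overline{\bH}$ satisfy $\overline{\bP m}(z)=\overline{\bP m}(z')$, then normalizing by the (everywhere nonzero) $\mcO_p$-coordinate and reading off $\mcO_C$ and $\mcO_C(-q)$ gives $|z|=|z'|$ and $|z+1|=|z'+1|$. Comparing the squared moduli yields $\mathrm{Re}(z)=\mathrm{Re}(z')$ and hence $|\mathrm{Im}(z)|=|\mathrm{Im}(z')|$; since both imaginary parts are $\geq 0$ on $\overline{\bH}$, we conclude $z=z'$. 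This single computation rules out all coincidences among finite points, in particular any interior--boundary collision. Finally $\infty$ is separated from every finite point because its $\mcO_p$-coordinate vanishes whereas theirs does not, a scale-invariant distinction in $\bP^\mcS_{\geq 0}$.

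The conclusion is then formal: by (\ref{eq:homeo-bH}) the domain $\overline{\bH}\cong\overline{D}$ is compact and $\bP^\mcS_{\geq 0}$ is Hausdorff, so the continuous injection $\overline{\bP m}$ is automatically a homeomorphism onto its image. I expect the only real subtlety to be the point $\infty$: one must normalize (divide by $|z|$) to obtain a finite nonzero limit, and it is the two-chart lift above that makes $\overline{\bP m}$ visibly continuous there. Since $\mcS$ may be uncountable, I would avoid sequential arguments throughout and rely on the coordinatewise characterization of continuity into the product, as above.
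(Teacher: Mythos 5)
Your proof is correct and follows essentially the same route as the paper's: the same explicit formula for the extension (including $\overline{\bP m}(\infty)=[\rk(E)]_{E\in\mcS}$), the same test objects $\mcO_p,\mcO_C,\mcO_C(-q)$ for injectivity, and the same compact-domain-to-Hausdorff-target argument for being a homeomorphism onto the image. The only difference is that you spell out details the paper leaves implicit --- the two-chart verification of continuity at $\infty$ and the separation of $\infty$ from the finite points of $\overline{\bH}$ via the vanishing of the $\mcO_p$-coordinate --- which are welcome but do not change the argument.
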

\begin{proof}
Under the identification (\ref{eq:H-Geo}), 
the map (\ref{eq:mGeo}) becomes 
\begin{equation} \label{eq:mH-Geo}
\bH \to \bP^{\mcS}_{\geq 0}, \quad 
\beta+\sqrt{-1}\alpha \mapsto 
\left[|Z_{\beta, \alpha}(E)| \right]_{E \in \mcS}. 
\end{equation}
Note that we have used Assumption \ref{assume:S} (3) 
for the equality 
$m_{\sigma_{\beta, \alpha}}(E)=|Z_{\beta, \alpha}(E)|$ 
for $E \in \mcS$. 
Explicitly, we have 
\[
|Z_{\beta, \alpha}(E)|=\left| 
-\deg(E)+(\beta+\sqrt{-1}\alpha)\rk(E)
\right|. 
\]
This map (\ref{eq:mH-Geo}) naturally extends to a continuous map 
\begin{equation} 
\overline{\bP m} \colon \overline{\bH} \to \bP^{\mcS}_{\geq 0}
\end{equation}
by sending $\beta+\sqrt{-1}\alpha \in \bH \cup \bR$ to 
$[\left| 
-\deg(E)+(\beta+\sqrt{-1}\alpha)\rk(E)
\right|]_{E \in \mcS} \in \bP^{\mcS}_{\geq 0}$, 
and $\infty \in \overline{\bH}$ to 
$[\rk(E)]_{E \in \mcS}$. 

By the same proof as in Proposition \ref{prop:Geo-inj}, 
we can see that the morphism $\overline{{\bP m}}$ is injective. 
Since $\overline{\bH}$ is compact and 
$\bP^{\mcS}_{\geq 0}$ is Hausdorff, 
$\overline{{\bP m}}$ is a homeomorphism onto its image. 
\end{proof}

\begin{thm} \label{thm:closure}
The closure of $\bP m(\Geo(C)/\bC) \subset \bP^\mcS_{\geq 0}$ is homeomorphic to the closed unit disc $\overline{D}$. 
In particular, it is compact. 
\end{thm}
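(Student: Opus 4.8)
The plan is to deduce the theorem formally from Lemma \ref{lem:defbarm}, which already provides a continuous injection $\overline{\bP m} \colon \overline{\bH} \to \bP^{\mcS}_{\geq 0}$ that is a homeomorphism onto its image. Write $K \coloneqq \overline{\bP m}(\overline{\bH})$ for this image and $I \coloneqq \overline{\bP m}(\bH)$. Under the identification (\ref{eq:H-Geo}), the restriction of $\overline{\bP m}$ to $\bH$ is exactly the map (\ref{eq:mH-Geo}), so that $I = \bP m(\Geo(C)/\bC)$; the goal is therefore to show that the closure $\overline{I}$ coincides with $K$ and that $K \cong \overline{D}$.

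First I would check that $K$ is closed. Since $\overline{\bH}$ is compact (being homeomorphic to $\overline{D}$ via (\ref{eq:homeo-bH})) and $\overline{\bP m}$ is continuous, the image $K$ is compact, hence closed in the Hausdorff space $\bP^{\mcS}_{\geq 0}$. As $K \supseteq I$ and $K$ is closed, this gives $\overline{I} \subseteq K$.

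For the reverse inclusion I would invoke density and continuity: $\bH$ is dense in $\overline{\bH}$, and a continuous map sends the closure of a set into the closure of its image, so $\overline{\bP m}$ maps $\overline{\bH}$ into $\overline{I}$, i.e. $K \subseteq \overline{I}$. Combining the two inclusions yields $\overline{I} = K$, that is, $\overline{\bP m(\Geo(C)/\bC)} = \overline{\bP m}(\overline{\bH})$.

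Finally, by Lemma \ref{lem:defbarm} the map $\overline{\bP m}$ is a homeomorphism onto $K$ (equivalently, a continuous injection from the compact space $\overline{\bH}$ into the Hausdorff space $\bP^{\mcS}_{\geq 0}$ is automatically such). Hence $K \cong \overline{\bH} \cong \overline{D}$ via (\ref{eq:homeo-bH}), and in particular $K$ is compact, which is the full statement. The only point demanding care is the identification $\overline{I} = K$, resting on the compact-image-in-Hausdorff closedness argument together with the density argument; all the substantive work has already been front-loaded into Lemma \ref{lem:defbarm}, so no genuine obstacle remains here.
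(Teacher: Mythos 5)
Your proposal is correct and follows essentially the same route as the paper: both deduce $\overline{\bP m(\bH)} = \overline{\bP m}(\overline{\bH})$ from the chain $\bP m(\bH) \subset \overline{\bP m}(\overline{\bH}) \subset \overline{\bP m(\bH)}$ (continuity plus density) together with the fact that the compact image $\overline{\bP m}(\overline{\bH})$ is closed in the Hausdorff space $\bP^{\mcS}_{\geq 0}$, and then conclude via Lemma \ref{lem:defbarm}. No discrepancies to report.
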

\begin{proof}
As before, we identify $\Geo(C)/\bC$ with $\bH$ 
by (\ref{eq:H-Geo}). 
Since $\overline{\bP m}$ is continuous, we have 
\[
\bP m(\bH) \subset \overline{\bP m}(\overline{\bH}) \subset 
\overline{\bP m(\bH)}. 
\]
Moreover, since $\overline{{\bP m}}(\overline{\bH})$ is compact 
and $\bP^\mcS_{\geq 0}$ is Hausdorff, 
$\overline{\bP m}(\overline{\bH}) \subset \bP^\mcS_{\geq 0}$ 
is closed. 
We conclude that 
\[
\overline{D} \simeq \overline{\bP m}(\overline{\bH}) = 
\overline{\bP m(\bH)}, 
\]
where the first isomorphism follows from Lemma \ref{lem:defbarm}. 
\end{proof}

By Proposition \ref{prop:Geo-inj} and Theorem \ref{thm:closure}, we obtain affirmative answers to Question \ref{ques:Thurston compactification} (1) and (2) for any smooth projective curve $C$.
\subsection{Explicit description of the image of $\bP m$}
Here, We describe the image of $\bP m$ explicitly. 

For a point $p\in C$, we set $\mcS_3:=\{\mcO_p,\mcO_C(-p),\mcO_C\}$ and
put $\bR\bP^2_{\geq 0}:=\bP^{\mcS_3}_{\geq 0}$. Consider the map
\[
\bP m \colon \Geo(C)/\bC \to \bR\bP^2_{\geq 0}. 
\]
By  Lemma \ref{lem:defbarm} and Theorem \ref{thm:closure}, $\bP m$ is a homeomorphism onto its image and the closure of the image is compact. 
We define a subset $\Delta \subset\mathbb{R}\bP^2_{\ge0}$ as follows: 
\[
\Delta:=\{[1:X:Y]\in\bR\bP^2_{\ge0}\mid~Y>-X+1,~Y>X-1,~Y<X+1\}
\]
We obtain the following result: 
\begin{prop}\label{geom-chamber-explicit}
The image $\bP m(\Geo(C)/\bC)$ is equal to $\Delta$. 
\end{prop}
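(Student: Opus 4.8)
The plan is to use the identification $\bH \xrightarrow{\sim} \Geo(C)/\bC$ from \eqref{eq:H-Geo} and compute the three relevant masses explicitly. Since $\mcO_p$, $\mcO_C(-p)$, and $\mcO_C$ are all slope stable, Lemma \ref{lem:defbarm} gives $m_{\sigma_{\beta,\alpha}}(E)=|Z_{\beta,\alpha}(E)|$ for each of them (this is exactly where Assumption \ref{assume:S} (3) is used), so that
\[
\bP m(\overline{\sigma}_{\beta,\alpha})=\left[\,1 : \sqrt{(1+\beta)^2+\alpha^2} : \sqrt{\beta^2+\alpha^2}\,\right],
\]
using $Z_{\beta,\alpha}(\mcO_p)=-1$, $Z_{\beta,\alpha}(\mcO_C(-p))=(1+\beta)+\sqrt{-1}\alpha$, and $Z_{\beta,\alpha}(\mcO_C)=\beta+\sqrt{-1}\alpha$. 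Writing $X=\sqrt{(1+\beta)^2+\alpha^2}$ and $Y=\sqrt{\beta^2+\alpha^2}$, the central identity I would exploit is that the short exact sequence $0\to\mcO_C(-p)\to\mcO_C\to\mcO_p\to 0$ yields $Z_{\beta,\alpha}(\mcO_C)=Z_{\beta,\alpha}(\mcO_p)+Z_{\beta,\alpha}(\mcO_C(-p))$ in $\bC$; hence $1$, $X$, $Y$ are precisely the side lengths of the triangle in $\bC$ with vertices $0$, $Z_{\beta,\alpha}(\mcO_p)$, and $Z_{\beta,\alpha}(\mcO_C)$.

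For the inclusion $\bP m(\Geo(C)/\bC)\subseteq\Delta$, I would invoke the triangle inequality for this triangle. Because $\alpha>0$, in each of the three triangle inequalities the pair of summands that would have to be positively proportional for equality never is (their imaginary parts, $\pm\alpha$ or $0$, rule it out), so the triangle is nondegenerate and all three inequalities are strict. These strict inequalities $Y<X+1$, $X<Y+1$, and $X+Y>1$ are exactly the three defining conditions of $\Delta$, giving $\bP m(\overline{\sigma}_{\beta,\alpha})\in\Delta$.

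For the reverse inclusion $\Delta\subseteq\bP m(\Geo(C)/\bC)$, I would solve for a preimage. Given $[1:X:Y]\in\Delta$, the equation $X^2-Y^2=1+2\beta$ forces $\beta=\tfrac12(X^2-Y^2-1)$, and one is then obliged to set $\alpha=\sqrt{Y^2-\beta^2}$. The key point is that the requirement $\alpha^2=Y^2-\beta^2>0$ (equivalently $|\beta|<Y$) unwinds into $X<Y+1$ together with $X>|Y-1|$, i.e. into $X<Y+1$, $X>Y-1$, and $X+Y>1$ — precisely the conditions cutting out $\Delta$. Thus every point of $\Delta$ admits a preimage $\beta+\sqrt{-1}\alpha\in\bH$, which completes the equality of sets.

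The computation is entirely elementary, so I do not expect a serious obstacle; the only points requiring care are confirming that the three chosen objects are stable (so that the masses are honest absolute values of the central charge), and checking that the strict triangle inequalities translate verbatim into the three inequalities defining $\Delta$, with the strictness matching the open condition $\alpha>0$. The conceptually clean observation that organizes the whole argument is that $\mcO_p$, $\mcO_C(-p)$, $\mcO_C$ sit in a short exact sequence, which turns the three masses into the side lengths of a genuine nondegenerate triangle.
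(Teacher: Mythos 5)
Your proposal is correct and follows essentially the same route as the paper: strict triangle inequalities among the three masses for the forward inclusion, and explicitly solving for $\beta=\tfrac12(X^2-Y^2-1)$ and $\alpha=\sqrt{Y^2-\beta^2}$ for the reverse one. The only cosmetic difference is that you derive the strict inequalities from the Euclidean triangle with vertices $0$, $Z_{\beta,\alpha}(\mcO_p)$, $Z_{\beta,\alpha}(\mcO_C)$ (nondegenerate since $\alpha>0$), whereas the paper primarily cites the categorical mass triangle inequality of Ikeda together with stability and phase comparison --- but it explicitly notes your direct computation as an equivalent alternative.
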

\begin{proof}
Take $\beta+\sqrt{-1}\alpha \in \bH$.
Applying the triangle inequality of mass (\cite[Proposition 3.3]{Ike}) for the exact sequence 
\[
0\to\mcO_C(-p)\to\mcO_C\to\mcO_p\to0,
\]
one has 
\begin{eqnarray*}
1&\le& m_{\sigma_{\beta,\alpha}}(\mcO_C(-p))+m_{\sigma_{\beta,\alpha}}(\mcO_C),\\
m_{\sigma_{\beta,\alpha}}(\mcO_C(-p))&\le& m_{\sigma_{\beta,\alpha}}(\mcO_C)+1,\\
m_{\sigma_{\beta,\alpha}}(\mcO_C)&\le& m_{\sigma_{\beta,\alpha}}(\mcO_C(-p))+1. 
\end{eqnarray*}
All these inequalities are strict since all objects in $\mcS_3$ are $\sigma_{\beta,\alpha}$-stable and the phase of $\mcO_p$ is greater than that of the others. 
Alternatively, we can also check the inequalities by direct calculations. 
Therefore we have an inclusion $\bP m(\Geo(C)/\bC)\subset\Delta$. 

We now consider the reverse inclusion. 
For any $[1:X:Y]\in\Delta$, solving the equations
\[
\begin{cases}
X=\sqrt{(\beta+1)^2+\alpha^2}\\
Y=\sqrt{\beta^2+\alpha^2},
\end{cases}
\]
we have
\[
\begin{cases}
\beta=\frac{1}{2}(X^2-Y^2-1)\\
\alpha=\frac{1}{2}\sqrt{(X+Y-1)(X-Y+1)(X+Y+1)(-X+Y+1)}. 
\end{cases}
.\]
Note that $\alpha$ is a positive real number since $[1:X:Y] \in \Delta$.
Therefore, we have $\bP m(\overline{\sigma}_{\beta, \alpha})=[1:X:Y]$.
\end{proof}

\begin{figure}[htbp]
\begin{center}
\begin{tikzpicture}[scale=1]
\draw [->] (-1, 0) -- (3, 0) node[right]{$X$}; 
\draw[->] (0, -1) -- (0, 4) node[right]{$Y$}; 
\draw[dashed, domain=-1:2] plot(\x, -\x+1);  
\draw (-1.3, 2) node[above]{$Y=-X+1$}; 
\draw[dashed, domain=0:3] plot(\x, \x-1) node[right]{$Y=X-1$}; 
\draw[dashed, domain=-1:3] plot(\x, \x+1) node[right]{$Y=X+1$}; 
\fill[lightgray] (1, 0)--(0, 1)--(3, 4)--(3, 2)--cycle;
\end{tikzpicture}
\end{center}
\caption{The image $\bP m(\Geo(C)/\bC)$ in $\bR\bP^2_{\geq 0}$.}
\end{figure}

\newpage

\section{The case of elliptic curves}\label{section-ell}




The case of elliptic curves is discussed. 
We compare the Thurston compactification with the classical one of the torus via homological mirror symmetry in the first two subsections, 
and give the Nielsen--Thurston classification of autoequivalences in the rest. 

For an elliptic curve $X$, recall that we have 
\[
\Stab(X)=\Geo(X). 
\]

\subsection{Homological mirror symmetry for elliptic curves}
In this subsection, we recall the homological mirror symmetry for elliptic curves following \cite{pz98}. 
For $\zeta\in \bH$, 
we consider the elliptic curve $X:=\bC/\bZ\oplus \zeta\bZ$ and a pair $\Tilde{X}:=(T_2,\zeta~dx \wedge dy)$, where $T_2=\bR^2/\bZ^2$ is the torus. Polishchuk and Zaslow \cite{pz98} constructed the equivalence 
\[\Phi_{\mathrm{PZ}}:D^b(X) \iso D^\pi\Fuk(\Tilde{X}).\]
Let $\pi: \bR^2 \to T_2$ be the natural projection.
Recall that an indecomposable object in the derived Fukaya category $D^\pi\Fuk(\Tilde{X})$ of $\Tilde{X}$ is isomorphic to a triple $(L, \lambda, M)$, where $L$ is a Lagrangian submanifold of $\Tilde{X}$ and $\lambda$ is a real number such that  
 \[L=\pi\left(\{ 
    z \in \bC \mid z=z_0+e^{i\pi\lambda}t, 
    ~ t \in \bR
    \}\right),\]
    and $M$ is a local system on $L$ whose monodoromy operators have only eigenvalues in the unitary group $U(1)$. 
    
    We define a surjective homomorphism 
    \[\cl:K(D^\pi\Fuk(\Tilde{X})) \to H_1(T_2,\bZ)\]
    as $\cl(L,\lambda,M):=[L]$ for $(L,\lambda,M) \in D^\pi\Fuk(\Tilde{X})$ with $\lambda \in (-1/2,1/2]$, 
    and extend it for a general element in 
    $D^\pi\Fuk(\tilde{X})$ 
    by using $(L,\lambda,M)[1]=(L,\lambda+1,M)$.
   For $(r_1,d_1),(r_2,d_2) \in H^{2*}(X,\bZ)$, we define the {\it Mukai pairing
 of $(r_1, d_1)$ and $(r_2,d_2)$} as 
 \[\langle(r_1,d_1), (r_2,d_2) \rangle:=r_1d_2-r_2d_1.\]
 For $E_1,E_2 \in D^b(X)$, we have 
\begin{equation}\label{eq:Riemann-Roch}
\chi(E_1,E_2)=\langle \ch(E_1), \ch(E_2)\rangle 
 \end{equation}
    by the Riemann--Roch formula. 
 From now on, we use the same notation as in Example \ref{ex:line segment}. 
We have an isomorphism 
    \[\varphi_{\mathrm{PZ}}:H^{2*}(X,\bZ) \iso H_1(T_2, \bZ), (r,d) \mapsto r \gamma_2+ d \gamma_1.\]
Note that $\varphi_{\mathrm{PZ}}$ is the isometry with respect to the Mukai pairing and the intersection pairing. Moreover, we have the following commutative diagram:

\begin{center}
\begin{tikzcd}
  K(X) \ar[r, "K(\Phi_{\mathrm{PZ}})"] \arrow[d, "\ch"'] & K(D^\pi\Fuk(\Tilde{X})) \ar[d, "\cl"'] \\
  H^{2*}(X,\bZ) \ar[r, "\varphi_{\mathrm{PZ}}"] & H_1(T_2,\bZ).
\end{tikzcd}
\end{center}

For an indecomposable coherent sheaf $E$ on $X$, the object $\Phi_{\mathrm{PZ}}(E)$ is isomorphic to a triple $(L,\lambda,M)$ such that
\[[L]=\rk (E) \gamma_2+ \deg (E) \gamma_1\] 
and $\lambda \in (-1/2,1/2]$. 

Let $\mcP \in D^b(X\times X)$ be the normalized Poincar\'e line bundle of $X$. By \cite{muk81}, the Fourier-Mukai transform 
\[\Phi_\mcP: D^b(X) \to D^b(X),~ E \mapsto \mathbf{R}p_{1*}(p^*_2E \otimes \mcP)\]
is an autoequivalence, where $p_1$ and $p_2$ are the first projection and the second projection respectively. Then we have the cohomological Fourier-Mukai transform 
\begin{equation}\label{coh-Phi_P}
\Phi^H_\mcP:H^{2*}(X,\bZ) \iso H^{2*}(X,\bZ),~(r,d) \mapsto (d,-r), 
\end{equation}
which is an isometry with respect to the Mukai pairing.

We consider the equivalence 
\[\Tilde{\Phi}_{\mathrm{PZ}}:=\Phi_{\mathrm{PZ}} \circ \Phi^{-1}_{\mcP}:D^b(X) \iso D^\pi\Fuk(\Tilde{X})\]
and the isomorphism 
\[\Tilde{\varphi}_{\mathrm{PZ}}:=\varphi_{\mathrm{PZ}} \circ (\Phi^{-1}_{\mcP})^H: H^{2*}(X,\bZ) \iso H_1(T_2,\bZ).\]
For $(r,d) \in H^{2*}(X,\bZ),$ we have $\Tilde{\varphi}_{\mathrm{PZ}}(r,d)=-d\gamma_2+r\gamma_1$.

\begin{rmk}\label{rmk:isometry}
The isomorphism $\Tilde{\varphi}_{\mathrm{PZ}}$ is isometry with respect to the Mukai pairing on $H^{2*}(X,\bZ)$ and the intersection pairing on $H_1(T_2,\bZ)$.
\end{rmk}

\begin{defin}\label{def:stability on Fukaya}
For a complex number $\beta+\sqrt{-1}\alpha \in \bH$, we define a stability condition $\Tilde{\sigma}_{\beta,\alpha}$ 
on $D^\pi\Fuk(\Tilde{X})$ with respect to $H_1(T_2, \bZ)$ 
as follows: 
\[\Tilde{\sigma}_{\beta,\alpha}:=(Z_{\beta, \alpha} \circ \Tilde{\varphi}^{-1}_{\mathrm{PZ}},\Tilde{\Phi}_{\mathrm{PZ}}(\Coh(X))).\]
\end{defin}

\begin{rmk}\label{rmk:period integral}
Central charges of the stability conditions in Definition \ref{def:stability on Fukaya} are related to period integrals.
Take a complex number $\tau=\beta+\sqrt{-1} \alpha \in \bH$. 
Then we have the orientation preserving diffeomorphism $f_\tau: T_2 \to  \bC/\bZ \oplus \tau\bZ$ in Example \ref{example:Teichmuler elliptic}. There is the unique holomorphic $1$-form $\Omega_\tau$ on the elliptic curve $\bC/\bZ \oplus \tau\bZ$ such that 
\[\int_{f_{\tau*}\gamma_1}\Omega_\tau=\tau, ~\int_{f_{\tau*}\gamma_2}\Omega_\tau=1. \]
Then we have the equality
\[\Tilde{Z}_{\beta, \alpha}(-)=\int_{(-)}\Tilde{\Omega}_\tau, \]
where $\Tilde{\Omega}_\tau:=f^*_\tau \Omega_\tau$.
In fact, we can prove this equality as follows.
First, note that the equality
\[\int_{\gamma_k}\Tilde{\Omega}_\tau=\int_{f_{\tau*}\gamma_k}\Omega_\tau\]
holds for $k=1,2$.
Take a class $\gamma=c_1 \gamma_1  + c_2 \gamma_2 \in H_1(T_2,\bZ)$, where $c_1$ and $c_2$ are integers. 
Then we obtain
\begin{align*}
    \Tilde{Z}_{\beta,\alpha}(\gamma) &= Z_{\beta, \alpha}(\Phi^H_\mcP \circ \varphi^{-1}_{\mathrm{PZ}}(\gamma)) \\
    &=Z_{\beta, \alpha}(c_1, -c_2)\\
    &=c_2+\tau c_1\\
    &=c_2 \int_{\gamma_2}\Tilde{\Omega}_\tau + c_1\int_{\gamma_1}\Tilde{\Omega}_\tau\\
    &= \int_{\gamma}\Tilde{\Omega}_\tau.
\end{align*}
\end{rmk}


\subsection{Comparison of two Thurston compactifications}
In this subsection, we compare Thurston compactifications of spaces of stability conditions with Thurston compactifications of Teichm\"uller spaces in the case of elliptic curves.
We keep the notations as in the previous subsection. 

Recall that an object $E \in D^b(X)$ is called {\it spherical} if we have
\[ \mathbf{R}\Hom(E,E) \simeq \bC \oplus \bC[-1].\]

\begin{defin}
Let $\Sph(X)$ and $\Sph(\Tilde{X})$ be the sets of isomorphism classes of spherical objects in $D^b(X)$ and $D^\pi\Fuk(\Tilde{X})$ respectively.
\end{defin}

Spherical objects on elliptic curves are characterized as follows:

\begin{prop}\label{prop:spherical}
Let $\sigma \in \Stab(X)$ be a stability condition on $D^b(X)$.
Then a non-zero object $E \in D^b(X)$ is 
spherical if and only if it is $\sigma$-stable.
In particular, if $E$ is spherical, then we have $\gcd(\rk(E), \deg(E))=1$.
\end{prop}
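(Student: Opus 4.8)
The plan is to reduce the statement to Atiyah's classification of sheaves on the elliptic curve $X$. First I would exploit that $X$ is a smooth curve, so $\Coh(X)$ has homological dimension one and every object of $D^b(X)$ splits as $E \cong \bigoplus_i H^i(E)[-i]$. Each hypothesis forces $E$ to be indecomposable: a spherical object has $\hom(E,E)=1$, and a $\sigma$-stable object has $\End(E)=\bC$, so in neither case can $E$ have a nontrivial direct summand. Hence $E \cong F[n]$ for an indecomposable coherent sheaf $F$ and some $n\in\bZ$; since both \emph{spherical} and \emph{$\sigma$-stable} are preserved by the shift functor, I may assume $E=F$ is an indecomposable sheaf.

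Next I would translate both conditions into the single numerical one $\gcd(\rk F,\deg F)=1$. Since $X$ is an elliptic curve, $\omega_X\cong\mcO_X$, and Serre duality gives $\Ext^1(F,F)\cong\Hom(F,F)^\vee$ together with $\Ext^{\geq 2}(F,F)=0$; hence $\mathbf{R}\Hom(F,F)\simeq\bC\oplus\bC[-1]$ if and only if $\hom(F,F)=1$, i.e. $F$ is simple. On the other hand, because $\sigma\in\Stab(X)=\Geo(X)$ is geometric, $\sigma$-stability of the sheaf $F$ coincides with slope-stability (for any geometric $\sigma$ the stable objects agree up to shift). I would then invoke Atiyah's classification: every indecomposable sheaf $F$ on $X$ is slope-semistable and satisfies $\dim\End(F)=\gcd(\rk F,\deg F)$, and it is slope-stable precisely when this $\gcd$ equals $1$. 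Chaining these yields the equivalences $F$ spherical $\iff \dim\End(F)=1 \iff \gcd(\rk F,\deg F)=1 \iff F$ $\sigma$-stable, which is the assertion. The final claim is then immediate, since $\rk$ and $\deg$ change only by a common sign under $[n]$, so $\gcd(\rk E,\deg E)=\gcd(\rk F,\deg F)=1$.

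The main obstacle is the equivalence of simplicity and slope-stability for indecomposable sheaves used in the second step. The direction slope-stable $\Rightarrow$ simple is formal: a nonzero endomorphism of a stable sheaf is an isomorphism, so $\End(F)$ is a finite-dimensional division algebra over $\bC$ and hence equals $\bC$. The converse, however, genuinely uses the special geometry of the elliptic curve through Atiyah's theorem --- specifically the fact that indecomposable sheaves are semistable with endomorphism algebra of dimension $\gcd(\rk,\deg)$. The heredity splitting of the first step and the Serre-duality computation are, by contrast, routine.
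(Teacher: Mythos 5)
Your argument is correct, and it follows the same skeleton as the paper's proof: reduce to an indecomposable object, hence (up to shift) to an indecomposable sheaf; use Serre duality with $\omega_X\cong\mcO_X$ to identify sphericality with simplicity; and use the special geometry of the elliptic curve to identify simplicity of an indecomposable sheaf with slope stability, which for a geometric stability condition (and every $\sigma\in\Stab(X)=\Geo(X)$ is geometric) agrees with $\sigma$-stability. The difference lies in the key lemma carrying the middle step. The paper deduces that an indecomposable object is $\sigma$-semistable from the proof of Bridgeland's Theorem 8.1, and then cites Hein--Ploog for ``simple $\mu$-semistable $\Rightarrow$ $\mu$-stable''; you instead obtain ``shift of an indecomposable sheaf'' more elementarily from hereditariness of $\Coh(X)$, and replace the Hein--Ploog input by Atiyah's facts that indecomposable sheaves on an elliptic curve are semistable with $\dim\End(F)=\gcd(\rk F,\deg F)$. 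These inputs are essentially equivalent, but your route has the small advantage of delivering the coprimality claim $\gcd(\rk E,\deg E)=1$ as an immediate byproduct rather than leaving it implicit. The only points to make precise in a written version are the two standard facts you quote somewhat glibly: the Atiyah formula for $\dim\End$ (including the torsion case), and the statement that for every geometric $\sigma$ the $\sigma$-stable objects are exactly the shifts of slope-stable sheaves, which follows from the parametrization of $\Geo(X)$ by $\bH\times\bC$ used throughout the paper.
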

\begin{proof}
Assume that $E$ is spherical. 
Since $\Hom(E,E)=\bC$, the object $E$ is indecomposable. By the proof of \cite[Theorem 8.1]{bri}, $E$ is a $\sigma$-semistable object. By \cite[Theorem 8.1]{bri} and taking shifts, we may assume that $E$ is a $\mu$-semistable sheaf on $X$. By \cite[Lemma 1, Proposition 4]{hp} and the equality $\Hom(E,E)=\bC$, $E$ is $\mu$-stable.
Therefore, $E$ is $\sigma$-stable.
The converse is easily deduced from the Serre duality.
\end{proof}

Now, we obtain the following proposition.
\begin{prop}\label{prop:mass=length}
Take a complex number $\tau \in \bH$ and put $t:=[f_\tau] \in \mcT(T_2)$. 
For a spherical object $(L, \lambda, M) \in D^\pi\Fuk(\Tilde{X})$, we have 
\[m_{\Tilde{\sigma}_{\beta,\alpha}}(L,\lambda,M)=l_{t}([L]). \]
\end{prop}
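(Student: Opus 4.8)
The plan is to show both sides equal the same explicit expression in $\tau$ and the topological type of the Lagrangian $L$. First I would compute the left-hand side. Since $(L,\lambda,M)$ is spherical, Proposition \ref{prop:spherical} (transported through $\Tilde{\Phi}_{\mathrm{PZ}}$) tells us it is $\Tilde{\sigma}_{\beta,\alpha}$-stable, so it has a single Harder--Narasimhan factor and its mass is simply the modulus of its central charge:
\[
m_{\Tilde{\sigma}_{\beta,\alpha}}(L,\lambda,M)=\left|Z_{\beta,\alpha}\bigl(\Tilde{\varphi}_{\mathrm{PZ}}^{-1}[L]\bigr)\right|=\left|\Tilde{Z}_{\beta,\alpha}([L])\right|.
\]
Here I use the definition of $\Tilde{\sigma}_{\beta,\alpha}$ (Definition \ref{def:stability on Fukaya}) and that $\cl(L,\lambda,M)=[L]$.

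Next I would identify the homology class. Writing $[L]=c_1\gamma_1+c_2\gamma_2\in H_1(T_2,\bZ)$, the period computation carried out in Remark \ref{rmk:period integral} gives $\Tilde{Z}_{\beta,\alpha}([L])=c_2+\tau c_1$, so that
\[
m_{\Tilde{\sigma}_{\beta,\alpha}}(L,\lambda,M)=|c_2+c_1\tau|.
\]
For the right-hand side I would invoke Example \ref{ex:line segment}: under the identification of $\mcS$ with primitive classes in $H_1(T_2,\bZ)$, the length of the geodesic representing $c_1\gamma_1+c_2\gamma_2$ at $t=[f_\tau]$ is exactly $l_t(c_1\gamma_1+c_2\gamma_2)=|c_2+c_1\tau|$ by formula (\ref{eq:length T_2}). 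Comparing the two displays yields the claimed equality $m_{\Tilde{\sigma}_{\beta,\alpha}}(L,\lambda,M)=l_t([L])$.

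The main obstacle I anticipate is not the period computation, which Remark \ref{rmk:period integral} already supplies, but rather the bookkeeping needed to guarantee that $[L]$ is genuinely a \emph{primitive} class so that it corresponds to an element of $\mcS$ and formula (\ref{eq:length T_2}) applies. This is where sphericity does the essential work: by Proposition \ref{prop:spherical} a spherical object $E\in D^b(X)$ satisfies $\gcd(\rk(E),\deg(E))=1$, and transporting this through the lattice isomorphism $\Tilde{\varphi}_{\mathrm{PZ}}$ (which is an isometry by Remark \ref{rmk:isometry} and sends $(r,d)$ to $-d\gamma_2+r\gamma_1$) shows that $[L]$ is primitive, hence lies in $\mcS$. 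I would therefore open the argument by recording this primitivity, then reduce to stability to kill the sum over HN-factors, and finally chain together the period formula and the length formula. Everything else is a direct substitution, so no delicate estimates are required.
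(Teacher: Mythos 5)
Your proof is correct and follows essentially the same route as the paper: stability of the spherical object (Proposition \ref{prop:spherical}) reduces the mass to $|\Tilde{Z}_{\beta,\alpha}([L])|$, the period computation of Remark \ref{rmk:period integral} evaluates this as $|c_2+\tau c_1|$, and formula (\ref{eq:length T_2}) identifies that with $l_t([L])$. Your extra care about primitivity of $[L]$ is the content of Remark \ref{rmk:spherical class}, which the paper records separately rather than inside this proof.
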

\begin{proof}
By Proposition \ref{prop:spherical}, $(L, \lambda, M)$ is $\Tilde{\sigma}_{\beta,\alpha}$-stable.
Therefore, by example \ref{ex:line segment} and Remark \ref{rmk:period integral}, we obtain
\begin{align*}
m_{\Tilde{\sigma}_{\beta,\alpha}}(L,\lambda,M)&=\left|\Tilde{Z}_{\beta,\alpha}([L])\right|\\
&=\left| \int_{\gamma}\Tilde{\Omega}_\tau \right|\\
&=|c_2+\tau c_1|\\
&=l_t([L]).
\end{align*}
\end{proof}

Denote the space of stability conditions on $D^\pi\Fuk(\Tilde{X})$ with respect to $\cl$ by $\Stab(\Tilde{X})$.
The equivalence $\Tilde{\Phi}_{\mathrm{PZ}}:D^b(X) \iso D^\pi\Fuk(\Tilde{X})$ induces isomorphisms 
\[\Tilde{\Phi}_{\mathrm{PZ}}: \Stab(X)/\bC \iso \Stab(\Tilde{X})/\bC,\]
\[ \bP\left(\Tilde{\Phi}_{\mathrm{PZ}}\right): \bP^{\Sph(X)}_{\geq 0} \iso \bP^{\Sph(\Tilde{X})}_{\geq 0}, \]
\begin{rmk}\label{diag-mirror}
The above isomorphisms fit into the following commutative diagram: 
\begin{center}
\begin{tikzcd}
  \Stab(X)/\bC \ar[r, "\bP m"] \arrow[d, "\Tilde{\Phi}_{\mathrm{PZ}}"'] & \bP^{\Sph(X)}_{\geq 0} \ar[d, "\bP\Tilde{\Phi}_{\mathrm{PZ}}"'] \\
  \Stab(\Tilde{X})/\bC \ar[r, "\bP m"] & \bP^{\Sph(\Tilde{X})}_{\geq 0}.
\end{tikzcd}
\end{center}
\end{rmk}

Let $\mcS$ be the set of free homotopy classes of simple closed curves in $T_2$ as in Example \ref{ex:line segment}. 

\begin{rmk}\label{rmk:spherical class}
For $(L,\lambda, M) \in \Sph(\tilde{X})$, 
there are integers $c_1$ and $c_2$ such that $[L]=c_1 \gamma_1+c_2 \gamma_2$ and $\gcd(c_1,c_2)=1$ by the equivalence $\Tilde{\Phi}_{\mathrm{PZ}}$ and Proposition \ref{prop:spherical}.
\end{rmk}

By Proposition \ref{prop:spherical} and Remark \ref{rmk:spherical class}, 
we can define the following map.

\begin{defin}
Take a point $x=(x_{\gamma})_{\gamma \in \mcS} \in \bP^{\mcS}_{\geq 0}$. 
For a spherical object $(L,\lambda,M) \in \Sph(\Tilde{X})$, we define 
$y_{(L,\lambda,M)}:=x_{[L]}$. Defining
\[\iota(x):=(y_{(L,\lambda,M)})_{(L,\lambda,M)\in \Sph(\Tilde{X})}, \]
we obtain an injective continuous map
\[\iota:\bP^{\mcS}_{\geq 0} \to \bP^{\Sph(\Tilde{X})}_{\geq 0}. \]
\end{defin}

By the isomorphism (\ref{isomorphism:H vs Stab}), we have the isomorphism 
\[\varepsilon : \bH \iso \Stab(X)/\bC. \]
We consider the isomorphism 
\[\eta:=\Tilde{\Phi}_{\mathrm{PZ}} \circ \varepsilon \circ \xi^{-1}: \mcT(T_2) \iso \Stab(\Tilde{X})/\bC,\]
where $\xi:\bH \iso \mcT(T_2)$ is the isomorphism in Example \ref{example:Teichmuler elliptic}. For $\tau=\beta+\sqrt{-1}\alpha \in \bH$, we have $\eta([f_\tau])=\Tilde{\sigma}_{\beta,\alpha}$. 

\begin{defin}\label{def-intersection}
We define functions $i_X$ and $i_{\Tilde{X}}$ as follows: 
\[i_X:\Sph(X)\times\Sph(X) \to \bZ_{\geq 0}, \quad 
(E,F) \mapsto |\chi(E,F)|,\]
\[i_{\Tilde{X}}:\Sph(\Tilde{X})\times\Sph(\Tilde{X}) \to \bZ_{\geq 0}, \quad 
(E,F) \mapsto |\chi(E,F)|.\]
Then we define maps $i_{X*}$ and $i_{\Tilde{X}*}$ 
as in Question \ref{ques:Thurston compactification} (3): 
\begin{equation*}
i_{X*}:\Sph(X) \to \bP^{\Sph(X)}_{\geq 0}, \quad
i_{\Tilde{X}*}:\Sph(\Tilde{X}) \to \bP^{\Sph(\Tilde{X})}_{\geq 0}. 
\end{equation*}
\end{defin}

Recall that
\begin{eqnarray*}
\overline{\Stab(X)/\bC}&:=&\overline{\bP m(\Stab(X)/\bC)}\\
\partial\Stab(X)/\bC&:=&\left(\overline{\Stab(X)/\bC}\right)\backslash\bP m\left(\Stab(X)/\bC\right), 
\end{eqnarray*}
cf. Question \ref{ques:Thurston compactification} (2),(3). 
The following is the main theorem of this section. 
\begin{thm}\label{thm:main elliptic}
The following diagram commutes:
\begin{center}
\begin{tikzcd}
  \Stab(X)/\bC \ar[r, hookrightarrow, "\bP m"] \arrow[d, "\sim"sloped, "\Tilde{\Phi}_{\mathrm{PZ}}"'] & \bP^{\Sph(X)}_{\geq 0} \ar[d, "\sim"sloped, "\bP\Tilde{\Phi}_{\mathrm{PZ}}"'] \\
  \Stab(\Tilde{X})/\bC \ar[r, hookrightarrow, "\bP m"] & \bP^{\Sph(\Tilde{X})}_{\geq 0}\\
      \mcT(T_2) \ar[r, hookrightarrow, "\bP l "] \arrow[u, "\eta", "\sim"'sloped] & \bP^{\mcS}_{\geq 0} \arrow[u, "\iota", hookrightarrow].
\end{tikzcd}
\end{center}
Furthermore, the following statements hold.
\begin{itemize}
    \item[(1)] The maps given by the restrictions
    \[\bP \Tilde{\Phi}_{\mathrm{PZ}}: \overline{\Stab(X)/\bC} \to \overline{\Stab(\Tilde{X})/\bC},
\]
\[\iota:\overline{\mcT(T_2)} \to \overline{\Stab(\Tilde{X})/\bC}, 
\]
are homeomorphisms.
\item[(2)]
For $E \in \Sph(X)$ with $\Tilde{\Phi}_{\mathrm{PZ}}(E)=(L,\lambda,M)$, we have 
\[\bP\Tilde{\Phi}_{\mathrm{PZ}} (i_{X*}(E))=i_{\Tilde{X}*}(L,\lambda,M)=\iota(i_{*}(\cl(L,\lambda,M))). \]

In particular, we obtain 
\[\partial \Stab(X)/\bC=\overline{i_{X*}(\Sph(X))},\]
\[\partial \Stab(\Tilde{X})/\bC=\overline{i_{\Tilde{X}*}(\Sph(\Tilde{X}))}.\]

\end{itemize}
\end{thm}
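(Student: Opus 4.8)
The plan is to verify the commutativity of the diagram, deduce the homeomorphisms of (1) by a compactness argument, and then read off the boundary from Thurston's theorem using the Riemann--Roch identification of the Euler pairing with the intersection number.

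\emph{Commutativity and embeddings.} First I would record that the two horizontal maps $\bP m$ are embeddings: since $\Stab(X)=\Geo(X)$ and $\Sph(X)$ satisfies Assumption \ref{assume:S} (the structure sheaf $\mcO_X$, a skyscraper $\mcO_p$, and a line bundle $\mcO_X(-q)$ are spherical, and every spherical object is slope stable by Proposition \ref{prop:spherical}), Theorem \ref{thm:intromain} applies with $\mcS=\Sph(X)$; the statement for $D^\pi\Fuk(\Tilde{X})$ follows by transporting along $\Tilde{\Phi}_{\mathrm{PZ}}$. The top square is exactly Remark \ref{diag-mirror}. For the bottom square I would take $\tau=\beta+\sqrt{-1}\alpha\in\bH$ with $t=[f_\tau]$, so that $\eta(t)=\Tilde{\sigma}_{\beta,\alpha}$; then the $(L,\lambda,M)$-coordinate of $\bP m(\eta(t))$ is $m_{\Tilde{\sigma}_{\beta,\alpha}}(L,\lambda,M)=l_t([L])$ by Proposition \ref{prop:mass=length}, which by the definition of $\iota$ is the $(L,\lambda,M)$-coordinate of $\iota(\bP l(t))$. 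Hence $\bP m\circ\eta=\iota\circ\bP l$.

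\emph{Part (1).} The map $\bP\Tilde{\Phi}_{\mathrm{PZ}}$ is a homeomorphism of the whole projective spaces induced by the bijection $\Tilde{\Phi}_{\mathrm{PZ}}\colon\Sph(X)\iso\Sph(\Tilde{X})$, and by the top square it carries $\bP m(\Stab(X)/\bC)$ onto $\bP m(\Stab(\Tilde{X})/\bC)$; passing to closures gives the first homeomorphism. For $\iota$, I would use that $\overline{\mcT(T_2)}$ is compact (Theorem \ref{thm:Thurston comactification}) while $\iota$ is injective and continuous into the Hausdorff space $\bP^{\Sph(\Tilde{X})}_{\geq 0}$, so $\iota$ restricts to a homeomorphism onto the closed set $\iota(\overline{\mcT(T_2)})$. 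By the bottom square $\iota(\bP l(\mcT(T_2)))=\bP m(\Stab(\Tilde{X})/\bC)$, and combining continuity of $\iota$ one way with closedness of the compact image the other way yields $\iota(\overline{\mcT(T_2)})=\overline{\Stab(\Tilde{X})/\bC}$.

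\emph{Part (2).} Because $\bP\Tilde{\Phi}_{\mathrm{PZ}}$ reindexes coordinates by $\Tilde{\Phi}_{\mathrm{PZ}}^{-1}$, the $G$-coordinate of $\bP\Tilde{\Phi}_{\mathrm{PZ}}(i_{X*}(E))$ equals $|\chi(E,\Tilde{\Phi}_{\mathrm{PZ}}^{-1}(G))|=|\chi((L,\lambda,M),G)|$ since an equivalence preserves $\chi$; this is precisely $i_{\Tilde{X}*}(L,\lambda,M)$, giving the first equality. For the second, writing $G=(L',\lambda',M')$ and $F=\Tilde{\Phi}_{\mathrm{PZ}}^{-1}(G)$, I would use the Riemann--Roch formula (\ref{eq:Riemann-Roch}), the fact that $\Tilde{\varphi}_{\mathrm{PZ}}$ is an isometry from the Mukai pairing to the intersection pairing (Remark \ref{rmk:isometry}) with $\cl(L,\lambda,M)=[L]$, and the torus formula $i(a\gamma_1+b\gamma_2,c\gamma_1+d\gamma_2)=|bc-ad|$ identifying the algebraic intersection pairing with the geometric intersection number on primitive classes (Remark \ref{rmk:spherical class}). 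This gives $|\chi((L,\lambda,M),G)|=i([L],[L'])$, which is exactly the $G$-coordinate of $\iota(i_*([L]))$.

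\emph{Boundaries.} By Theorem \ref{thm:Thurston comactification}(3), $\partial\mcT(T_2)=\overline{i_*(\mcS)}$. The homeomorphism $\iota$ of (1) sends $\bP l(\mcT(T_2))$ onto $\bP m(\Stab(\Tilde{X})/\bC)$, hence $\partial\mcT(T_2)$ onto $\partial\Stab(\Tilde{X})/\bC$, and commutes with closures. Using Part (2) together with the surjectivity of $(L,\lambda,M)\mapsto[L]$ onto $\mcS$ (Remark \ref{rmk:spherical class}) gives $\iota(i_*(\mcS))=i_{\Tilde{X}*}(\Sph(\Tilde{X}))$, whence $\partial\Stab(\Tilde{X})/\bC=\overline{i_{\Tilde{X}*}(\Sph(\Tilde{X}))}$; transporting through $\bP\Tilde{\Phi}_{\mathrm{PZ}}$, which maps $i_{X*}(E)$ to $i_{\Tilde{X}*}(L,\lambda,M)$ by Part (2) and $\partial\Stab(X)/\bC$ onto $\partial\Stab(\Tilde{X})/\bC$, yields $\partial\Stab(X)/\bC=\overline{i_{X*}(\Sph(X))}$. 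I expect the main obstacle to be the second equality of Part (2): threading the isometry $\Tilde{\varphi}_{\mathrm{PZ}}$ and Riemann--Roch through the mirror equivalence with the correct signs so that the absolute Euler pairing becomes exactly the geometric intersection number, together with confirming that every primitive class in $\mcS$ is realized as some $[L]$ so that the boundary identification is genuinely surjective.
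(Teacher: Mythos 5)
Your proposal is correct and follows essentially the same route as the paper: the top square via Remark \ref{diag-mirror}, the bottom square via Proposition \ref{prop:mass=length}, part (1) by the compact-to-Hausdorff argument, and part (2) by the coordinate reindexing together with Riemann--Roch, the isometry $\Tilde{\varphi}_{\mathrm{PZ}}$, and the torus intersection formula. Your spelled-out derivation of the boundary identifications from Theorem \ref{thm:Thurston comactification}(3) and the surjectivity of $(L,\lambda,M)\mapsto[L]$ is exactly the argument the paper leaves implicit after ``In particular.''
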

\begin{proof}
The commutativity of the diagrams is deduced from Remark \ref{diag-mirror} and Proposition \ref{prop:mass=length}. 

We prove the statement (1).
Since $\iota$ is continuous, we have $\iota\left(\overline{\mcT(T_2)}\right) \subset \overline{\iota(\bP l(\mcT(T_2)))}=\overline{\Stab(\Tilde{X})/\bC}$. Therefore, we obtain the injective continuous map 
$\iota: \overline{\mcT(T_2)} \to \overline{\Stab(\Tilde{X})/\bC}$.
Since $\overline{\mcT(T_2)}$ is compact and $\overline{\Stab(\Tilde{X})/\bC}$ is Hausdorff, $\iota: \overline{\mcT(T_2)} \to \overline{\Stab(\Tilde{X})/\bC}$ is a homeomorphism onto its image. Since $\iota\left(\overline{\mcT(T_2)}\right)$ is a compact subspace of the Hausdorff space  $\overline{\Stab(\Tilde{X})/\bC}$, $\iota\left(\overline{\mcT(T_2)}\right)$ is closed. 
By the inclusion $\bP m(\Stab(\Tilde{X})/\bC) \subset \iota\left(\overline{\mcT(T_2)}\right)$, we obtain \[  \iota\left(\overline{\mcT(T_2)}\right)=\overline{\Stab(\Tilde{X})/\bC}.\]

Next, we prove the statement (2). 
Take $(L',\lambda',M') \in \Sph(\Tilde{X})$. Then we have 
\begin{align*}
    \bP \Tilde{\Phi}_{\mathrm{PZ}}(i_{X*}(E))(L',\lambda',M')&=i_{X*}(E)(\Tilde{\Phi}^{-1}_{\mathrm{PZ}}(L',\lambda',M')) \\
    &=i_{X}(E,\Tilde{\Phi}^{-1}_{\mathrm{PZ}}(L',\lambda',M'))\\
    &=i_{\Tilde{X}}((L,\lambda,M),(L',\lambda',M'))\\
    &=i_{\Tilde{X}*}(L, \lambda, M)(L',\lambda',M')
\end{align*}
by (\ref{eq:Riemann-Roch}) and Remark \ref{rmk:isometry}. and 
\begin{align*}
    \iota(i_{*}(\cl(L,\lambda,M)))(L',\lambda',M')&=i([L],[L'])\\
    &=i_{\Tilde{X}*}(L,\lambda,M)(L',\lambda',M')
\end{align*}
by Example \ref{ex:line segment}.
\end{proof}


By Theorem \ref{thm:main elliptic}, we obtain an affirmative answer to Question \ref{ques:Thurston compactification} (3) for an elliptic curve $X$.

\subsection{Nielsen--Thurston classification of autoequivalences and the categorical entropy}
Here, we consider an analogue of the classical Nielsen--Thurston classification of the torus
for autoequivalences of elliptic curves, and its relation to the categorical entropy. 

\subsubsection{${\rm PSL}(2,\bZ)$-actions}

Taking the cohomological Fourier--Mukai transforms, 
we have an exact sequence
\begin{equation}\label{ell-ex}
1 \to \left(\Aut(X) \ltimes \Pic^0(X) \right) \times \bZ[2] \to \Aut(D^b(X)) \to \SL(2,\bZ) \to 1.
\end{equation}


The exact sequence (\ref{ell-ex}) induces an isomorphism 
\[
\rho:~\Gamma(X):=\Aut(D^b(X))/\left((\Aut(X) \ltimes \Pic^0(X))\times \bZ[1]\right)
\xrightarrow{\sim}
{\rm  PSL}(2,\bZ)
.
\]
Since the group $\mcI(X):=\left(\Aut(X) \ltimes \Pic^0(X) \right) \times \bZ[1]$ acts on $\Stab(X)/\bC$ trivially, 
$\Gamma(X)$ naturally acts by isometries on $\Stab(X)/\bC$ with respect to the quotient metric $\bar{d}_B$ of $d_B$ by the $\bC$-action. 

We consider an action of ${\rm  PSL}(2,\bZ)$ on $\bH$
as a variant of M\"{o}bius transformation given by 
\begin{equation}\label{var-Mobius}
A.z:=
\left(
\begin{pmatrix}
0 & 1 \\
1 & 0 \\
\end{pmatrix}
A
\begin{pmatrix}
0 & 1 \\
1 & 0 \\
\end{pmatrix}
\right)
._Mz
\end{equation}
for $A\in{\rm  PSL}(2,\bZ)$ and $z\in\bH$, 
where the action $._M$ on the right hand side is the usual M\"{o}bius transformation
\[
\begin{pmatrix}
a & b \\
c & d \\
\end{pmatrix}
._Mz:=
\frac{az+b}{cz+d}
.
\]
\begin{rmk}\label{rem-var-Mobius}
\begin{enumerate}
\item
Clearly, the action (\ref{var-Mobius}) 
of ${\rm  PSL}(2,\bZ)$ is also isometric with respect to the hyperbolic metric $d_H$ on $\bH$. 
\item
Since the trace is invariant under taking the conjugation, 
for (a representative of) $A\in{\rm  PSL}(2,\bZ)$, we have 
\[
|{\rm tr}A|
=
\left |{\rm tr}
\begin{pmatrix}
0 & 1 \\
1 & 0 \\
\end{pmatrix}
A
\begin{pmatrix}
0 & 1 \\
1 & 0 \\
\end{pmatrix}
\right |.
\]

\end{enumerate}
\end{rmk}
\begin{prop}\label{psl-equiv}
Via the identification $\rho^{-1}:{\rm  PSL}(2,\bZ)\xrightarrow{\sim}\Gamma(X)$,   
the homeomorphism
\[
\varphi:(\bH, d_H) \xrightarrow{\sim} (\Stab(C)/\bC, \bar{d}_B)
\]
is ${\rm PSL}(2,\bZ)$-equivariant and isometric (up to a multiplicative constant). 
\end{prop}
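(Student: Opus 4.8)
The plan is to prove the two claims in turn. For ${\rm PSL}(2,\bZ)$-equivariance I would use that both the variant M\"obius action (\ref{var-Mobius}) on $\bH$ and the $\Gamma(X)$-action on $\Stab(X)/\bC$ are left group actions and that $\rho$ is a group isomorphism; hence it suffices to verify $\varphi(A.z)=\rho^{-1}(A)\cdot\varphi(z)$ for $A$ ranging over a generating set of ${\rm PSL}(2,\bZ)$. I would take as generators the images under $\rho$ of the Fourier--Mukai transform $\Phi_{\mcP}$ and the twist $-\otimes\mcO_X(p)$; their cohomological actions $(r,d)\mapsto(d,-r)$ and $(r,d)\mapsto(r,d+r)$ give $\rho(\Phi_{\mcP})=\left(\begin{smallmatrix}0&1\\-1&0\end{smallmatrix}\right)$ and $\rho(-\otimes\mcO_X(p))=\left(\begin{smallmatrix}1&0\\1&1\end{smallmatrix}\right)$, which generate ${\rm PSL}(2,\bZ)$. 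By the definition (\ref{var-Mobius}) the variant action of these two matrices is the ordinary M\"obius action of their conjugates $\left(\begin{smallmatrix}0&-1\\1&0\end{smallmatrix}\right)$ and $\left(\begin{smallmatrix}1&1\\0&1\end{smallmatrix}\right)$, namely $\tau\mapsto-1/\tau$ and $\tau\mapsto\tau+1$.

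It then remains to check, writing $\tau=\beta+\sqrt{-1}\alpha$ and $\varphi(\tau)=\overline{\sigma}_{\beta,\alpha}$, that $\Phi_{\mcP}\cdot\varphi(\tau)=\varphi(-1/\tau)$ and $(-\otimes\mcO_X(p))\cdot\varphi(\tau)=\varphi(\tau+1)$. For this I would use the formula for the $\Aut(\mcD)$-action on central charges: $\Phi\cdot\sigma_{\beta,\alpha}$ has central charge $Z_{\beta,\alpha}\circ\alpha(\Phi^{-1})$. For $\Phi=\Phi_{\mcP}$ this sends $(r,d)$ to $-r-\tau d=\tau\cdot Z_{-1/\tau}(r,d)$, so Lemma \ref{curv-stab-determined-by-central} gives $\Phi_{\mcP}\cdot\overline{\sigma}_{\beta,\alpha}=\varphi(-1/\tau)$; for $\Phi=-\otimes\mcO_X(p)$ it sends $(r,d)$ to $-d+(\tau+1)r=Z_{\tau+1}(r,d)$, giving $\varphi(\tau+1)$. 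These match the variant M\"obius images above, so equivariance holds on generators and hence everywhere.

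For the isometry I would pass to the infinitesimal (Finsler) metric. At $\sigma_{\beta,\alpha}$ the tangent space of $\Geo(X)/\bC$ is identified with $\bC$ through a variation $\delta\tau$, and for a $\sigma$-stable object $E$ of class $(r,d)$ one has $\delta Z(E)/Z(E)=\delta\tau/(\tau-d/r)$. By Proposition \ref{prop:spherical} every coprime class is realized by a stable object, so as $(r,d)$ varies these ratios fill a dense subset of the circle $\tfrac{\delta\tau}{\alpha}C$, where $C$ is the circle through the origin of radius $\tfrac12$ and centre $-\tfrac{\sqrt{-1}}{2}$. With $N(w):=\max\{|\mathrm{Re}\,w|,\tfrac1\pi|\mathrm{Im}\,w|\}$ the infinitesimal form of $d_B$ is $\sup_E N(\delta Z(E)/Z(E))$, and since the $\bC$-direction translates all these ratios by one common constant, the infinitesimal form of $\bar{d}_B$ is
\[
\|\delta\tau\|_{\bar{d}_B}=\inf_{\mu\in\bC}\ \sup_{w\in\frac{\delta\tau}{\alpha}C}N(w+\mu).
\]
The $N$-ball of radius $R$ is the rectangle $[-R,R]\times[-\pi R,\pi R]$, so the infimum is the least $R$ for which a translate of the circle $\tfrac{\delta\tau}{\alpha}C$, of radius $\tfrac{|\delta\tau|}{2\alpha}$, fits into that rectangle, i.e. $R=\tfrac{|\delta\tau|}{2\alpha}$. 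Since the hyperbolic norm of $\delta\tau$ at $\tau$ is $\tfrac{|\delta\tau|}{\alpha}$, this shows $\|\delta\tau\|_{\bar{d}_B}=\tfrac12\|\delta\tau\|_{d_H}$ at every point and in every direction.

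The hard part will be upgrading this pointwise proportionality of infinitesimal metrics to the global equality $\bar{d}_B=\tfrac12\,d_H$: one must know that $\bar{d}_B$ is the length metric of its infinitesimal Finsler norm, so that agreement of the norms forces agreement of the distance functions. I expect to supply this from the explicit form of the Bridgeland metric on the geometric locus together with the analysis of the quotient metric in \cite{Kik-curvature}; note that the anisotropy of $N$ is harmless precisely because $\{\delta Z(E)/Z(E)\}$ is a full circle, which is what renders the infimum over the $\bC$-direction isotropic. The resulting constant $\tfrac12$ is moreover consistent with the Teichm\"uller metric on $\mcT(T_2)\cong\bH$ being half the Poincar\'e metric, in accordance with the mass--length dictionary of Proposition \ref{prop:mass=length}.
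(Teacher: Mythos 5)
Your equivariance argument is correct and is essentially identical to the paper's: the paper also reduces to the generators $\Phi_{\mcP}$ and $-\otimes\mcO_C(p)$, records $\rho(\Phi_\mcP)=\left(\begin{smallmatrix}0&1\\-1&0\end{smallmatrix}\right)$ and $\rho(-\otimes\mcO_C(p))=\left(\begin{smallmatrix}1&0\\1&1\end{smallmatrix}\right)$, computes the transformed central charges ($Z_{-1/\tau}=\tfrac{1}{\tau}\,Z_\tau\circ(\Phi_\mcP^{-1})^H$, etc.), and concludes via Lemma \ref{curv-stab-determined-by-central}. No issues there.

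The isometry half is where you diverge, and where there is a genuine gap — one you partly flag yourself. Your infinitesimal computation (the circle $\{\delta Z(E)/Z(E)\}$ of radius $|\delta\tau|/2\alpha$, the rectangle shape of the $N$-ball, the infimum over the $\bC$-translation) is a nice calculation and does produce the correct constant $1/2$. But $\bar{d}_B$ is defined by a sup/inf formula, not as the length metric of a Finsler norm, and it is not automatic (indeed, it is a nontrivial question in general) that $d_B$ or its $\bC$-quotient coincides with the intrinsic path metric of its infinitesimal form. Agreement of infinitesimal norms only gives the inequality $\bar{d}_B\le\tfrac12 d_H$ (by integrating along hyperbolic geodesics); the reverse inequality requires exhibiting, for each pair of points, objects realizing the sup, which is a global statement. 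There is also a secondary point you gloss over: the sup in $d_B$ runs over all nonzero objects, so one must reduce it to stable ones via the HN filtration before the circle picture applies. The paper avoids all of this by composing $\varphi$ with the orientation-reversing isometry $\beta+\sqrt{-1}\alpha\mapsto-\beta+\sqrt{-1}\alpha$ to land in Woolf's coordinates and then quoting \cite[Proposition 4.1]{Woo}, which computes $\bar{d}_B$ on $\Stab(X)/\bC$ globally and identifies it with $\tfrac12 d_H$. To make your argument complete you should either cite that result directly (in which case your Finsler computation becomes a consistency check rather than a proof) or supply the global lower bound by an explicit choice of test objects; the appeal to \cite{Kik-curvature} as you state it does not by itself establish that $\bar{d}_B$ is a length metric.
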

\begin{proof}
By the formula (\ref{coh-Phi_P}), we have 
\[
\rho(\Phi_\mcP)
=
\begin{pmatrix}
0 & 1 \\
-1 & 0 \\
\end{pmatrix}
\text{ and }
\rho(-\otimes\mcO_C(p))
=
\begin{pmatrix}
1 & 0 \\
1 & 1 \\
\end{pmatrix}
.
\]
Hence $\Phi_\mcP$ and $-\otimes\mcO_C(p)$ generate $\Gamma(X)(\simeq{\rm  PSL}(2,\bZ))$. 
It suffices to check that these generators satisfy the equivariance condition. 

For each $\beta+\sqrt{-1}\alpha\in\bH$, we set 
\[
\tau:=
\varphi
\left(
\begin{pmatrix}
0 & 1 \\
-1 & 0 \\
\end{pmatrix}
.(\beta+\sqrt{-1}\alpha)
\right)
\in\Stab(X)/\bC. 
\]
Then direct computations show that 
\begin{eqnarray*}
Z_\tau&=&Z_{\frac{-1}{\beta+\sqrt{-1}\alpha}}\\
&=&\frac{1}{\beta+\sqrt{-1}\alpha}Z_{\beta+\sqrt{-1}\alpha}\circ\Phi_\mcP^{-1}\\
&=&\frac{1}{\beta+\sqrt{-1}\alpha}Z_{(\Phi_\mcP.\sigma_{\beta+\sqrt{-1}\alpha})}. 
\end{eqnarray*}
By Lemma \ref{curv-stab-determined-by-central}, we have 
\[
\varphi
\left(
\begin{pmatrix}
0 & 1 \\
-1 & 0 \\
\end{pmatrix}
.(\beta+\sqrt{-1}\alpha)
\right)
=
\begin{pmatrix}
0 & 1 \\
-1 & 0 \\
\end{pmatrix}
.
\varphi(\beta+\sqrt{-1}\alpha).
\]
Similarly, we can check that
\[
\varphi
\left(
\begin{pmatrix}
1 & 0 \\
1 & 1 \\
\end{pmatrix}
.(\beta+\sqrt{-1}\alpha)
\right)
=
\begin{pmatrix}
1 & 0 \\
1 & 1 \\
\end{pmatrix}
.
\varphi(\beta+\sqrt{-1}\alpha). 
\]
Thus $\varphi$ is ${\rm PSL}(2,\bZ)$-equivariant. 

Let 
\[
\varphi_-:(\bH, d_H) \xrightarrow{\sim} (\bH, d_H);~\beta+\sqrt{-1}\alpha\mapsto -\beta+\sqrt{-1}\alpha
\]
be an orientation reversing isometry, and 
\[
\varphi_W:(\bH, d_H) \xrightarrow{\sim} (\Stab(C)/\bC, \bar{d}_B);~\beta+\sqrt{-1}\alpha\mapsto
\overline{\sigma_{0,1}.
\widetilde{
\begin{pmatrix}
\alpha & \beta \\
0 & 1 \\
\end{pmatrix}
}
}
\]
an identification, 
where
$\widetilde{
\begin{pmatrix}
\alpha & \beta \\
0 & 1 \\
\end{pmatrix}
}$
is a lift of the matrix 
$
\begin{pmatrix}
\alpha & \beta \\
0 & 1 \\
\end{pmatrix}
$
via the universal covering
$
\widetilde{{\rm GL}}_+(2,\bR)\to{\rm GL}_+(2,\bR).
$
Using Lemma \ref{curv-stab-determined-by-central}, 
it is easy to check that 
\[
\overline{\sigma_{0,1}.
\widetilde{
\begin{pmatrix}
\alpha & \beta \\
0 & 1 \\
\end{pmatrix}
}
}
=\overline{\sigma_{-\beta,\alpha}}. 
\]
By Woolf's computation (\cite[Proposition 4.1]{Woo}), $\varphi_W$ is isometric (up to 
scaling by $1/2$). 
Therefore the map $\varphi=\varphi_W\circ\varphi_-$ is also isometric, which completes the proof. 
\end{proof}
Since ${\rm  PSL}(2,\bZ)$ is isomorphic to the isometry group of $(\bH, d_H)$, the following is a direct corollary of the above proposition. 
\begin{cor}\label{non-trivial}
Let $\Phi$ be an autoequivalence of $D^b(X)$. 
Then, $\Phi$ is non-trivial in $\Gamma(X)$ if and only if 
$\Phi$ acts on $\Stab(X)/\bC$ non-trivially. 
\end{cor}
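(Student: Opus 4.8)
The plan is to obtain the corollary directly from the $\mathrm{PSL}(2,\bZ)$-equivariance established in Proposition \ref{psl-equiv}, thereby reducing the whole statement to the faithfulness of the $\mathrm{PSL}(2,\bZ)$-action on the upper half plane. Concretely, for an autoequivalence $\Phi$ I would write $\bar{\Phi} \in \Gamma(X)$ for its class and set $A := \rho(\bar{\Phi}) \in \mathrm{PSL}(2,\bZ)$, and then show that $\Phi$ acts trivially on $\Stab(X)/\bC$ precisely when $A$ acts trivially on $(\bH, d_H)$, precisely when $A = \mathrm{id}$.

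One implication is immediate and has already been recorded: the group $\mcI(X) = (\Aut(X) \ltimes \Pic^0(X)) \times \bZ[1]$ acts trivially on $\Stab(X)/\bC$, so if $\Phi$ is trivial in $\Gamma(X)$, i.e.\ $\Phi \in \mcI(X)$, then it acts trivially. Contrapositively, a non-trivial action of $\Phi$ forces $\Phi$ to be non-trivial in $\Gamma(X)$, which is one half of the claimed equivalence.

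For the converse I would argue by contraposition. Assume $\Phi$ acts trivially on $\Stab(X)/\bC$. Transporting this action through the homeomorphism $\varphi \colon (\bH, d_H) \xrightarrow{\sim} (\Stab(X)/\bC, \bar{d}_B)$ of Proposition \ref{psl-equiv}, which is $\mathrm{PSL}(2,\bZ)$-equivariant, the element $A = \rho(\bar{\Phi})$ must then act trivially on $\bH$ via the variant Möbius transformation (\ref{var-Mobius}). It remains only to conclude $A = \mathrm{id}$, i.e.\ that this action is faithful.

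The single point that requires care — and the only content beyond invoking Proposition \ref{psl-equiv} — is the faithfulness of the variant action (\ref{var-Mobius}). I expect this to be the main (mild) obstacle, since a priori the conjugation built into (\ref{var-Mobius}) could fail to respect $\mathrm{PSL}(2,\bZ)$. It does respect it, however: the map $B \mapsto \left(\begin{smallmatrix} 0 & 1 \\ 1 & 0 \end{smallmatrix}\right) B \left(\begin{smallmatrix} 0 & 1 \\ 1 & 0 \end{smallmatrix}\right)$ sends $\left(\begin{smallmatrix} a & b \\ c & d \end{smallmatrix}\right)$ to $\left(\begin{smallmatrix} d & c \\ b & a \end{smallmatrix}\right)$, which has the same determinant $ad-bc$, so it preserves $\SL(2,\bZ)$ and descends to an automorphism of $\mathrm{PSL}(2,\bZ)$. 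Thus (\ref{var-Mobius}) is the standard, faithful Möbius action of $\mathrm{PSL}(2,\bZ)$ on $\bH$ precomposed with an automorphism of the group, hence is itself faithful. Consequently $A = \mathrm{id}$, so $\bar{\Phi}$ is trivial in $\Gamma(X)$ under the isomorphism $\rho$; that is, $\Phi$ is trivial in $\Gamma(X)$, completing the equivalence.
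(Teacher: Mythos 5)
Your proposal is correct and follows essentially the same route as the paper: both reduce the statement, via the $\mathrm{PSL}(2,\bZ)$-equivariance of $\varphi$ from Proposition \ref{psl-equiv}, to the faithfulness of the $\mathrm{PSL}(2,\bZ)$-action on $(\bH,d_H)$. Your explicit check that conjugation by $\left(\begin{smallmatrix}0&1\\1&0\end{smallmatrix}\right)$ preserves $\SL(2,\bZ)$, so that the variant action (\ref{var-Mobius}) is the standard faithful Möbius action precomposed with a group automorphism, is if anything a cleaner justification than the paper's one-line appeal to the isometry group of $\bH$.
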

By Lemma \ref{lem:defbarm} and the proof of Theorem \ref{thm:closure}, 
we can extend the isometry $\varphi$ 
to the isomorphism 
$\overline{\bH} \xrightarrow{\sim} \overline{\Stab(X)/\bC}$. 
Also, the action of $\Gamma(X)$ can be extended to its action by homeomorphisms on the compactification $\overline{\Stab(X)/\bC}$ by Proposition \ref{psl-equiv}. 

\subsubsection{Nielsen--Thurston classification}
The {\it translation length} $l_B(\Phi)\in\bR_{\ge0}$ of $\Phi\in\Aut(D^b(X))$
with respect to the isometric action on $(\Stab(X)/\bC,\bar{d}_B)$ is given by
\[
l_B(\Phi):=\displaystyle\inf_{\sigma\in \Stab(X)/\bC}\bar{d}_B(\sigma,\Phi.\sigma). 
\]
\begin{defin}
Let $\Phi\in\Aut(D^b(X))$ be an autoequivalence. 
\begin{enumerate}
\item
$\Phi$ is {\it elliptic} if 
there exists $\sigma\in\Stab(X)/\bC$ such that $l_B(\Phi)=\bar{d}_B(\sigma,\Phi.\sigma)$ and $l_B(\Phi)=0$, or equivalently $\Phi$ has a fixed point in $\Stab(X)/\bC$. 
\item
$\Phi$ is {\it parabolic} if $l_B(\Phi)$ does not attain its minimum. 
\item
$\Phi$ is {\it hyperbolic} if 
there exists $\sigma\in\Stab(X)/\bC$ such that $l_B(\Phi)=\bar{d}_B(\sigma,\Phi.\sigma)$ and $l_B(g)>0$.
\end{enumerate}
\end{defin}
We give the characterizations of the above trichotomy of isometries. 
\begin{prop}\label{prop-ell}
Let $\Phi\in\Aut(D^b(X))$ be an autoequivalence which is non-trivial in $\Gamma(X)$. 
The following conditions are equivalent. 
\begin{enumerate}
\item
$\Phi$ is elliptic. 
\item
$|{\rm tr}\rho(\Phi)|<2$. 
\item
$\Phi$ has a unique fixed point in $\Stab(X)/\bC$. 
\item
(periodic) There exists a positive integer $m\in\bZ_{>0}$ such that $\Phi^m\in\mcI(X)$. 
\end{enumerate}
\end{prop}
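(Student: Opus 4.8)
The plan is to transport the entire question to the upper half-plane via the equivariant isometry of Proposition \ref{psl-equiv} and then invoke the classical classification of orientation-preserving isometries of the hyperbolic plane. Write $A:=\rho(\Phi)\in{\rm PSL}(2,\bZ)$, which is non-trivial by our hypothesis and Corollary \ref{non-trivial}. Under the identification $\varphi\colon(\bH,d_H)\xrightarrow{\sim}(\Stab(X)/\bC,\bar d_B)$, the action of $\Phi$ corresponds to the variant M\"obius action (\ref{var-Mobius}) of $A$; since $\varphi$ is an isometry up to a positive scalar, whether a point is fixed, and whether the translation length $l_B$ is zero or is attained, is unaffected by the scaling. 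By Remark \ref{rem-var-Mobius} the variant action is conjugate to the usual M\"obius action of a matrix with the same trace up to sign, so all trace conditions may be read off from the standard M\"obius action.

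First I would prove $(1)\Leftrightarrow(2)\Leftrightarrow(3)$ together. A non-trivial element of ${\rm PSL}(2,\bR)$ fixes a point of $\bH$ if and only if it is elliptic, if and only if $|{\rm tr}|<2$, and in that case the interior fixed point is unique. Combining this with the reduction above, $\Phi$ has a fixed point in $\Stab(X)/\bC$ (condition (1)) precisely when $|{\rm tr}\,A|=|{\rm tr}\,\rho(\Phi)|<2$ (condition (2)); and since the interior fixed point of a non-trivial elliptic isometry is unique, (1) is then equivalent to the existence of a \emph{unique} fixed point (condition (3)). Here the non-triviality of $A$ is exactly what rules out the identity, for which every point would be fixed.

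Next I would establish $(2)\Leftrightarrow(4)$ using the arithmetic of ${\rm PSL}(2,\bZ)$. Since $\rho$ is an isomorphism $\Gamma(X)=\Aut(D^b(X))/\mcI(X)\xrightarrow{\sim}{\rm PSL}(2,\bZ)$, the condition $\Phi^m\in\mcI(X)$ for some $m\in\bZ_{>0}$ is equivalent to $A$ having finite order in ${\rm PSL}(2,\bZ)$. For a representative in ${\rm SL}(2,\bZ)$ the trace is an integer, so $|{\rm tr}\,A|<2$ forces ${\rm tr}\,A\in\{0,\pm1\}$; in each case the characteristic polynomial $\lambda^2-({\rm tr}\,A)\lambda+1$ has roots that are roots of unity, whence $A$ is of finite order. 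Conversely, a hyperbolic element ($|{\rm tr}|>2$) has real eigenvalues $\lambda,\lambda^{-1}$ with $|\lambda|>1$ and hence infinite order, while a non-trivial parabolic element ($|{\rm tr}|=2$) is conjugate to a non-trivial unipotent and so also has infinite order; thus finite order forces $|{\rm tr}\,A|<2$. This closes the cycle of equivalences.

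The main obstacle I anticipate is bookkeeping rather than deep input: one must check that the ``up to scaling'' in Proposition \ref{psl-equiv} and the conjugation in Remark \ref{rem-var-Mobius} genuinely preserve both the existence and the attainment of the infimum defining $l_B$, and that the definition of \emph{elliptic} via the translation length (an interior fixed point with $l_B(\Phi)=0$) matches the hyperbolic-geometry notion used above. Once these compatibilities are in place, the proposition reduces to the textbook trichotomy for ${\rm PSL}(2,\bR)$ acting on $\bH$ together with the integrality of the trace in ${\rm PSL}(2,\bZ)$.
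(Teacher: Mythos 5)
Your proposal is correct and follows essentially the same route as the paper: transport the action to $(\bH,d_H)$ via Proposition \ref{psl-equiv} and Remark \ref{rem-var-Mobius}, deduce $(1)\Leftrightarrow(2)\Leftrightarrow(3)$ from the classical trichotomy of M\"obius transformations, and identify $(4)$ with finite order in ${\rm PSL}(2,\bZ)$. The only difference is that you spell out the integrality-of-trace argument for $(2)\Leftrightarrow(4)$ where the paper simply cites \cite[Section 13.1]{FM}; note only that ``eigenvalues are roots of unity'' should be supplemented by the observation that they are distinct (the discriminant is negative), so the matrix is diagonalizable and hence genuinely of finite order.
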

\begin{proof}
By Remark \ref {rem-var-Mobius} (2) and Proposition \ref{psl-equiv}, 
the conditions (1), (2) and (3) are all equivalent by the classical facts on M\"{o}bius transformation, see \cite[Section 13.1]{FM}. 

These three conditions are also equivalent to the finiteness of the order of $\Phi$ in $\Gamma(X)$(\cite[Section 13.1]{FM}), i.e., to the condition (4). 
\end{proof}

\begin{prop}\label{prop-para}
Let $\Phi\in\Aut(D^b(X))$ be an autoequivalence which is non-trivial in $\Gamma(X)$. 
The following conditions are equivalent. 
\begin{enumerate}
\item
$\Phi$ is parabolic. 
\item
$|{\rm tr}\rho(\Phi)|=2$. 
\item
$\Phi$ has a unique fixed point in $\partial(\Stab(X)/\bC)$. 
\item
(reducible) There exists a spherical object $E\in D^b(X)$ and $\Psi\in\mcI(X)$ such that $\Phi(E)=\Psi(E)$. 

\end{enumerate}
\end{prop}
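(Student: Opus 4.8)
The plan is to prove $(1)\Leftrightarrow(2)\Leftrightarrow(3)$ in the same way as Proposition \ref{prop-ell}, by transporting the classical trichotomy for M\"{o}bius transformations through the equivariant isometry of Proposition \ref{psl-equiv}, and to spend the real effort on the genuinely categorical equivalence $(2)\Leftrightarrow(4)$. Throughout I would use that, since $\Phi$ is non-trivial in $\Gamma(X)\cong\mathrm{PSL}(2,\bZ)$, its image $\rho(\Phi)$ is $\neq\pm I$.

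For $(1)\Leftrightarrow(2)\Leftrightarrow(3)$ I would invoke Proposition \ref{psl-equiv}: the map $\varphi\colon(\bH,d_H)\iso(\Stab(X)/\bC,\bar{d}_B)$ is an isometry and is $\mathrm{PSL}(2,\bZ)$-equivariant, and it extends to a homeomorphism $\overline{\bH}\iso\overline{\Stab(X)/\bC}$ carrying $\partial\bH=\bP^1_{\bR}$ onto $\partial(\Stab(X)/\bC)$. Hence the isometry type of the $\Phi$-action on $\Stab(X)/\bC$ equals that of the variant M\"{o}bius transformation (\ref{var-Mobius}) attached to $\rho(\Phi)$, and by Remark \ref{rem-var-Mobius}(2) the value $|\tr\rho(\Phi)|$ is unchanged by the conjugation appearing there. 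The classical facts (\cite[Section 13.1]{FM}) then give: parabolic $\Leftrightarrow$ $|\tr\rho(\Phi)|=2$ $\Leftrightarrow$ exactly one fixed point, lying on the boundary circle. Transporting through $\varphi$ yields $(1)\Leftrightarrow(2)\Leftrightarrow(3)$, using that among the three types only parabolic elements have a single boundary fixed point (elliptic: one interior, no boundary; hyperbolic: two boundary).

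For $(2)\Leftrightarrow(4)$ I would first set up a dictionary between $\mcI(X)$-orbits of spherical objects and primitive classes in $H^{2*}(X,\bZ)$ up to sign. On one side, $\mcI(X)=(\Aut(X)\ltimes\Pic^0(X))\times\bZ[1]$ acts on $H^{2*}(X,\bZ)$ through $\{\pm1\}$: translations, automorphisms and degree-zero twists act trivially, while $[1]$ acts by $-1$ since $\ch(E[1])=-\ch(E)$. On the other side, if $E,E'$ are spherical with $\ch(E')=\pm\ch(E)$, then absorbing a shift reduces to $\ch(E')=\ch(E)$, so by Proposition \ref{prop:spherical} both are $\mu$-stable sheaves with the same coprime Chern character; choosing $\Theta\in\Aut(D^b(X))$ with $\rho(\Theta)(\ch E)=(0,1)$ turns $\Theta E,\Theta E'$ into skyscrapers $\mcO_p,\mcO_{p'}$, interchanged by a translation $t$, whence $E'=(\Theta^{-1}t^{*}\Theta)E$ with $\Theta^{-1}t^{*}\Theta$ in the kernel of $\Aut(D^b(X))\to\SL(2,\bZ)$, which is contained in $\mcI(X)$. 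Thus $E\mapsto\ch(E)$ induces a bijection from spherical objects modulo $\mcI(X)$ to primitive classes modulo $\{\pm1\}$, intertwining the $\Gamma(X)$- and $\mathrm{PSL}(2,\bZ)$-actions. Under this dictionary $(4)$ says exactly that $\rho(\Phi)$ has a primitive integral eigenvector with eigenvalue $\pm1$; combined with $\det=1$ and $\rho(\Phi)\neq\pm I$ this forces characteristic polynomial $(\lambda\mp1)^2$, hence $|\tr\rho(\Phi)|=2$, giving $(4)\Rightarrow(2)$. Conversely a parabolic $\rho(\Phi)$ has eigenvalue $\pm1$ with one-dimensional, hence primitive integral, eigenspace; realizing that class by a spherical object $E$ (which exists by Proposition \ref{prop:spherical}) gives $\ch(\Phi E)=\pm\ch(E)$, and the dictionary produces $\Psi\in\mcI(X)$ with $\Phi E=\Psi E$, proving $(2)\Rightarrow(4)$. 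I would also note that the uniqueness of the eigendirection matches the unique boundary fixed point of $(3)$.

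The hard part will be the ``only if'' half of the dictionary, namely the transitivity of $\mcI(X)$ on spherical objects of a fixed Chern character; this is the one place where the geometry of $X$ genuinely enters. My approach is to reduce to the class $(0,1)$ by an autoequivalence and use that the only stable objects of that class are the skyscrapers $\mcO_p$, together with the normality of the kernel of $\Aut(D^b(X))\to\SL(2,\bZ)$ to guarantee that the conjugated translation stays inside $\mcI(X)$. Everything else is either formal transport through $\varphi$ or elementary integer linear algebra.
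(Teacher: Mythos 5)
Your proof is correct and follows essentially the same route as the paper: (1)$\Leftrightarrow$(2)$\Leftrightarrow$(3) by transporting the classical M\"{o}bius trichotomy through the equivariant isometry of Proposition \ref{psl-equiv}, and the equivalence with (4) by realizing the primitive $\pm1$-eigenvector of $\rho(\Phi)$ as the Chern character of a $\mu$-stable sheaf and moving between stable sheaves of the same class via a translation conjugated into $\mcI(X)$. The only cosmetic differences are that the paper closes the loop as (3)$\Rightarrow$(4)$\Rightarrow$(2) rather than proving (2)$\Leftrightarrow$(4) directly, and that it justifies the transitivity of $\mcI(X)$ on spherical objects of a fixed class by citing the explicit equivalence $F_{r,d}$ of \cite{hp} rather than by your (equally valid) argument via surjectivity onto $\SL(2,\bZ)$ and normality of the kernel.
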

\begin{proof}
By Remark \ref {rem-var-Mobius} (2) and Proposition \ref{psl-equiv}, 
the conditions (1),(2) and (3) are all equivalent by the classical facts on M\"{o}bius transformation, see \cite[Section 13.1]{FM}. 

We prove $(3)\Rightarrow(4)$. 
The cohomological Fourier--Mukai transform $\Phi^H \in \SL(2, \bZ)$ fixes (up to sign) a primitive integral vector $v\in\bR^2$ 
(\cite[Section 13.1]{FM}), 
which is also a cohomology class of a spherical object. 
By Proposition \ref{prop:spherical}, we can take a $\mu$-stable sheaf $E$ satisfying $\ch(E)=v$. 
By composing with shifts if necessary, we may assume that $\Phi (E)$ is also a $\mu$-stable sheaf. 
Then since $\rk E>0$ and $\rk\Phi(E)>0$, 
we have $\ch(\Phi (E))=\ch(E)$. 
Set $\ch(E)=(r,d)$.
It is well-known that there exists an autoequivalence $F_{r,d}\in\Aut(D^b(X))$, 
which yields a one-to-one correspondence between the structure sheaf $\mcO_x$ of a point $x\in X$ and a $\mu$-stable sheaf $F_{r,d}(\mcO_x)$ of rank $r$ and degree $d$ (\cite[Proposition 3]{hp}). 
Let $f\in\Aut(X)$ be a translation satisfying 
\[
f^*(F_{r,d}^{-1}(E))=F_{r,d}^{-1}\Phi (E). 
\]
Since $f^*$ (composed with shifts) is an element of $\mcI(X)$, so is the conjugation $F_{r,d}\circ f^*\circ F_{r,d}^{-1}$. 
This autoequivalence is our desired $\Psi$. 

The assertion $(4)\Rightarrow(2)$ easily holds by the following argument. 
The condition (4) implies that 
$\Phi^H \in \SL(2, \bZ)$ fixes (up to sign) a vector $\ch(E)$, 
hence $|{\rm tr}\rho(\Phi)|=2$. 
\end{proof}

To state the characterization of hyperbolic autoequivalences, 
we recall the pseudo-Anosov property of autoequivalences. 

\begin{defin}[{\cite[Definition 4.1]{DHKK}, \cite[Definition 4.6]{Kik-curvature} and \cite[Definition 2.13]{FFHKL}}]
Let $\Phi\in\Aut(D^b(X))$ be an autoequivalence. 
\begin{enumerate}
\item
$\Phi$ is {\it pseudo-Anosov in the sense of \cite{DHKK}}
if
there exists a stability condition $\sigma_\Phi\in\Stab(X)$ and an element $\Tilde{g}_\Phi\in\widetilde{{\rm GL}}_+(2,\bR)$ such that 
\begin{enumerate}
\item
$g_\Phi=
\begin{pmatrix}
\frac{1}{r}&0\\
0&r
\end{pmatrix}
\text{ or }
\begin{pmatrix}
r&0\\
0&\frac{1}{r}
\end{pmatrix}
\in {\rm GL}_+(2,\bR)$ for some $|r|>1$. 
\item
$\Phi.\sigma_\Phi=\sigma_\Phi.\Tilde{g}_\Phi$
\end{enumerate}
where $g_\Phi\in{\rm GL}_+(2,\bR)$ is a natural projection of $\Tilde{g}_\Phi$ via $\widetilde{{\rm GL}}_+(2,\bR)\to{\rm GL}_+(2,\bR)$. 
We call $\lambda_\Phi:=|r|>1$ the {\it stretch-factor} of $\Phi$. 
\item
$\Phi$ is {\it pseudo-Anosov in the sense of \cite{FFHKL}}
if
there exists a stability condition $\sigma_\Phi\in\Stab(X)$ and $\lambda_\Phi>1$ such that for any non-zero object $E\in D^b(X)$, we have
\[
\displaystyle\limsup_{n\rightarrow\infty}\frac{1}{n}\log m_{\sigma}(\Phi^n E)
=\log\lambda_\Phi. 
\]
We call $\lambda_\Phi$ the {\it stretch-factor} of $\Phi$. 
\end{enumerate}
\end{defin}

Let 
\[
h_0(-): \Aut(D^b(X))\to\bR_{\ge0};~
\Phi\mapsto h_0(\Phi)
\]
be the categorical entropy (\cite[Definition 2.5]{DHKK}). 

\begin{prop}\label{prop-hyp}
Let $\Phi\in\Aut(D^b(X))$ be an autoequivalence which is non-trivial in $\Gamma(X)$. 
The following conditions are equivalent. 
\begin{enumerate}
\item
$\Phi$ is hyperbolic. 
\item
$|{\rm tr}\rho(\Phi)|>2$. 
\item
$\Phi$ has two fixed points in $\partial(\Stab(X)/\bC)$. 
\item
$h_0(\Phi)>0$
\item
$\Phi$ is pseudo-Anosov in the sense of \cite{DHKK}. 
\item
$\Phi$ is pseudo-Anosov in the sense of \cite{FFHKL}. 
\end{enumerate}
\end{prop}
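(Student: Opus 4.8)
The plan is to prove the cycle of implications $(1)\Rightarrow(2)\Rightarrow(5)\Rightarrow(6)\Rightarrow(4)\Rightarrow(1)$, together with $(2)\Leftrightarrow(3)$, which forces all six conditions to be equivalent. First I would dispose of $(1)\Leftrightarrow(2)\Leftrightarrow(3)$ exactly as in Propositions \ref{prop-ell} and \ref{prop-para}: by Proposition \ref{psl-equiv} the action of $\Phi$ on $(\Stab(X)/\bC,\bar d_B)\cong(\bH,d_H)$ is the isometry $\rho(\Phi)$, by Remark \ref{rem-var-Mobius}(2) the invariant governing its type is $|\tr\rho(\Phi)|$, and the classical trichotomy for isometries of $\bH$ (\cite[Section 13.1]{FM}) yields the three equivalences. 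Since every M\"obius transformation is elliptic, parabolic, or hyperbolic and of exactly one type, Propositions \ref{prop-ell}, \ref{prop-para} and the present one together form an exhaustive classification, a fact I will use repeatedly below.

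Next I would construct the pseudo-Anosov stability condition for $(2)\Rightarrow(5)$. If $|\tr\rho(\Phi)|>2$ then $\Phi^{H}\in\mathrm{SL}(2,\bZ)$ has two real eigenvalues $r,r^{-1}$ with $|r|>1$ and real eigenvectors $v_{+},v_{-}\in H^{2*}(X,\bR)$. I would choose (after possibly replacing $v_{-}$ by $-v_{-}$) an orientation-preserving $\bR$-linear isomorphism $Z\colon H^{2*}(X,\bR)\to\bC$ carrying $v_{+}$ and $v_{-}$ to the real and imaginary axes, so that $g\circ Z=Z\circ\Phi^{H}$ with $g=\mathrm{diag}(r,r^{-1})\in\mathrm{GL}_{+}(2,\bR)$. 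Since every orientation-preserving real isomorphism is the central charge of a geometric stability condition and $\Geo(X)=\Stab(X)$ is a single $\widetilde{\mathrm{GL}}_{+}(2,\bR)$-torsor, $Z$ is realized by some $\sigma_{\Phi}\in\Stab(X)$, and there is a unique $\tilde g_{\Phi}\in\widetilde{\mathrm{GL}}_{+}(2,\bR)$ with $\Phi.\sigma_{\Phi}=\sigma_{\Phi}.\tilde g_{\Phi}$. Comparing central charges shows that its projection is exactly $g$, so $\Phi$ is pseudo-Anosov in the sense of \cite{DHKK} with stretch factor $\lambda_{\Phi}=|r|$.

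For $(5)\Rightarrow(6)$ I would iterate to $\Phi^{n}.\sigma_{\Phi}=\sigma_{\Phi}.\tilde g_{\Phi}^{\,n}$ and use $m_{\sigma_{\Phi}}(\Phi^{n}E)=m_{\sigma_{\Phi}.\tilde g_{\Phi}^{-n}}(E)$, whose terms are governed by the central charge $\mathrm{diag}(r^{n},r^{-n})\circ Z$; this is precisely the setting in which \cite{DHKK} computes the mass growth. Because the eigenvalues $r,r^{-1}$ are quadratic irrationals, the line $\bR v_{-}$ contains no nonzero integral class, so every Harder--Narasimhan factor of $E$ has nonzero $v_{+}$-component and hence contributes growth of order $|r|^{n}$; thus $\limsup_{n}\tfrac1n\log m_{\sigma_{\Phi}}(\Phi^{n}E)=\log\lambda_{\Phi}$ for every $0\neq E$ (the rate being independent of the stability condition within the component, where $\bar d_B$ is finite). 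This is condition $(6)$. Then $(6)\Rightarrow(4)$ follows by applying $(6)$ to a split generator and invoking the identification of categorical entropy with the mass-growth rate (\cite{Ike}; see also \cite{DHKK, FFHKL}), giving $h_{0}(\Phi)=\log\lambda_{\Phi}>0$.

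Finally, for $(4)\Rightarrow(1)$ I would argue by contraposition using the exhaustive trichotomy: if $\Phi$ is not hyperbolic it is elliptic or parabolic. In the elliptic case $\Phi$ is periodic (Proposition \ref{prop-ell}), so $\Phi^{m}\in\mcI(X)$ for some $m$; as every element of $\mcI(X)$ (automorphisms of $X$, tensoring by degree-zero line bundles, shifts) has vanishing categorical entropy, multiplicativity gives $h_{0}(\Phi)=\tfrac1m h_{0}(\Phi^{m})=0$. In the parabolic case $\Phi$ is, up to $\mcI(X)$ and conjugation in $\Gamma(X)$, a power of $-\otimes\mcO_{X}(p)$, whose entropy vanishes by a direct computation of the (polynomial) mass growth of a generator; combined with the invariance of $h_{0}$ under composition with and conjugation by zero-entropy autoequivalences, this yields $h_{0}(\Phi)=0$. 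Hence $h_{0}(\Phi)>0$ forces $\Phi$ to be hyperbolic. I expect the main obstacle to lie at the two ends of the entropy argument: in $(2)\Rightarrow(5)$, verifying that the real-linear $Z$ genuinely defines a point of $\Geo(X)$ realizing the exact diagonal action (the orientation bookkeeping and the torsor structure of $\Geo(X)$); and the precise relationship between mass growth and categorical entropy underlying $(6)\Rightarrow(4)$ and the parabolic vanishing, which rests on \cite{Ike, DHKK, FFHKL}.
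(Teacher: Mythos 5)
Your overall strategy is sound but genuinely different from the paper's. The paper treats $(2)$ as a hub and simply cites external results for each spoke: $(2)\Leftrightarrow(4)$ is \cite[Theorem 3.1]{curve-entropy} (the Gromov--Yomdin-type identity $h_0(\Phi)=\log\rho(\Phi^H)$ for curves), $(2)\Leftrightarrow(5)$ is \cite[Proposition 4.14]{Kik-curvature}, and $(5)\Leftrightarrow(6)$ is \cite[Propositions 4.13, 4.14]{Kik-curvature} together with \cite[Proposition 3.11]{FFHKL}; only $(1)\Leftrightarrow(2)\Leftrightarrow(3)$ is argued in the text, exactly as you do. Your cycle $(2)\Rightarrow(5)\Rightarrow(6)\Rightarrow(4)\Rightarrow(1)$ re-derives the cited results from scratch, and the first three arrows are essentially correct: the construction of $\sigma_\Phi$ by diagonalizing $\Phi^H$ works because $\Stab(X)$ is a $\widetilde{\GL}_+(2,\bR)$-torsor and every orientation-preserving real isomorphism $\Lambda\otimes\bR\to\bC$ is a geometric central charge; the irrationality of the eigenvalues for $|\mathrm{tr}|>2$ does force every semistable factor to have nonzero expanding component; and $(6)\Rightarrow(4)$ only needs the easy inequality (mass growth $\leq h_0$) from \cite{Ike}. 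This buys a self-contained proof at the cost of redoing the content of the cited papers.

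The one genuine gap is the parabolic case of $(4)\Rightarrow(1)$. The principle you invoke --- that $h_0$ is invariant under composition with zero-entropy autoequivalences --- is false in general (compositions of spherical twists, each of entropy zero, can have positive entropy), so it cannot be used as stated. Your fallback, computing that a parabolic $\Phi$ has polynomial mass growth, only shows that the mass growth vanishes; to conclude $h_0(\Phi)=0$ you then need the \emph{reverse} inequality $h_0\leq$ mass growth (or $h_0=\log\rho(\Phi^H)$), which is exactly the nontrivial content of \cite[Theorem 3.1]{curve-entropy} that the paper cites for $(2)\Leftrightarrow(4)$. So either invoke that theorem directly (which makes $(4)$ equivalent to $(2)$ in one line and renders the contrapositive argument unnecessary), or replace the invariance claim by an honest bound: after conjugation and absorbing $\mcI(X)$, a parabolic $\Phi$ is a standard autoequivalence $f^*\circ(-\otimes M)\circ[k]$, for which one can bound $\sum_i\dim\Ext^i(G,\Phi^nG)$ polynomially by Riemann--Roch on the elliptic curve. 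As written, this step does not close.
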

\begin{proof}
By Remark \ref {rem-var-Mobius} (2) and Proposition \ref{psl-equiv}, 
the conditions (1),(2) and (3) are all equivalent by the classical facts on M\"{o}bius transformation, see \cite[Section 13.1]{FM}. 

The conditions (2) and (4) are equivalent by \cite[Theorem 3.1]{curve-entropy}. 
The conditions (2) and (5) are equivalent by \cite[Proposition 4.14]{Kik-curvature}. 
The conditions (5) and (6) are equivalent by \cite[Proposition 4.13, 4.14]{Kik-curvature} and \cite[Proposition 3.11]{FFHKL}. 
\end{proof}
Note that the categorical entropy of hyperbolic autoequivalences of elliptic curves
is equal to the translation length, the stretch factor, the mass-growth \cite[Lemma 4.15]{Kik-curvature} and the spectral radius \cite[Theorem 3.1]{curve-entropy}. 

\vspace{2mm}
The above three propositions (Proposition \ref{prop-ell}, \ref{prop-para} and \ref{prop-hyp}) and Corollary \ref{non-trivial} imply the following: 
\begin{thm}[Nielsen--Thurston classification]\label{thm-NT}
Each autoequivalence of $D^b(X)$
which acts on $\Stab(X)/\bC$ non-trivially 
is of exactly one of the following types: periodic, reducible, or pseudo-Anosov. 
\end{thm}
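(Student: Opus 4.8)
The plan is to assemble the statement formally from the three characterization propositions, using the classical trichotomy for elements of ${\rm PSL}(2,\bZ)$ as the organizing principle. First I would take an arbitrary autoequivalence $\Phi \in \Aut(D^b(X))$ acting non-trivially on $\Stab(X)/\bC$. By Corollary \ref{non-trivial}, this is equivalent to $\Phi$ being non-trivial in $\Gamma(X)$, so its image $\rho(\Phi) \in {\rm PSL}(2,\bZ)$ under the isomorphism $\rho$ is not the identity. The key observation is that $|\tr \rho(\Phi)|$ is a well-defined non-negative integer: the trace of an element of ${\rm PSL}(2,\bZ)$ is defined only up to sign, but its absolute value is unambiguous, and by Remark \ref{rem-var-Mobius} (2) it is moreover unchanged by the conjugation appearing in the variant M\"obius action. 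Consequently exactly one of the three conditions $|\tr \rho(\Phi)| < 2$, $|\tr \rho(\Phi)| = 2$, $|\tr \rho(\Phi)| > 2$ holds, and these are mutually exclusive and jointly exhaustive precisely because the trace is an integer.

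Next I would match each trace range with a Nielsen--Thurston type. The equivalence of conditions (2) and (4) in Proposition \ref{prop-ell} identifies $|\tr \rho(\Phi)| < 2$ with $\Phi$ being periodic (i.e.\ $\Phi^m \in \mcI(X)$ for some $m \in \bZ_{>0}$); the equivalence of (2) and (4) in Proposition \ref{prop-para} identifies $|\tr \rho(\Phi)| = 2$ with $\Phi$ being reducible; and the equivalence of (2) with (5) and (6) in Proposition \ref{prop-hyp} identifies $|\tr \rho(\Phi)| > 2$ with $\Phi$ being pseudo-Anosov (in either sense). Since the three trace conditions partition the non-identity elements of ${\rm PSL}(2,\bZ)$, pulling this partition back through $\rho$ shows that the three categorical types are mutually exclusive and exhaustive, so each such $\Phi$ is of exactly one type.

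I do not expect any serious obstacle in this final step, since the entire analytic and categorical content has already been absorbed into Propositions \ref{prop-ell}, \ref{prop-para} and \ref{prop-hyp}, and ultimately into the isometric ${\rm PSL}(2,\bZ)$-equivariance of Proposition \ref{psl-equiv}. The only point needing care is the bookkeeping: verifying that the non-triviality hypothesis, via Corollary \ref{non-trivial}, genuinely excludes the identity so that no autoequivalence escapes the three trace ranges, and confirming that the integrality of $\tr \rho(\Phi)$ makes the trichotomy both exclusive and exhaustive --- a step that would fail over $\bR$ but is automatic over $\bZ$.
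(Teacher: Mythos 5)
Your proposal is correct and is essentially the argument the paper intends: the theorem is deduced directly from Propositions \ref{prop-ell}, \ref{prop-para}, \ref{prop-hyp} and Corollary \ref{non-trivial}, with the integrality of $|{\rm tr}\,\rho(\Phi)|$ providing the exclusive and exhaustive trichotomy exactly as you describe. The paper leaves this assembly implicit, so your write-up simply makes the same reasoning explicit.
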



\section{The case of the projective line}
\label{sec:P1}
We first recall a standard fact on stability conditions on $\bP^1$. 
\begin{prop}[{\cite[Corollary 3.4]{Oka}}] \label{prop:P1heart}
A heart admitting a stability condition on $\Stab(\bP^1)$ 
is of the following form: 
\begin{equation}
\mcA_j:={\rm Coh}(\bP^1)[j]
\end{equation}
or
\begin{equation}
\mcA_{p,i,j}:=\langle \mcO_{\bP^1}(i-1)[p+j], \mcO_{\bP^1}(i)[j]\rangle_{ex}
\end{equation}
for some $i,j\in\bZ$ and $p\in\bZ_{>0}$. 
In the latter case, 
the only stable objects in the heart are 
$\mcO_{\bP^1}(i-1)[p+j]$ and $\mcO_{\bP^1}(i)[j]$, and 
if $p\geq 2$, any object in the heart $\mcA_{p,i,j}$ is 
of the form 
$\mcO_{\bP^1}(i-1)[p+j]^{\oplus k} \oplus \mcO_{\bP^1}(i)^{\oplus l}[j]$ 
for some $k, l \in \bZ_{\geq 0}$. 
\end{prop}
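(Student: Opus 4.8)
The plan is to determine the heart $\mcA := \mcP((0,1])$ of a given $\sigma = (Z,\mcP) \in \Stab(\bP^1)$ by pinning down its simple objects, exploiting the fact that $\bP^1$ carries very few indecomposables. First I would recall that every coherent sheaf on $\bP^1$ splits as a direct sum of line bundles $\mcO_{\bP^1}(n)$ and finite-length sheaves, so that every indecomposable object of $D^b(\bP^1)$ is a shift of a line bundle or of an indecomposable torsion sheaf. Since a $\sigma$-stable object is indecomposable and the simple objects of $\mcA$ are in particular $\sigma$-stable, each simple of $\mcA$ is a shift of a line bundle or of a torsion sheaf.

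Next I would split into two cases according to the behaviour of the skyscrapers $\mcO_x$, $x \in \bP^1$, all of which share the numerical class $(0,1)$ and hence the same central charge $Z(\mcO_x)$. In the \emph{geometric} case, where all $\mcO_x$ are $\sigma$-stable, they have a common phase, and I would invoke Macr\`i's description of geometric stability conditions (\cite{Mac}) together with the normalisation afforded by the $\widetilde{\GL}_+(2,\bR)$-action to conclude that $\mcA$ is a shift $\Coh(\bP^1)[j] = \mcA_j$ or a one-step tilt of such, which is of the form $\mcA_{1,i,j}$ (the ordering of phases that keeps the $\mcO_x$ stable). In the remaining \emph{exceptional} case no skyscraper is a simple object, and the heart turns out to be of finite length with its simples shifts of line bundles.

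In the exceptional case I would pin down the simples purely cohomologically. Distinct simples $S,S'$ satisfy $\Hom(S,S') = 0$ and $\Hom^{<0}(S,S') = 0$; writing two line-bundle simples as $\mcO_{\bP^1}(a)[s],\mcO_{\bP^1}(b)[t]$ and using $\Ext^k(\mcO(a),\mcO(b)) = H^k(\mcO_{\bP^1}(b-a))$ (nonzero only for $k=0,\ b \geq a$ or $k=1,\ b \leq a-2$), a short case analysis forces $|a-b| = 1$ and the shift difference to be a positive integer $p$; the same computation shows that three pairwise-admissible line bundles cannot coexist, so there are exactly two simples $\mcO_{\bP^1}(i-1)[p+j]$ and $\mcO_{\bP^1}(i)[j]$, i.e. $\mcA = \mcA_{p,i,j}$. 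Finally I would compute $\Ext^1(S_1,S_2) = \bC^2$ when $p=1$ and $\Ext^1(S_1,S_2) = \Ext^1(S_2,S_1) = 0$ when $p \geq 2$; in the latter case there are no nontrivial extensions between the simples, so $\mcA_{p,i,j}$ is semisimple and every object is $\mcO_{\bP^1}(i-1)[p+j]^{\oplus k}\oplus\mcO_{\bP^1}(i)^{\oplus l}[j]$. That only $S_1$ and $S_2$ are $\sigma$-stable then follows from the vanishing of $\Ext^1(S_2,S_1)$ together with the sign of $\phi(S_1)-\phi(S_2)$ imposed by $Z(S_1),Z(S_2)\in\bH_-$, which makes every nontrivial extension of the two simples strictly unstable.

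The hard part will be the second step: establishing the sharp dichotomy for rank-zero objects and, in the non-geometric case, that $\mcA$ is of finite length with exactly two line-bundle simples. Concretely I must rule out the intermediate possibilities that some but not all $\mcO_x$ are stable, that a fat point $\mcO_{kx}$ occurs as a simple object, or that infinitely many line bundles pile up in a single phase window. Since all torsion sheaves lie on one ray of the central charge, their Harder--Narasimhan behaviour is delicate, and controlling it requires the support property together with the special geometry of $\bP^1$ (all points projectively equivalent and $\Pic(\bP^1) = \bZ$). This is exactly the step where the geometry of $\bP^1$ enters essentially, and it is what separates the two families $\mcA_j$ and $\mcA_{p,i,j}$.
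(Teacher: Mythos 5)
The paper does not prove Proposition \ref{prop:P1heart} at all: it is quoted verbatim from Okada (\cite[Corollary 3.4]{Oka}), and the only trace of an argument in the source is a commented-out lemma handling the final two claims (the list of stable objects and the semisimplicity for $p\geq 2$). So the relevant question is whether your sketch would actually constitute a proof of the cited classification, and it would not. You have correctly identified where the difficulty lies, but you have only \emph{named} that difficulty rather than resolved it. The entire content of Okada's theorem is the step you defer: showing that if some $\mcO_x$ fails to be $\sigma$-stable, then the heart is of finite length with exactly two simples, both shifts of line bundles. The mechanism that makes this work on $\bP^1$ is the semiorthogonal decomposition $D^b(\bP^1)=\langle \mcO_{\bP^1}(-1),\mcO_{\bP^1}\rangle$, packaged in the paper as Lemma \ref{lem:generateP1}: every triangle $A\to\mcO_x\to B$ with $\Hom^{\leq 0}(A,B)=0$ is of the form $\mcO_{\bP^1}(k+1)\to\mcO_x\to\mcO_{\bP^1}(k)[1]$, so the Harder--Narasimhan filtration of an unstable $\mcO_x$ has exactly these two factors, and one then has to show these two objects are themselves stable, lie in adjacent phases, and that their extension closure exhausts $\mcP((0,1])$. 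None of this appears in your proposal; your paragraph three (which is correct, and whose $\Ext$-computations do establish the ``moreover'' clauses) only applies \emph{after} one knows the simples are finitely many shifts of line bundles.

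Two further points. First, your closing argument for the $p=1$ case is wrong as stated: the condition $Z(S_1),Z(S_2)\in\bH_-$ does \emph{not} impose a sign on $\phi(S_1)-\phi(S_2)$, and in fact for the ordering $\phi(\mcO_{\bP^1}(i-1)[1+j])>\phi(\mcO_{\bP^1}(i)[j])$ the skyscrapers $\mcO_x$ \emph{are} stable objects of the heart $\mcA_{1,i,j}$ (such stability conditions are geometric ones presented with a tilted heart). The claim that only the two simples are stable requires the additional hypothesis that $\sigma$ is not geometric, which forces $\phi(\mcO_{\bP^1}(i)[j])\geq\phi(\mcO_{\bP^1}(i-1)[1+j])$; this hypothesis is explicit in the paper's commented-out lemma and is implicitly needed in the Proposition as well. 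Second, in your geometric case you should justify that a one-step tilt of $\Coh(\bP^1)$ at an interior phase really coincides with some $\mcA_{1,i,j}$; this again uses the Euler-sequence triangles of Lemma \ref{lem:generateP1} and is not automatic from Macr\`i's result alone. In short: the outline follows the right strategy, but the proposal leaves the theorem's essential step unproved and misstates the hypothesis under which the list of stable objects is correct.
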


We will use the following easy lemma: 
\begin{lem}[{\cite[Lemma 3.2]{BMW}}] \label{lem:generateP1}
For any $n,k\in\bZ$, there exist exact triangles in $D^b(\bP^1)$: 
\begin{align}
    &\mcO_{\bP^1}(k+1)^{\oplus n-k} \to \mcO_{\bP^1}(n) \to \mcO_{\bP^1}(k)[1]^{\oplus n-k-1} 
    \quad \mbox{ if } k<n-1, \label{eq:exn>0}\\
    &\mcO_{\bP^1}(k+1)[-1]^{\oplus k-n} \to \mcO_{\bP^1}(n) \to \mcO_{\bP^1}(k)^{\oplus k-n+1} 
    \quad \mbox{ if } k>n, \label{eq:exn<0} \\
    &\mcO_{\bP^1}(k+1) \to \mcO_x \to \mcO_{\bP^1}(k)[1] \quad \mbox{ for } x \in \bP^1. 
    \label{eq:exOx}
\end{align}
Moreover, any exact triangle $A\to M\to B$ with $M$ either $\mcO_{\bP^1}(n)$ or $\mcO_x$ and $\Hom^{\le0}(A,B)=0$ is of one of the above forms.
\end{lem}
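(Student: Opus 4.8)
The plan is to establish the three exact triangles by direct constructions and then to obtain the final (``moreover'') assertion by a structural analysis of objects in $D^b(\bP^1)$. Three standard facts would be used throughout: Grothendieck's theorem that every vector bundle on $\bP^1$ is a direct sum of line bundles; the fact that $D^b(\bP^1)$ is hereditary, so that every object $E$ splits as $\bigoplus_i H^i(E)[-i]$ and every coherent sheaf splits as a torsion sheaf plus a sum of line bundles; and the computation of the syzygy bundle, namely that for $d\ge 1$ the evaluation map $\mathrm{ev}\colon H^0(\mcO_{\bP^1}(d))\otimes\mcO_{\bP^1}\to\mcO_{\bP^1}(d)$ is surjective with kernel $\mcO_{\bP^1}(-1)^{\oplus d}$ (one checks the kernel has no global sections, so all of its line-bundle summands have negative degree, and then matches rank and degree).

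For \eqref{eq:exn>0}, assuming $k<n-1$, I would pick a basis of $\Hom(\mcO_{\bP^1}(k+1),\mcO_{\bP^1}(n))=H^0(\mcO_{\bP^1}(n-k-1))$ to build a morphism $\phi\colon\mcO_{\bP^1}(k+1)^{\oplus(n-k)}\to\mcO_{\bP^1}(n)$. Twisting $\phi$ by $\mcO_{\bP^1}(-k-1)$ identifies it with the evaluation map for $\mcO_{\bP^1}(n-k-1)$; since $n-k-1\ge1$, this map is surjective with kernel $\mcO_{\bP^1}(-1)^{\oplus(n-k-1)}$, so after twisting back $\phi$ is surjective with kernel $\mcO_{\bP^1}(k)^{\oplus(n-k-1)}$. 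The resulting short exact sequence $0\to\mcO_{\bP^1}(k)^{\oplus(n-k-1)}\to\mcO_{\bP^1}(k+1)^{\oplus(n-k)}\to\mcO_{\bP^1}(n)\to0$ rotates to \eqref{eq:exn>0}. Then \eqref{eq:exn<0} follows by applying the derived dual $\RHom(-,\mcO_{\bP^1})$, which sends $\mcO_{\bP^1}(j)$ to $\mcO_{\bP^1}(-j)$ and preserves short exact sequences of vector bundles, to the sequence just obtained with indices $(-n,-k-1)$ in place of $(n,k)$; the hypothesis $-k-1<-n-1$ is exactly $k>n$. Finally \eqref{eq:exOx} is the rotation of the standard point sequence $0\to\mcO_{\bP^1}(k)\to\mcO_{\bP^1}(k+1)\to\mcO_x\to0$.

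For the characterization, write the triangle as $A\to M\to B\xrightarrow{+1}$ and take the long exact sequence of cohomology sheaves. Since $M\in\{\mcO_{\bP^1}(n),\mcO_x\}$ is concentrated in degree $0$, it identifies $H^i(B)\cong H^{i+1}(A)$ for all $i\ne 0,-1$ and confines the nontrivial cohomology of $A$ and $B$ to a short window around degree $0$; heredity then lets me write $A$ and $B$ as direct sums of shifts of line bundles and torsion sheaves supported in that window. For summands $\mcO_{\bP^1}(a)[\alpha]$ of $A$ and $\mcO_{\bP^1}(b)[\beta]$ of $B$, the hypothesis $\Hom^{\le0}(A,B)=0$ amounts to $\Ext^{\le\beta-\alpha}(\mcO_{\bP^1}(a),\mcO_{\bP^1}(b))=0$; since on $\bP^1$ only $\Ext^0$ (when $b\ge a$) and $\Ext^1$ (when $b\le a-2$) can be nonzero, this rules out the degree/twist combinations in which $A$ would map nontrivially into $B$. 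Feeding these vanishings back into the requirement that the cone be the indecomposable object $M$, I expect $A$ to collapse to a sum of copies of a single $\mcO_{\bP^1}(k+1)$ and $B$ to a sum of copies of $\mcO_{\bP^1}(k)$ placed one cohomological degree apart (respectively the two sheaves of the point sequence). Matching the class $[M]=[A]+[B]$ in $K(\bP^1)$, i.e.\ rank and degree, then fixes the multiplicities and reproduces precisely the three listed forms.

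The hard part will be this last characterization: the existence of the triangles is essentially classical, but pinning down the exact multiplicities requires careful bookkeeping of which $\Hom$- and $\Ext^1$-groups between the line-bundle and torsion summands of $A$ and $B$ must vanish under $\Hom^{\le0}(A,B)=0$, and ruling out every ``mixed'' configuration---for instance torsion summands when $M=\mcO_{\bP^1}(n)$, or line bundles other than the consecutive pair $\mcO_{\bP^1}(k),\mcO_{\bP^1}(k+1)$. I expect this to reduce to a short finite case analysis organized by the cohomological degrees in which the summands of $A$ and $B$ are placed.
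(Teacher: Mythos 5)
Your constructions of the three triangles are complete and correct (the twist-to-the-evaluation-map argument, the syzygy computation $\Ker(\mathrm{ev})\simeq\mcO_{\bP^1}(-1)^{\oplus d}$, and the dualization trick for \eqref{eq:exn<0} all check out numerically), but your route differs from the paper's. The paper gives essentially a one-line indication, deferring to \cite[Lemma 3.2]{BMW}: one computes the factors of $\mcO_{\bP^1}(n)$ and $\mcO_x$ with respect to the semiorthogonal decomposition $D^b(\bP^1)=\langle \mcO_{\bP^1}(-1), \mcO_{\bP^1}\rangle$ and its mutations, which produces existence and the uniqueness statement simultaneously, since a decomposition $A\to M\to B$ with $\Hom^{\le 0}(A,B)=0$ is exactly the (shifted, mutated) projection triangle and such filtrations are unique up to isomorphism. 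Your approach is more elementary and self-contained but pushes all the work into the ``moreover'' part, which you only sketch. Two points there need tightening. First, the confinement of the cohomology sheaves of $A$ and $B$ to a two-degree window is \emph{not} a consequence of the long exact sequence alone: the sequence gives $H^{i-1}(B)\cong H^i(A)$ for $i\neq 0,1$, and you must then invoke $\Hom^{-1}(A,B)\supseteq\Hom(H^i(A),H^{i-1}(B))\ni\mathrm{id}$ to force these to vanish. Second, the engine of the case analysis should be stated explicitly: on $\bP^1$ one has $\mathbf{R}\Hom(\mcO(a),\mcO(b))=0$ if and only if $b=a-1$, and $\mathbf{R}\Hom$ between a line bundle and a torsion sheaf (in either order) never vanishes; your hypothesis forces $\mathbf{R}\Hom$ to vanish between the relevant pairs of cohomology sheaves, which kills all torsion summands when $M=\mcO_{\bP^1}(n)$ and forces all line-bundle summands of $A$ to share a single twist $k+1$ and those of $B$ the consecutive twist $k$; the $K$-theory matching then fixes the multiplicities as you say. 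You should also dispose of the degenerate triangles $M\to M\to 0$ and $0\to M\to M$, which satisfy the hypothesis vacuously and arise as the boundary cases of the listed forms. With these points written out, your argument is a valid and arguably more transparent alternative to the factor computation via the exceptional collection.
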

A proof of the above lemma is given by calculating factors of objects via the semi-orthogonal decomposition 
$D^b(\bP^1)=\langle \mcO_{\bP^1}(-1), \mcO_{\bP^1} \rangle$.


\subsection{Non-injectivity of $\bP m$}
We show that the map $\bP m$
fails to be injective in the case of the projective line. 
\begin{prop}\label{non-injectivity-p^1}
The map 
\[
\bP m \colon \Stab(\bP^1)/\bC \to \bP^{\mcS}_{\geq 0}
\]
is NOT injective 
for any choice of a set 
$\mcS \subset \Ob(D^b(\bP^1))$. 
\end{prop}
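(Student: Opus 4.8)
The plan is to exhibit two distinct points of $\Stab(\bP^1)/\bC$ on which the mass functions agree for \emph{every} object of $D^b(\bP^1)$; since such a coincidence is insensitive to the choice of $\mcS$, this settles non-injectivity for all $\mcS$ simultaneously. Following the hint of Remark~\ref{rem:non-inj}, I would seek these points among the non-geometric stability conditions attached to the semiorthogonal decomposition $D^b(\bP^1)=\langle \mcO_{\bP^1}(-1),\mcO_{\bP^1}\rangle$. Concretely, fix $p\geq 2$ and work with the heart $\mcA_{p,0,0}=\langle \mcO_{\bP^1}(-1)[p],\mcO_{\bP^1}\rangle$ of Proposition~\ref{prop:P1heart}. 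By that proposition every object of this heart is a direct sum of copies of the two stable objects $S_1:=\mcO_{\bP^1}$ and $S_2:=\mcO_{\bP^1}(-1)[p]$, so the heart is of finite length and has no non-split extensions between its two simples.

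The stability conditions with this heart form the chamber $\{Z:\ Z(\mcO_{\bP^1})\in\bH_-,\ Z(\mcO_{\bP^1}(-1))\in\bH_-\}$, an open, connected, real $4$-dimensional region. Within it I would choose two stability conditions $\sigma,\sigma'$ with the same heart for which $Z_{\sigma}(\mcO_{\bP^1})=Z_{\sigma'}(\mcO_{\bP^1})$ while $Z_{\sigma}(\mcO_{\bP^1}(-1))$ and $Z_{\sigma'}(\mcO_{\bP^1}(-1))$ have equal modulus but different argument. The heart of the computation is then the claim that for \emph{any} $0\neq E\in D^b(\bP^1)$ one has
\[
m_{\sigma}(E)=n_1(E)\,\bigl|Z_\sigma(\mcO_{\bP^1})\bigr|+n_2(E)\,\bigl|Z_\sigma(\mcO_{\bP^1}(-1))\bigr|,
\]
with non-negative integers $n_1(E),n_2(E)$ that are \emph{independent of the stability condition in this chamber}. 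Indeed, since all semistable objects are direct sums of shifts of $S_1$ and $S_2$, the Harder--Narasimhan factors of $E$ are such shifts, and because $|Z(S_i[n])|=|Z(S_i)|$ the mass is the corresponding integral combination of $|Z(\mcO_{\bP^1})|$ and $|Z(\mcO_{\bP^1}(-1))|$; the multiplicities $n_1,n_2$ are read off by resolving $E$ through the exceptional pair via Lemma~\ref{lem:generateP1}, and they do not vary over the chamber because the collection of semistable objects does not vary there. As $\sigma$ and $\sigma'$ share the values $|Z(\mcO_{\bP^1})|$ and $|Z(\mcO_{\bP^1}(-1))|$, this yields $m_\sigma(E)=m_{\sigma'}(E)$ for all $E$.

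It then remains to check $\overline{\sigma}\neq\overline{\sigma'}$ in $\Stab(\bP^1)/\bC$. If $\sigma'=\sigma.\lambda$, then $Z_{\sigma'}=\exp(-\sqrt{-1}\pi\lambda)\,Z_\sigma$, and evaluating on $\mcO_{\bP^1}$ forces $\exp(-\sqrt{-1}\pi\lambda)=1$, hence $Z_{\sigma'}=Z_\sigma$, contradicting $\arg Z_\sigma(\mcO_{\bP^1}(-1))\neq \arg Z_{\sigma'}(\mcO_{\bP^1}(-1))$. So $\overline{\sigma}$ and $\overline{\sigma'}$ are distinct points with $m_\sigma=m_{\sigma'}$ on all objects, giving $\bP m(\overline{\sigma})=\bP m(\overline{\sigma'})$ for every $\mcS\subset\Ob(D^b(\bP^1))$.

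I expect the only delicate point to be the phase-independence of $n_1(E),n_2(E)$: one must verify, using $p\geq 2$ so that the simples admit no extensions and the shifts $S_2[m]$ are separated in phase from $S_1$, that the Harder--Narasimhan ordering — and therefore the factor count — is unchanged as the argument of $Z(\mcO_{\bP^1}(-1))$ moves across the chamber. This is exactly where the rigidity provided by Proposition~\ref{prop:P1heart} and Lemma~\ref{lem:generateP1} is needed, and it is why one restricts to $p\geq 2$ rather than the extension-carrying case $p=1$.
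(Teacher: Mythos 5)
Your proof is correct and takes essentially the same route as the paper's: both work with the heart $\mcA_{p,i,0}$ for $p\geq 2$, whose objects are direct sums of the two exceptional simples, and compare two stability conditions whose central charges agree in modulus but not in argument on one of the simples, so that all masses coincide. You are in fact slightly more thorough than the paper, which records the mass equality only for objects of the heart (rather than all of $D^b(\bP^1)$) and leaves the distinctness of the two classes in $\Stab(\bP^1)/\bC$ implicit.
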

\begin{proof}
Let us consider the heart 
\[
\mcA_{p,i}:=\mcA_{p,i,0}
=\left\langle 
\mcO_{\bP^1}(i-1)[p], \mcO_{\bP^1}(i)
\right\rangle
\subset D^b(\bP^1), 
\]
and take elements $\zeta, \eta_j \in \bH$ 
($j=1, 2$) satisfying 
\[
|\eta_1|=|\eta_2|, \quad 
\arg(\eta_1) \neq \arg(\eta_2). 
\]
For $j=1, 2$, 
we define the central charge functions 
$Z_j \colon K(\mcA) \to \bC$ by the formula 
\[
Z_j\left(\mcO_{\bP^1}(i-1)[p] \right)
:=\zeta, \quad 
Z_j\left(\mcO_{\bP^1}(i)\right)
:=\eta_j. 
\]
Then $\sigma_j=(Z_j, \mcA)$ 
are stability conditions on $D^b(\bP^1)$. 
By Proposition \ref{prop:P1heart}, 
we conclude that 
\[
{m}_{\sigma_1}(E_{k, l})
=k|\zeta|+l|\eta_1|
=k|\zeta|+l|\eta_2|
={m}_{\sigma_2}(E_{k, l})
\]
for all $k, l \geq 0$, i.e., 
${\bP m}(\overline{\sigma}_1)
={\bP m}(\overline{\sigma}_2)$ 
for any choice of $\mcS$. 
\end{proof}

\begin{rmk}\label{rem:non-inj}
We can show that the above non-injectivity result holds for a general triangulated category with a full strong exceptional collection. 
\end{rmk}

\subsection{Explicit description of the image of $\bP m$}
\label{subsection-exp-im-p1}
In this subsection, 
we describe the image of $\bP m$ explicitly. 

Recall from Proposition \ref{prop:P1heart} that 
for any stability condition $\sigma \in \Stab(\bP^1)$, 
its heart is either $\mcA_j$ or $\mcA_{p, i, j}$ 
for some $p, i, j \in \bZ$ with $p >0$. 
We therefore decompose the space $\Stab(\bP^1)/\bC$ as follows: 
\begin{align*}
&\Stab(\bP^1)/\bC=\left( \Geo(\bP^1)/\bC \right) \bigsqcup 
\Big(\bigsqcup_{i \in \bZ} \mcH_i \Big), \\
&\mcH_i \coloneqq \left\{
\overline{\sigma} \in \Stab(\bP^1)/\bC 
~\middle|~
\begin{aligned}
&\sigma \mbox{ is not geometric}, \\
&\mbox{the heart of } \sigma \mbox{ is } 
\mcA_{p,i,0} \mbox{ for some } p >0
\end{aligned}
\right\}. 
\end{align*}

For a point $x \in\bP^1$, we set $\mcS_3:=\{\mcO_x,\mcO_{\bP^1}(-1),\mcO_{\bP^1}\}$ and consider the map
\[
\bP m \colon \Stab(\bP^1)/\bC \to \bP^{\mcS_3}_{\geq 0}. 
\]
Then by the proof of Proposition \ref{non-injectivity-p^1}, $\bP m$ is not injective. 
Recall that we have already determined 
$\bP m(\Geo(\bP^1)/\bC) \subset \bP^{\mcS_3}_{\geq0}$
in \S4.3. 
In the following, we describe the image of $\mcH_i$ for each $i \in \bZ$. 

We define a subset $\Delta_i\subset\bR\bP^2_{\ge0}$ for each $i\in\bZ$ as follows: 
\begin{eqnarray*}
\Delta_i&:=&
\{[1:X:Y]\in\bR\bP^2_{\ge0}\mid~Y=X-1,~i+1<x<i+2\}
\quad\mbox{ for }i\ge0\\
\Delta_{-1}&:=&
\{[1:X:Y]\in\bR\bP^2_{\ge0}\mid~Y=-X+1,~0<x<1\}\\
\Delta_i&:=&
\{[1:X:Y]\in\bR\bP^2_{\ge0}\mid~Y=X+1,~-i-2<x<-i-1\}
\quad\mbox{ for }i\le-2 
\end{eqnarray*}
For the subset $\Delta\subset\bR\bP^2_{\ge0}$ in \S4.3,
we clearly have
\[
\bigsqcup_{i\in\bZ}\Delta_i\subsetneq\overline{\Delta}\backslash\Delta. 
\]

Identifying $\bP^{\mcS_3}_{\ge0}$ with $\bR\bP^2_{\ge0}$,
we obtain the following result. 
\begin{thm}\label{thm-explicit-im-p^1}
The image $\bP m(\mcH_i)$ is equal to $\Delta_i$ for each $i\in\bZ$. 
\end{thm}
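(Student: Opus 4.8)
The plan is to compute, for every $\sigma \in \mcH_i$, the three masses $m_\sigma(\mcO_x)$, $m_\sigma(\mcO_{\bP^1}(-1))$, $m_\sigma(\mcO_{\bP^1})$ explicitly and to read off the resulting point of $\bR\bP^2_{\geq 0}$. Fix $i$ and a representative $\sigma$ whose heart is $\mcA_{p,i,0}$ for some $p>0$. By Proposition \ref{prop:P1heart} the only $\sigma$-stable objects of this heart are $\mcO_{\bP^1}(i-1)[p]$ and $\mcO_{\bP^1}(i)$; write $\zeta:=Z_\sigma(\mcO_{\bP^1}(i-1)[p])$, $\eta:=Z_\sigma(\mcO_{\bP^1}(i))$ and set $s:=|\zeta|=m_\sigma(\mcO_{\bP^1}(i-1))$, $t:=|\eta|=m_\sigma(\mcO_{\bP^1}(i))$, so $s,t>0$. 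Since the $\bC$-action rescales all masses simultaneously and a shift leaves $|Z|$ unchanged, $\bP m(\overline\sigma)$ depends only on the ratio $s:t$, in particular not on $p$.

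The key step is a mass formula: for $M\in\{\mcO_x,\mcO_{\bP^1}(-1),\mcO_{\bP^1}\}$ I claim $m_\sigma(M)=|a|\,s+|b|\,t$, where $[M]=a[\mcO_{\bP^1}(i-1)]+b[\mcO_{\bP^1}(i)]$ in $K(\bP^1)$. Here $\{[\mcO_{\bP^1}(i-1)],[\mcO_{\bP^1}(i)]\}$ is a $\bZ$-basis, and solving $\ch(\mcO_{\bP^1}(n))=a\,\ch(\mcO_{\bP^1}(i-1))+b\,\ch(\mcO_{\bP^1}(i))$ gives $a=i-n$, $b=n-i+1$ (and $[\mcO_x]=-[\mcO_{\bP^1}(i-1)]+[\mcO_{\bP^1}(i)]$). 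To prove the formula I would feed the relevant triangle of Lemma \ref{lem:generateP1} into the slicing of $\sigma$: (\ref{eq:exn<0}) when $n<i-1$, (\ref{eq:exn>0}) when $n>i$, and (\ref{eq:exOx}) for $\mcO_x$. Each triangle presents $M$ as a two-step extension $P\to M\to Q$ in which $P$ is a direct sum of $|b|$ shifts of $\mcO_{\bP^1}(i)$ and $Q$ a direct sum of $|a|$ shifts of $\mcO_{\bP^1}(i-1)$ (or vice versa), hence both are $\sigma$-semistable. A short phase computation shows $\phi_\sigma(P)>\phi_\sigma(Q)$: for $p\geq 2$ this is automatic since the shifts differ by at least one, while for $p=1$ it is exactly the non-geometricity hypothesis $\phi_\sigma(\mcO_{\bP^1}(i))>\phi_\sigma(\mcO_{\bP^1}(i-1))$ (equivalently, $\mcO_x$ being $\sigma$-unstable). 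Thus the triangle is the Harder--Narasimhan filtration of $M$, no masses cancel, and $m_\sigma(M)=|a|\,s+|b|\,t$ follows.

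With the formula in hand I would substitute $m_\sigma(\mcO_x)=s+t$, $m_\sigma(\mcO_{\bP^1}(-1))=|i+1|\,s+|i|\,t$ and $m_\sigma(\mcO_{\bP^1})=|i|\,s+|i-1|\,t$ into the coordinates $[1:X:Y]=[m_\sigma(\mcO_x):m_\sigma(\mcO_{\bP^1}(-1)):m_\sigma(\mcO_{\bP^1})]$, obtaining $X,Y$ homogeneous of degree $0$ in $(s,t)$. Splitting into the regimes $i\geq 1$, $i=0$, $i\leq -1$ (according to the signs of the coefficients) collapses the absolute values and produces one of the linear relations $Y=X-1$, $Y=-X+1$, $Y=X+1$, matching the three cases defining $\Delta_i$; writing $X$ in the form $(\text{integer})\pm s/(s+t)$ shows that, as $s:t$ runs over $\bR_{>0}:\bR_{>0}$, $X$ sweeps exactly the prescribed open interval of $\Delta_i$. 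This yields $\bP m(\mcH_i)\subset\Delta_i$. The reverse inclusion follows because any target point of $\Delta_i$ determines a unique ratio $s:t\in\bR_{>0}$, from which one builds a genuine $\sigma$ with heart $\mcA_{p,i,0}$ (take $p=2$, or $p=1$ with $\phi_\sigma(\mcO_{\bP^1}(i))>\phi_\sigma(\mcO_{\bP^1}(i-1))$) realizing those masses; the support property is automatic since only the two stable classes occur, so $C=\max(\|\ch(\mcO_{\bP^1}(i-1))\|/s,\|\ch(\mcO_{\bP^1}(i))\|/t)+1$ works.

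I expect the main obstacle to be the verification that the triangles of Lemma \ref{lem:generateP1} genuinely compute the Harder--Narasimhan filtrations rather than some coarser filtration: one must establish the strict inequality $\phi_\sigma(P)>\phi_\sigma(Q)$ \emph{uniformly in} $p$ and rule out any cancellation of opposite-parity factors in the mass sum. This is precisely where non-geometricity is indispensable (it is needed exactly in the boundary case $p=1$), and it is also the step that pins down the \emph{open} endpoints of each $\Delta_i$: the excluded integer points correspond to the degenerate limits $s\to 0$ or $t\to 0$, at which one stable object acquires zero mass and $\sigma$ leaves $\mcH_i$.
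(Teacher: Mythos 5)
Your proposal is correct and follows essentially the same route as the paper's proof: both use the exact triangles of Lemma \ref{lem:generateP1} to express $\mcO_x$, $\mcO_{\bP^1}(-1)$, $\mcO_{\bP^1}$ in terms of the two stable objects $\mcO_{\bP^1}(i-1)$, $\mcO_{\bP^1}(i)$, identify these triangles as Harder--Narasimhan (or Jordan--H\"older) filtrations so that masses add, and then let the ratio $s:t$ range over all positive values to sweep out $\Delta_i$. Your packaging of the coefficients via the $K$-theory basis is a tidy uniformization of the paper's case-by-case computation, and your only deviation is insisting on the strict phase inequality where the paper also allows the equal-phase (Jordan--H\"older) case, which changes nothing since the mass is additive either way.
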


\begin{proof}
Take $\overline{\sigma} \in \mcH_i$. 
We have 
\[\bP m(\overline{\sigma})=\left[1: \frac{m_\sigma(\mcO_{\bP^1}(-1))}{m_\sigma(\mcO_x)}:\frac{m_\sigma(\mcO_{\bP^1})}{m_\sigma(\mcO_x)} \right].\]
Put 
\[s:=m_\sigma(\mcO_{\bP^1}(i)), t:=m_\sigma(\mcO_{\bP^1}(i-1)),\]
and
\[X:=\frac{m_\sigma(\mcO_{\bP^1}(-1))}{m_\sigma(\mcO_x)}, Y:=\frac{m_\sigma(\mcO_{\bP^1})}{m_\sigma(\mcO_x)}.\]
We describe $X$ and $Y$ in terms of $s$ and $t$.
By Proposition \ref{prop:P1heart}, we have 
\[ s=|Z(\mcO_{\bP^1}(i))|, t=|Z(\mcO_{\bP^1}(i-1))|.\]
Note that considering the action of $\sqrt{-1}\mathbb{R} \subset \mathbb{C}$, $s$ and $t$ can be arbitrary positive real numbers.

First, we assume that $i=0$. 
By Lemma \ref{lem:generateP1}, we have an exact triangle
\begin{equation}\label{filtration}
\mcO_{\bP^1} \to \mcO_x \to \mcO_{\bP^1}(-1)[1]. 
\end{equation}
If $\mcO_x$ is not $\sigma$-semistable, the exact triangle (\ref{filtration}) is the Harder-Narasimhan filtration of $\mcO_x$ with respect to $\sigma$. If $\mcO_x$ is $\sigma$-semistable, the exact triangle (\ref{filtration}) is the Jordan-H\"older filtration of $\mcO_x$ with respect to $\sigma$. Therefore, we obtain
\begin{align*}
    m_\sigma(\mcO_x) &=  |Z(\mcO_{\bP^1})|+|Z(\mcO_{\bP^1}(-1)[1]| \\
    &=s+t
\end{align*}
and 
\[X=\frac{t}{s+t}, \ Y=\frac{s}{s+t}. \]
So we have 
\[\bP m(\mcH_{0})=\Delta_{0}. \]


Next, assume that $i \geq 1$. By Lemma \ref{lem:generateP1}, we have exact triangles
\begin{align*}
    &\mcO_{\bP^1}(i)[-1]^{\oplus i} \to \mcO_{\bP^1}(-1) \to \mcO_{\bP^1}(i-1)^{\oplus i+1}, \\
    &\mcO_{\bP^1}(i)[-1]^{\oplus i-1} \to \mcO_{\bP^1} \to \mcO_{\bP^1}(i-1)^{\oplus i}, \\
    &\mcO_{\bP^1}(i) \to \mcO_x \to \mcO_{\bP^1}(i-1)[1]. 
\end{align*}

As in the case of $i=0$, the above exact triangles are the Harder--Narasimhan or the Jordan--H\"older filtrations with respect to $\sigma$. Therefore, we obtain

\begin{align*}
    m_\sigma(\mcO_x) &=  |Z(\mcO_{\bP^1}(i))|+|Z(\mcO_{\bP^1}(i-1)[1]| \\
    &=s+t,\\
    m_\sigma(\mcO_{\bP^1}(-1)) &=  i|Z(\mcO_{\bP^1}(i)[-1])|+(i+1)|Z(\mcO_{\bP^1}(i)| \\
    &=is+(i+1)t,\\
    m_\sigma(\mcO_{\bP^1}) &=  (i-1)|Z(\mcO_{\bP^1}(i))[-1]|+i|Z(\mcO_{\bP^1}(i-1)| \\
    &=(i-1)s+it
\end{align*}
and 
\[X=\frac{is+(i+1)t}{s+t}, \ Y=\frac{(i-1)s+it}{s+t}. \]
Hence, we have 
\[\bP m(\mcH_i)=\Delta_i\]
for $i\geq 1$.

Finally, assume that $i \leq -1$.
As in the case of $i \geq 0$, we obtain
\begin{align*}
    m_\sigma(\mcO_x) &=  |Z(\mcO_{\bP^1}(i))|+|Z(\mcO_{\bP^1}(i-1)[1]| \\
    &=s+t,\\
 m_\sigma(\mcO_{\bP^1}(-1)) &=  -i|Z(\mcO_{\bP^1}(i))|+(-i-1)|Z(\mcO_{\bP^1}(i-1)[1]| \\
 &=-is+(-i-1)t,\\
m_\sigma(\mcO_{\bP^1}) &=  (-i+1)|Z(\mcO_{\bP^1}(i))[-1]|-i|Z(\mcO_{\bP^1}(i-1)[1]| \\
&=(-i+1)is-it
\end{align*}
and
\[X=\frac{-is+(-i-1)t}{s+t}, \ Y=\frac{(-i+1)s-it}{s+t}. \]
Hence, we have
\[m_\sigma(\mcH_i)=\Delta_i \]
for $i\leq-1$.
\end{proof}

Therefore, we see that 
the image $\bP m(\Stab(\bP^1)/\bC)$ gives a partial compactification of $\Geo(\bP^1)/\bC$. 

\begin{figure}[htb]
\begin{center}
\begin{tikzpicture}[scale=1]
\draw [->] (-1, 0) -- (3, 0) node[right]{$X$}; 
\draw[->] (0, -1) -- (0, 4) node[right]{$Y$}; 
\draw[dashed, domain=-1:2] plot(\x, -\x+1); 
\draw[thick, domain=0:1] plot(\x, -\x+1); 
\draw[dashed, domain=0:3] plot(\x, \x-1); 
\draw[thick, domain=1:2] plot(\x, \x-1); 
\draw[thick, domain=2:3] plot(\x, \x-1) node[right]{$Y=X-1$}; 
\draw[dashed, domain=-1:3] plot(\x, \x+1) node[right]{$Y=X+1$}; 
\draw[thick, domain=0:1] plot(\x, \x+1); 
\draw[thick, domain=1:2] plot(\x, \x+1); 
\draw[thick, domain=2:3] plot(\x, \x+1); 
\fill[lightgray] (1, 0)--(0, 1)--(3, 4)--(3, 2)--cycle;
\fill[fill=white] (0, 1) circle (0.1cm); 
\draw (0, 1) circle (0.11cm); 
\fill[fill=white] (1, 2) circle (0.1cm); 
\draw (1, 2) circle (0.11cm); 
\fill[fill=white] (1, 0) circle (0.1cm); 
\draw (1, 0) circle (0.11cm); 
\fill[fill=white] (2, 1) circle (0.1cm); 
\draw (2, 1) circle (0.11cm); 
\fill[fill=white] (2, 3) circle (0.1cm); 
\draw (2, 3) circle (0.11cm); 
\draw (-1.3, 2) node[above]{$Y=-X+1$}; 
\end{tikzpicture}
\end{center}
\caption{The image $\bP m(\Stab(\bP^1)/\bC)$ in $\bR\bP^2_{\geq 0}$.}
\end{figure}

\bibliographystyle{alpha}
\bibliography{maths}

\end{document}